\crefname{hypothesis}{Hypothesis}{Hypotheses}
\title{High-order BDF convolution quadrature for fractional evolution equations with hyper-singular source term
	\thanks{Submitted to the editors DATE.
		\funding{This work was supported by the Science Fund for Distinguished Young Scholars of Gansu Province under Grant No. 23JRRA1020 and the Fundamental Research Funds for the Central Universities under Grant No.
			lzujbky-2023-06.}}}
\author{Jiankang Shi\thanks{School of Mathematics and Statistics, Gansu Key Laboratory of Applied Mathematics and Complex Systems, Lanzhou University, Lanzhou 730000, P.R. China (\email{shijk20@lzu.edu.cn}, \email{chenmh@lzu.edu.cn}).}
	\and Minghua Chen \footnotemark[2]
	\and Jianxiong Cao\thanks{School of Sciences, Lanzhou University of Technology, Lanzhou 730000, P.R. China (\email{caojianxiong2007@126.com}).}}
\begin{document}
	
	\maketitle
	
	\begin{abstract}
		Anomalous diffusion in the presence or absence of an external force field is often modelled in terms of the fractional evolution equations, which can involve the hyper-singular source term.
		For this case, conventional time stepping methods may exhibit a severe order reduction. Although   a second-order  numerical algorithm is provided for the subdiffusion model with a simple hyper-singular source term $t^{\mu}$, $-2<\mu<-1$ in [arXiv:2207.08447],   the convergence analysis remain to be proved.
		To fill in these gaps, we present a simple and robust smoothing method for the hyper-singular source term, where the Hadamard finite-part integral is introduced. This method is based on the smoothing/ID$m$-BDF$k$ method proposed by the authors [Shi and Chen, SIAM J. Numer. Anal., to appear] for subdiffusion equation with a weakly singular source term.
		We prove that the $k$th-order convergence rate can be restored for the diffusion-wave case $\gamma \in (1,2)$ and sketch the proof for the subdiffusion case $\gamma \in (0,1)$, even if the source term is hyper-singular and the initial data is not compatible.
		Numerical experiments are provided to confirm the theoretical results.
	\end{abstract}
	
	\begin{keywords}
		fractional evolution equation, hyper-singular source term, Hadamard finite-part integral, smoothing/ID$m$-BDF$k$ method, error estimate
	\end{keywords}
	
	%\begin{MSCcodes}
	%68Q25, 68R10, 68U05
	%\end{MSCcodes}
	
	\section{Introduction}\label{Se:intro}
	We are interested in the convolution quadrature (CQ) generated by the $k$-step backward differentiation formulas (BDF$k$) for solving the fractional evolution equation with the hyper-singular source term, whose prototype equation is, for $1<\gamma<2$
	\begin{equation} \label{fee}
		\partial^{\gamma}_t (u(t)- \upsilon - tb) - A u(t) = g(t):= t^{\mu}\circ f(t), \quad -2<\mu<-1
	\end{equation}
	with the initial condition $u(0)=\upsilon$, $u'(0)=b$. The operator $A$ is the Laplacian $\Delta$ on a bounded, convex domain $\Omega \subset \mathbb{R}^d$ ($d=1,2,3$), with homogenous Dirichlet boundary condition.
	Here $\mathcal{D}(A) = H^{1}_{0}(\Omega) \cap H^{2}(\Omega)$, and $H^{1}_{0}(\Omega)$, $H^{2}(\Omega)$ denote the standard Sobolev spaces \cite{ThomeeGalerkin2006}.
	The symbol $\circ$ either the product or the convolution, and the Riemann-Liouville fractional derivative of order $\gamma$, is defined by \cite[p.\,62]{PodlubnyFractional1999}
	\begin{equation}\label{RL1.2}
		\partial^{\gamma}_t u(t)= \frac{1}{\Gamma(2-\gamma)}\frac{d^{2}}{dt^{2}} \int^{t}_{0} {(t-\tau)^{1-\gamma}u(\tau)} d\tau, \quad 1<\gamma<2.
	\end{equation}
	It makes sense to allow $\partial^{\gamma}_t u(t)$ to be hyper-singular at $t=0$ if $u$ is absolutely continuous, e.g.,
	\begin{equation*}
		\partial^{\gamma}_t 1=\frac{1}{\Gamma(1-\gamma)}t^{-\gamma}\rightarrow \infty \quad\rm{as}\quad t \rightarrow 0, \quad 1<\gamma<2.
	\end{equation*}
	This leads to the fractional evolution equations involving the hyper-singular source term, see \cite[Eq.(21)]{HeymansPhysical2006}, \cite[Eq.(4.2.57)]{KilbasTheory2006}, \cite[Eq.(4)]{OzdemirFractional2008} and \cite[Eq.(10)]{Comptegeneralized1997}.
	
	Problems of the model \eqref{fee} arise in many areas of the applied sciences, such as the transport of chemical contaminants, through water around rocks, and the dynamics of viscoelastic materials \cite{HeymansPhysical2006,KilbasTheory2006,MainardiFractional2022,PovstenkoFractional2015}.
	
	It is well-known that the analytical solution of \eqref{fee} has an initial layer at $t=0$ and $\partial u(x,t)/\partial t$ blows up as $t\rightarrow 0$ even if all the data of \eqref{fee} is sufficiently smooth \cite{JinCorrection2017,SakamotoInitial2011,StynesError2017}, which may deteriorate the convergence rate of the numerical methods.
	Nowadays, there are two popular time-stepping methods to recover the high-order convergence rate for the fractional evolution equation \eqref{fee} under the mild regularity of the source function. The first way is the variable time-stepping schemes (e.g., geometric meshes, graded meshes), which is powerful in capturing the singularities of the solution at $t=0$, see \cite{ChenTwo2021,KoptevaError2021,LiaoSharp2018,MustaphaTimestepping2015,Mustaphaapproximation2020,StynesError2017} for the subdiffusion case $\gamma \in (0,1)$ and \cite{MustaphaSuperconvergence2013,MustaphaWellposedness2014} for the diffusion-wave case $\gamma \in (1,2)$.
	The second way is convolution quadrature generated by BDF$k$ or Lagrange interpolation with degree $k$, see \cite{JinCorrection2017,ShiCorrection2020,ShiCorrection2022,Yananalysis2018} for $\gamma \in (0,1)$ and \cite{CuestaConvolution2006,JinTwo2016,JinCorrection2017,LubichNonsmooth1996} for $\gamma \in (1,2)$.
	It is important to note that, for the low regularity source term, the correction of high-order BDF$k$ schemes \cite{CuestaConvolution2006,JinTwo2016,JinCorrection2017,LubichNonsmooth1996,ShiCorrection2020,Yananalysis2018} may suffer form a severe order reduction. For example, it is reduces to the order $\mathcal{O}(\tau^{1+\mu})$ for the source term $g(t)=t^\mu$, $0<\mu<1$, see Lemma 3.2 in \cite{WangHighorder2020}.
	
	The study on the weakly singular source function for the subdiffusion model is scarce.
	In \cite{ZhouTwo2022}, a second-order method is presented for the singular source function $g(t)=t^{\mu}$, $\mu>-1$ by performing an integral-differential operator on both sides of the subdiffusion equation.
	An optimal error estimate of a high-order BDF convolution quadrature is provided in \cite{ShiHighorder2023} with $g(t):=t^{\mu}\circ f(t)$, $\mu>-1$, where the singular source term is regularized by $m$-fold integral-differential operators (ID$m$) and the equation is discretized by the BDF$k$, called smoothing method or ID$m$-BDF$k$ method.
	
	To the best of our knowledge, we are unaware of any other published work on the hyper-singular source function for the fractional evolution equations \eqref{fee} including the subdiffusion case.
	Although the numerical algorithm is provided for the subdiffusion case with $g(t)=t^{\mu}$, $-2<\mu<-1$, where the convergence analysis remain to be proved \cite{ChenModified2022}.
	
	To fill in this gap, we present a simple and robust smoothing method for the hyper-singular source term, where the Hadamard finite-part integral \cite{Diethelmanalysis2010} is introduced.
	This method is based on ID$m$-BDF$k$ method proposed by the authors \cite{ShiHighorder2023} for subdiffusion equation with a weakly singular source term.
	We prove that the $k$th-order convergence rate can be restored for the diffusion-wave case $\gamma \in (1,2)$ and sketch the proof for the subdiffusion case $\gamma \in (0,1)$, even if the source term is hyper-singular and the initial data is not compatible.
	Numerical experiments are provided to confirm the theoretical results.

	\section{Preliminaries: Numerical scheme and solution representation}\label{Se:corre}
	Let $V(t)=u(t)- \upsilon -tb$ with $V(0)=0$. Then \eqref{fee} can be recast (with $1<\gamma<2$)
	\begin{equation}\label{rfee}
		\partial^{\gamma}_{t} V(t) - A V(t)= A\upsilon + tAb + g(t), \quad 0<t\leq T.
	\end{equation}
	
	As is well known, the linear operator $A$ satisfies the resolvent estimate \cite{ThomeeGalerkin2006}
	\begin{equation*}\label{resolvent estimate}
		\left\| (z-A)^{-1} \right\| \leq c |z|^{-1} \quad \forall z\in \Sigma_{\theta}
	\end{equation*}
	for all $\theta\in (\pi/2, \pi)$. Here $\Sigma_{\theta}:=\{ z\in \mathbb{C}\backslash \{0\}:|\arg z| < \theta \}$ is a sector of the complex plane $\mathbb{C}$.
	Choose the angle $\theta$ such that $\pi/2 < \theta < \min{(\pi, \pi/\gamma)}$, and it holds
	\begin{equation}\label{fractional resolvent estimate}
		\big\| \left(z^{\gamma}-A\right)^{-1} \big\| \leq c |z|^{-\gamma} \quad \forall z\in \Sigma_{\theta}.
	\end{equation}
	Here and below $\left\|\cdot \right\|$ and $\left\|\cdot \right\|_{L^2(\Omega)}$, respectively, denote the operator norm \cite[p.\,91]{ThomeeGalerkin2006} and usual norm \cite[p.\,2]{ThomeeGalerkin2006} in the space $L^2(\Omega)$.
	
	\subsection{ID$m$-BDF$k$ method}
	Let us first introduce the Hadamard’s finite-part integral for $t^{-\beta}$
	\begin{equation}\label{Hadmint}
		t^{-\beta} \circledast 1:= \oint_{0}^{t} s^{-\beta} ds = \frac{t^{1-\beta}}{1-\beta} \quad {\rm with} \quad \beta >1.
	\end{equation}
	Estimates of finite-part integral \eqref{Hadmint} obtained by using the regularization formulas, where Hadamard suggested simply to ignore the unbounded contribution, see \cite[p.\,233]{Diethelmanalysis2010}.
	
	Performing the $m$-fold integral operator for $g(t)$, it yields \cite[p.\,193]{PodlubnyFractional1999}
	\begin{equation}\label{smoothing1}
		G(t)=J^{m}g(t)=\frac{t^{m-1}}{\Gamma(m)}\circledast g(t) :=\frac{1}{\Gamma(m)}\oint_0^t(t-\tau)^{m-1}g(\tau)d\tau.
	\end{equation}
	If the integrals in the right-hand side of \eqref{smoothing1} exist in the usual sense, then the value of the right-hand side gives the finite value of the integral, which reduces to the standard integral.
	Here the symbols $\oint$ (or $\int$) and $\circledast$ (or $\ast$) denote the Hadamard finite-part integral (or the standard integral) and Hadamard finite-part convolution (or the standard convolution), respectively.

	Then the fractional diffusion-wave equation \eqref{rfee} can be rewritten as (with $2\leq m \leq k+1$, $k\leq 6$)
	\begin{equation}\label{rrfee}
		\partial^{\gamma}_{t} V(t) - AV(t)=\partial^{m}_{t} \left(\frac{t^{m}}{\Gamma(m+1)} A\upsilon + \frac{t^{m+1}}{\Gamma(m+2)} Ab + G(t)\right), \quad 0<t\leq T.
	\end{equation}
	It is important to note that $G(0)=J^m g(t)|_{t=0}=0$, $m\geq2$, e.g., $J^m$ may map the hyper-singular point of $g$ to a zero point of $G$.
	
	Let $t_{n}=n \tau$, $n=0,1,\ldots,N$, be a uniform partition of the time interval $[0,T]$ with the step size $\tau=\frac{T}{N}$.
	Let $u^{n}$ be the approximation of $u(t_{n})$ and $g^{n}=g(t_{n})$ at $t=t_{n}$.
	The Riemann-Liouville fractional derivative $\partial^{\gamma}_{t}V(t_n)$ in \eqref{RL1.2} can be approximated by the convolution quadrature \cite{LubichDiscretized1986}
	\begin{equation}\label{2.1}
		\partial^{\gamma}_{\tau, k}V^{n}:=\frac{1}{\tau^{\gamma}}\sum^{n}_{j=0} \omega_{j}^{(\gamma, k)}V^{n-j}, \quad1\leq k\leq 6
	\end{equation}
	Here the convolution quadrature weights $\omega_{j}^{(\gamma, k)}$ are generated by the series expansion
	\begin{equation}\label{2.2}
		\delta^{\gamma}_{\tau, k}(\xi)=\frac{1}{\tau^{\gamma}}\sum^{\infty}_{j=0} \omega_{j}^{(\gamma, k)}\xi^{j} \quad {\rm with} \quad \delta_{\tau, k}(\xi)=\frac{1}{\tau} \sum_{j=1}^{k} \frac{1}{j} \left( 1-\xi \right)^{j},
	\end{equation}
	which can be computed by the recursion in \cite{ChenBackward2021} with the computational count $\mathcal{O}(N)$.
	
	Then ID$m$-BDF$k$ method for \eqref{rrfee} or \eqref{rfee} is designed by
	\begin{equation}\label{2.3}
		\partial^{\gamma}_{\tau, k} V^{n} - AV^{n}= \partial^{m}_{\tau, k} \left( \frac{t^{m}_{n}}{\Gamma(m+1)} A\upsilon + \frac{t_{n}^{m+1}}{\Gamma(m+2)} Ab + G^{n} \right), \quad2\leq m\leq k+1.
	\end{equation}
	\begin{remark}
		Intuitively, we require $\upsilon \in \mathcal{D}(A)$ in \eqref{2.3}.
		However, it still holds for the nonsmooth data $\upsilon\in L^2(\Omega)$, see Theorems \ref{theorem3.9}, \ref{theorem4.4}, \ref{theorema5.2} and \ref{theorem5.5}.
		Here, we mainly study on the time semi-discrete schemes \eqref{2.3}, since the spatial discretization is trivial.
		For example, we can choose $\upsilon_h=P_h\upsilon$ if $\upsilon\in L^2(\Omega)$, see \cite{ShiCorrection2022,ThomeeGalerkin2006,WangTwo2020}.
	\end{remark}
	
	\subsection{Solution representation}
	Applying Laplace transform in \eqref{rrfee}, we have
	\begin{equation*}
		\widehat{V}(z)= (z^{\gamma}-A)^{-1}\left( z^{-1} A\upsilon + z^{-2}Ab + z^{m} \widehat{G}(z) \right),\quad1<\gamma<2.
	\end{equation*}
	By inverse Laplace transform, the representation of continuous solution in \eqref{rrfee} is
	\begin{equation}\label{LT}
		V(t)= \frac{1}{2\pi i} \int_{\Gamma_{\theta, \kappa}} e^{zt} ( z^{\gamma}-A)^{-1}\left(z^{-1} A\upsilon + z^{-2}Ab + z^{m} \widehat{G}(z)\right)dz,
	\end{equation}
	where $\pi/2 < \theta < \min{(\pi, \pi/\gamma)}$, $\kappa>0$ and
	\begin{equation}\label{Gamma}
		\Gamma_{\theta, \kappa}=\{ z\in \mathbb{C}: |z|=\kappa, |\arg z|\leq \theta \} \cup \{ z\in \mathbb{C}: z=re^{\pm i\theta}, r\geq \kappa \}.
	\end{equation}

	Denote a sequence $(\kappa_n)_0^\infty$ and $\widetilde{\kappa}(\zeta)=\sum_{n=0}^{\infty}\kappa_n \zeta^n$ its generating power series.
	The representation of the discrete solution in \eqref{2.3} is obtained by the following.
	\begin{lemma}\label{Lemma2.1}
		Let $\delta^{\gamma}_{\tau, k}$ be given in \eqref{2.2} with $\gamma \in (1,2)$ and $G(t)=J^{m}g(t)$, $2\leq m\leq k+1$, $k\leq 6$ in \eqref{smoothing1}.
		Then the discrete solution of \eqref{2.3} is represented by
		\begin{equation*}
			\begin{split}
				V^{n}
				& = \frac{\tau}{2\pi i}\int_{\Gamma^{\tau}_{\theta,\kappa}} e^{zt_n} \left( \delta^{\gamma}_{\tau, k}(e^{-z\tau}) -A\right)^{-1} \delta^{m}_{\tau, k}(e^{-z\tau}) \\
				& \qquad \qquad \qquad \left( \frac{\rho_{m}(e^{-z\tau})}{\Gamma(m+1)} \tau^{m} A\upsilon + \frac{\rho_{m+1}(e^{-z\tau})}{\Gamma(m+2)} \tau^{m+1} Ab + \widetilde{G} (e^{-z\tau}) \right)dz
			\end{split}
		\end{equation*}
		with $\Gamma^{\tau}_{\theta, \kappa}=\{z\in \Gamma_{\theta, \kappa}: |\Im z|\leq \pi / \tau\}$ and $\rho_{m}(\xi)=\sum^{\infty}_{n=1}n^{m} \xi^{n} = \left(\xi\frac{d}{d\xi} \right)^{m} \frac{1}{1-\xi}$.
	\end{lemma}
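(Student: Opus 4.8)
The plan is to pass to generating power series, solve the resulting algebraic equation, and then turn the associated Cauchy integral into a contour integral over $\Gamma^\tau_{\theta,\kappa}$, which is the classical convolution-quadrature route of Lubich. First I would multiply the scheme \eqref{2.3} by $\xi^n$ and sum over $n\ge0$. Since $\partial^\gamma_{\tau,k}V^n=\tau^{-\gamma}\sum_{j=0}^n\omega_j^{(\gamma,k)}V^{n-j}$ is a causal discrete convolution, the Cauchy-product identity together with the defining series \eqref{2.2} shows that $\partial^\gamma_{\tau,k}$ acts on generating series as multiplication by $\delta^\gamma_{\tau,k}(\xi)$, and likewise $\partial^m_{\tau,k}$ as multiplication by $\delta^m_{\tau,k}(\xi)$. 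Writing $\widetilde V(\xi)=\sum_{n\ge0}V^n\xi^n$ and $\widetilde G(\xi)=\sum_{n\ge0}G^n\xi^n$, the left-hand side of \eqref{2.3} becomes $\big(\delta^\gamma_{\tau,k}(\xi)-A\big)\widetilde V(\xi)$.

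For the right-hand side I would use $\sum_{n\ge0}t_n^m\xi^n=\tau^m\sum_{n\ge1}n^m\xi^n=\tau^m\rho_m(\xi)$ (the $n=0$ term vanishes because $m\ge2$), and likewise for the $t_n^{m+1}$ term, so the generating series of the bracket in \eqref{2.3} is $\frac{\tau^m\rho_m(\xi)}{\Gamma(m+1)}A\upsilon+\frac{\tau^{m+1}\rho_{m+1}(\xi)}{\Gamma(m+2)}Ab+\widetilde G(\xi)$. Multiplying by $\delta^m_{\tau,k}(\xi)$ and inverting $\delta^\gamma_{\tau,k}(\xi)-A$ gives
\begin{equation*}
\widetilde V(\xi)=\big(\delta^\gamma_{\tau,k}(\xi)-A\big)^{-1}\delta^m_{\tau,k}(\xi)\left(\frac{\rho_m(\xi)}{\Gamma(m+1)}\tau^mA\upsilon+\frac{\rho_{m+1}(\xi)}{\Gamma(m+2)}\tau^{m+1}Ab+\widetilde G(\xi)\right).
\end{equation*}
To make this rigorous one checks that all series converge in a small disc $|\xi|\le\lambda<1$: $\widetilde G$ converges since $G=J^mg$ is continuous and at most polynomially growing with $G(0)=0$; testing \eqref{2.3} at $n=0$ forces $V^0=0$ and the implicit recursion then yields at most geometric growth of $V^n$; and $\big(\delta^\gamma_{\tau,k}(\xi)-A\big)^{-1}$ is well defined there because, by the standard sector properties of the BDF$k$ symbol for $k\le6$, $\delta_{\tau,k}(\xi)$ lies in a sector about the positive real axis of half-angle less than $\pi/\gamma$ for small $|\xi|$, so $\delta^\gamma_{\tau,k}(\xi)\in\Sigma_\theta$ and \eqref{fractional resolvent estimate} applies.

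The coefficient $V^n$ is then recovered by $V^n=\frac{1}{2\pi i}\oint_{|\xi|=\lambda}\xi^{-n-1}\widetilde V(\xi)\,d\xi$, and the last step is the substitution $\xi=e^{-z\tau}$ with $\lambda=e^{-\kappa\tau}$: this maps the circle $|\xi|=\lambda$ onto the vertical segment $\{\Re z=\kappa,\ |\Im z|\le\pi/\tau\}$, and since $\xi^{-n-1}d\xi=-\tau e^{zt_n}dz$ (the sign absorbed by the reversal of orientation) one obtains $V^n=\frac{\tau}{2\pi i}\int e^{zt_n}\widetilde V(e^{-z\tau})\,dz$ over that segment. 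Deforming the segment onto $\Gamma^\tau_{\theta,\kappa}$ is legitimate because the integrand is analytic in the region swept out: $\xi=e^{-z\tau}=1$ only when $z=0$, which is excluded since $|z|\ge\kappa>0$ on $\Gamma^\tau_{\theta,\kappa}$, so the rational factors $\rho_m(e^{-z\tau})$ and $\rho_{m+1}(e^{-z\tau})$ stay finite, and the horizontal pieces at $\Im z=\pm\pi/\tau$ cancel by the $2\pi i/\tau$-periodicity of $e^{-z\tau}$. This produces exactly the stated formula.

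The main obstacle is the analyticity and uniform-boundedness bookkeeping required for the contour deformation: one must verify that $\delta_{\tau,k}(e^{-z\tau})$ stays in the sector $\Sigma_\theta$, with modulus bounded above and below by constant multiples of $|z|$, all along the homotopy from the vertical segment to $\Gamma^\tau_{\theta,\kappa}$, so that $\big(\delta^\gamma_{\tau,k}(e^{-z\tau})-A\big)^{-1}$ exists and is controlled by \eqref{fractional resolvent estimate}, and that the truncated rays of $\Gamma^\tau_{\theta,\kappa}$ never reach the pole $\xi=1$ of $\rho_m,\rho_{m+1}$. These are the standard properties of the BDF$k$ generating polynomial for $k\le6$, which I would invoke rather than reprove.
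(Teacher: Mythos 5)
Your proposal is correct and follows essentially the same route as the paper's proof: generating power series turning the discrete convolutions into multiplication by $\delta^\gamma_{\tau,k}(\xi)$ and $\delta^m_{\tau,k}(\xi)$, the identities $\sum_{n\ge1}t_n^l\xi^n=\tau^l\rho_l(\xi)$ together with $V^0=G^0=0$, then Cauchy's integral formula, the substitution $\xi=e^{-z\tau}$, and a contour deformation onto $\Gamma^\tau_{\theta,\kappa}$. The only difference is that you spell out the convergence and analyticity bookkeeping that the paper leaves implicit, which is a harmless (indeed welcome) elaboration rather than a departure.
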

	\begin{proof}
		Multiplying $\xi^{n}$ on both sides of \eqref{2.3} and summing over $n$, it yields
		\begin{equation*}
			\begin{split}
				&\sum^{\infty}_{n=1} \partial^{\gamma}_{\tau, k} V^{n} \xi^{n} - \sum^{\infty}_{n=1} AV^{n} \xi^{n} = \sum^{\infty}_{n=1} \partial^{m}_{\tau, k} \left( \frac{t^{m}_{n}}{\Gamma(m+1)} A\upsilon + \frac{t_{n}^{m+1}}{\Gamma(m+2)} Ab + G^{n}\right) \xi^{n}.
			\end{split}
		\end{equation*}
		According to  \eqref{2.1}, \eqref{2.2} and $V^0=0$, there exists
		\begin{equation*}
			\begin{split}
				\sum^{\infty}_{n=1} \partial^{\gamma}_{\tau, k} V^{n} \xi^{n}
				=& \sum^{\infty}_{n=1} \frac{1}{\tau^{\gamma}}\sum^{n}_{j=0} \omega_{j}^{(\gamma, k)} V^{n-j} \xi^{n}
				= \frac{1}{\tau^{\gamma}} \sum^{\infty}_{j=0} \omega_{j}^{(\gamma, k)} \xi^{j} \sum^{\infty}_{n=0} V^{n} \xi^{n}
				= \delta^{\gamma}_{\tau, k} (\xi) \widetilde{V} (\xi).
			\end{split}
		\end{equation*}
		Using the identities $\rho_{m}(\xi)=\sum^{\infty}_{n=1}n^{m} \xi^{n}$, $m\geq 2$ and $G^0=G(0)=0$, we obtain
		\begin{equation*}
			\begin{split}
				& \sum^{\infty}_{n=1} \partial^{m}_{\tau, k} t^{m}_{n} A\upsilon \xi^{n} = \delta^{m}_{\tau, k}(\xi) \rho_{m}(\xi) \tau^{m} A\upsilon, \quad
				\sum^{\infty}_{n=1} \partial^{m}_{\tau, k} G^{n} \xi^{n} = \delta^{m}_{\tau, k}(\xi) \widetilde{G} (\xi),
			\end{split}
		\end{equation*}
		and
		\begin{equation*}
			\begin{split}
				\sum^{\infty}_{n=1} \partial^{m}_{\tau, k} t^{m+1}_{n} Ab \xi^{n} = \delta^{m}_{\tau, k}(\xi) \rho_{m+1}(\xi) \tau^{m+1} Ab.
			\end{split}
		\end{equation*}
		
		Combining the above equations, we have
		\begin{equation}\label{ads2.17}
			\widetilde{V}(\xi) =  \left( \delta^{\gamma}_{\tau, k}(\xi) -A\right)^{-1} \delta^{m}_{\tau, k}(\xi) \left( \frac{\rho_{m}(\xi)\tau^{m}}{\Gamma(m+1)} A\upsilon  + \frac{\rho_{m+1}(\xi)}{\Gamma(m+2)} \tau^{m+1} Ab + \widetilde{G} (\xi) \right).
		\end{equation}
		From Cauchy's integral formula, the change of variables $\xi=e^{-z\tau}$ and Cauchy's theorem, we obtain the following discrete solution of \eqref{2.3}
		\begin{equation}\label{DLT}
			\begin{split}
				V^{n}
				& = \frac{\tau}{2\pi i}\int_{\Gamma^{\tau}_{\theta,\kappa}} e^{zt_n} \left( \delta^{\gamma}_{\tau, k}(e^{-z\tau}) -A\right)^{-1} \delta^{m}_{\tau, k}(e^{-z\tau}) \\
				& \qquad \qquad \qquad \left( \frac{\rho_{m}(e^{-z\tau})}{\Gamma(m+1)} \tau^{m} A\upsilon + \frac{\rho_{m+1}(e^{-z\tau})}{\Gamma(m+2)} \tau^{m+1} Ab +\widetilde{G} (e^{-z\tau}) \right)dz
			\end{split}
		\end{equation}
		with $\Gamma^{\tau}_{\theta, \kappa}=\{z\in \Gamma_{\theta, \kappa}: |\Im z|\leq \pi / \tau\}$.
		The proof is completed.
	\end{proof}
	
	Note that BDF$k$ methods are $A(\vartheta_k)$-stable with
	$\vartheta_1=\vartheta_2=90^\circ$, $\vartheta_3\approx 86.03^\circ$, $\vartheta_4\approx 73.35^\circ$, $\vartheta_5\approx 51.84^\circ$ and $\vartheta_6 \approx 17.84^\circ$;
	see \cite{Akrivisenergy2021,HairerSolving2010}.
	Thus the numerical scheme \eqref{2.3} is unconditionally stable for any $\gamma < \gamma^{\ast}(k):=\pi/(\pi-\vartheta_{k})$.
	The critical value $\gamma^{\ast}(k)$ is $1.91$, $1.68$, $1.40$ and $1.11$ for $3, 4, 5, 6$, respectively.
	In contrast, for $\gamma \geq \gamma^{\ast}(k)$, it is only conditionally stable, see \cite[p. A3137]{JinCorrection2017}.
	
	\section{Error analysis: General source function $g(t)$}\label{Se:GSF}
	In this section, we first establish the detailed error analysis for the fractional diffusion-wave equation \eqref{rfee} under the mild regularity of the general source function $g(t)$.
	\subsection{A few technical lemmas}
	We introduce some lemmas, which play an important role in convergence analysis.
	\begin{lemma}\cite{JinCorrection2017}\label{Lemma 2.3}
		Let $\delta^{\gamma}_{\tau, k}$ be given in \eqref{2.2} with $\gamma \in (1,2)$.
		For any $\epsilon>0$, there exists $\theta_{\epsilon} \in (\pi/2, \pi)$ such that for any $ \theta \in (\pi/2, \theta_{\epsilon})$ there exist the positive constants $c$, $c_{1},c_{2}$ independent of $\tau$ such that
		\begin{equation*}
			\begin{split}
				& c_{1}|z|\le |\delta_{\tau, k}(e^{-z\tau})| \le c_{2}|z|, \quad |\delta_{\tau, k}(e^{-z\tau})-z|\le c \tau^{k}|z|^{k+1}, \\
				& |\delta^{\gamma}_{\tau, k}(e^{-z\tau})-z^{\gamma}|\le c \tau^{k}|z|^{k+\gamma}, \quad \delta_{\tau, k}(e^{-z\tau}) \in \Sigma_{\pi-\vartheta_k+\epsilon},
			\end{split}
		\end{equation*}
		and
		\begin{equation*}
			\begin{split}
				& \left\| \left(\delta^{\gamma}_{\tau, k}(e^{-z\tau}) -A\right)^{-1} \right\|
				\leq c |z|^{-\gamma}, \quad \forall z\in \Gamma^{\tau}_{\theta,\kappa}.
			\end{split}
		\end{equation*}
	\end{lemma}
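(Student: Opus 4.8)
The plan is to reduce the whole statement to scalar estimates for the symbol $F_{k}(\zeta):=\sum_{j=1}^{k}\frac1j(1-\zeta)^{j}$, because $\delta_{\tau, k}(e^{-z\tau})=\tau^{-1}F_{k}(e^{-z\tau})$ and $\delta^{\gamma}_{\tau, k}(e^{-z\tau})=\tau^{-\gamma}F_{k}(e^{-z\tau})^{\gamma}$, and then to feed the resulting scalar bounds into the sectorial resolvent estimate \eqref{fractional resolvent estimate} for $A$; this is in essence Lubich's convolution-quadrature stability theory \cite{LubichDiscretized1986} adapted as in \cite{JinCorrection2017}. The starting observation is that on $\Gamma^{\tau}_{\theta,\kappa}$ the truncation $|\Im z|\le\pi/\tau$ confines $w:=z\tau$ to the bounded region $\{\,\kappa\tau\le|w|,\ |\arg w|\le\theta,\ |\Im w|\le\pi\,\}$, so in particular $|w|\le\pi/\sin\theta$. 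I would therefore split $\Gamma^{\tau}_{\theta,\kappa}$ into the part with $|w|\le\delta_{0}$ (near the circular arc, where $\zeta=e^{-z\tau}$ is close to $1$) and the part with $\delta_{0}\le|w|$ (the truncated rays), $\delta_{0}$ a small fixed number.

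On the near-arc part, $|1-\zeta|=|1-e^{-w}|\le 2|w|<1$ for $\delta_{0}$ small, so the series $-\log\zeta=\sum_{j\ge1}\frac1j(1-\zeta)^{j}$ converges and, since $|\Im w|\le\pi$, equals $w$ on the principal branch. Hence $F_{k}(\zeta)-w=-\sum_{j>k}\frac1j(1-\zeta)^{j}$, so $|F_{k}(\zeta)-w|\le c|1-\zeta|^{k+1}\le c'|w|^{k+1}$, which is exactly $|\delta_{\tau, k}(e^{-z\tau})-z|\le c\tau^{k}|z|^{k+1}$; this also gives $F_{k}(\zeta)=w(1+r)$ with $|r|\le c|w|^{k}$, whence $|F_{k}(\zeta)|\asymp|w|$ and $F_{k}(\zeta)\in\Sigma_{\pi-\vartheta_{k}+\epsilon}$ once $\theta$ is taken close enough to $\pi/2$ and $\delta_{0}$ small (using $\pi-\vartheta_{k}\ge\pi/2$). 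For the fractional estimate, $F_{k}(\zeta)^{\gamma}-w^{\gamma}=w^{\gamma}\bigl((1+r)^{\gamma}-1\bigr)=w^{\gamma}O(|r|)$, legitimate because $1+r$ stays near $1$, and this yields $|\delta^{\gamma}_{\tau, k}(e^{-z\tau})-z^{\gamma}|\le c\tau^{k}|z|^{k+\gamma}$.

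On the ray part the power series is unavailable, so I would argue by compactness: as $\tau$ ranges over $(0,\tau_{0}]$, $\zeta=e^{-z\tau}$ stays in a fixed compact set $K$ with $1\notin K$, and for $\theta$ close to $\pi/2$ this $K$ is a thin annular sector just outside the unit circle with a neighbourhood of $\zeta=1$ deleted. Here one invokes the classical facts that BDF$k$, $1\le k\le6$, is zero-stable with $\zeta=1$ the only root of its first characteristic polynomial on the unit circle and is $A(\vartheta_{k})$-stable \cite{HairerSolving2010}; the former gives $F_{k}(\zeta)\ne0$ on $\{|\zeta|\le1\}\setminus\{1\}$, and the latter gives $F_{k}(\{|\zeta|\le1\})\subseteq\overline{\Sigma_{\pi-\vartheta_{k}}}$. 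Since $F_{k}$ is a polynomial, by continuity both statements persist on the thin annulus $K$ (with the sector enlarged to $\Sigma_{\pi-\vartheta_{k}+\epsilon}$) once $\theta$ is close enough to $\pi/2$, so $|F_{k}|$ is bounded above and away from zero on $K$ while $|w|$ is comparable to $\delta_{0}$ up to $\pi/\sin\theta$; hence the two-sided bound $c_{1}|z|\le|\delta_{\tau, k}(e^{-z\tau})|\le c_{2}|z|$ and the sector membership extend to the ray part, and the accuracy estimates are trivial there because every quantity is comparable to $|z|$.

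It remains to obtain the resolvent bound: combining the two parts, $\delta^{\gamma}_{\tau, k}(e^{-z\tau})=\tau^{-\gamma}F_{k}(e^{-z\tau})^{\gamma}\in\Sigma_{\gamma(\pi-\vartheta_{k}+\epsilon)}$ with modulus comparable to $|z|^{\gamma}$; in the stable range $\gamma<\gamma^{\ast}(k)$ one may take $\epsilon$ small enough that $\gamma(\pi-\vartheta_{k}+\epsilon)<\pi$, so $\delta^{\gamma}_{\tau, k}(e^{-z\tau})$ stays in a fixed subsector of the resolvent set of $A$, and \eqref{fractional resolvent estimate} (applied with this argument in place of $z^{\gamma}$) gives $\|(\delta^{\gamma}_{\tau, k}(e^{-z\tau})-A)^{-1}\|\le c|\delta^{\gamma}_{\tau, k}(e^{-z\tau})|^{-1}\le c|z|^{-\gamma}$. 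I expect the genuine obstacle to be the ray part: controlling $\delta_{\tau, k}(e^{-z\tau})$ uniformly in $\tau$ where $1-\zeta$ is not small forces reliance on the structural zero- and $A(\vartheta_{k})$-stability of BDF$k$ for $k\le6$, together with a careful, $\gamma$-dependent choice of the contour angle $\theta$ near $\pi/2$, whereas the near-arc expansion and the final resolvent step are routine once that is secured.
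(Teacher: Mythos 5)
The paper does not actually prove this lemma---it is quoted from \cite{JinCorrection2017}---and your argument is essentially the standard proof given there and in Lubich's convolution-quadrature theory \cite{LubichDiscretized1986}: write $\delta_{\tau,k}(e^{-z\tau})=\tau^{-1}F_k(e^{-z\tau})$, expand via the logarithm series near $\zeta=1$, and treat the truncated rays by compactness combined with the zero- and $A(\vartheta_k)$-stability of BDF$k$. The one point worth flagging is that your final resolvent step genuinely requires $\gamma(\pi-\vartheta_k+\epsilon)<\pi$, i.e.\ $\gamma<\gamma^{\ast}(k)$; the lemma as stated suppresses this restriction, but the paper concedes it in the discussion following Lemma \ref{Lemma2.1}, so your proof is faithful to what is actually true.
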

	
	\begin{lemma}\cite{ShiHighorder2023}\label{Lemma nn3.3}
		Let $\rho_{l}(\xi)=\sum^{\infty}_{n=1}n^{l} \xi^{n}$ with $l=0, 1, 2, \ldots, 2k$, $k\leq 6$.
		Then there exists a positive constant $c$ independent of $\tau$ such that
		\begin{equation*}
			\left| \frac{\rho_{l}(e^{-z\tau})}{\Gamma(l+1)} \tau^{l+1} - \frac{1}{z^{l+1}} \right| \leq
			\left\{ \begin{split}
				& c \tau^{l+1}, \qquad l=0,1,3,\ldots,2k-1,\\
				& c \tau^{l+2}|z| , \quad l=2,4,\ldots,2k.
			\end{split}\right.
		\end{equation*}
	\end{lemma}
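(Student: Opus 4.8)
The plan is to prove the estimate by treating the three values of $l$-parity separately, since the generating function $\rho_l(\xi)=(\xi\,d/d\xi)^l\,(1-\xi)^{-1}$ has a pole of order $l+1$ at $\xi=1$, and the leading singular part should exactly reproduce $z^{-(l+1)}$ after the substitution $\xi=e^{-z\tau}$, with the next term in the expansion governing the order of the remainder. Concretely, I would start from the closed form $\rho_l(e^{-z\tau})=\sum_{j=0}^{l} S(l,j)\,j!\,e^{-jz\tau}(1-e^{-z\tau})^{-(j+1)}$ (Stirling numbers of the second kind), or equivalently use the known Eulerian-polynomial representation $\rho_l(\xi)=A_l(\xi)/(1-\xi)^{l+1}$ where $A_l$ is the degree-$(l-1)$ Eulerian polynomial with $A_l(1)=l!$. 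Either way, the key analytic input is the expansion of $1-e^{-z\tau}$ near $z\tau=0$: writing $w=z\tau$, one has $1-e^{-w}=w\,\phi(w)$ with $\phi(w)=1-\tfrac{w}{2}+\tfrac{w^2}{6}-\cdots$ analytic and nonzero in a neighbourhood of $0$, so $(1-e^{-w})^{-(l+1)}=w^{-(l+1)}\phi(w)^{-(l+1)}$.

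The first main step is therefore to establish, uniformly for $z\in\Gamma^\tau_{\theta,\kappa}$, an expansion of the form
\begin{equation*}
\frac{\rho_l(e^{-z\tau})}{\Gamma(l+1)}\,\tau^{l+1} = \frac{1}{z^{l+1}}\Bigl(1 + a_1\, z\tau + a_2\,(z\tau)^2 + \cdots \Bigr),
\end{equation*}
valid in the region $|z\tau|\le c_0$ (which covers most of $\Gamma^\tau_{\theta,\kappa}$), where the coefficients $a_i$ are absolute constants determined by the Taylor coefficients of $\phi(w)^{-(l+1)}$ and of $A_l(e^{-w})/A_l(1)$. The crucial observation — and the reason the estimate splits by parity — is that the first-order coefficient $a_1$ vanishes precisely when $l$ is even. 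Indeed, both $A_l(e^{-w})/l!$ and $\phi(w)^{-(l+1)}$ have first-order Taylor coefficient (in $w$) equal to some explicit rational number, and their sum is $0$ iff $l$ is even; this is the symmetry $A_l(\xi)=\xi^{l-1}A_l(1/\xi)$ of the Eulerian polynomials combined with the functional equation $\phi(-w)=e^{w}\phi(w)$. I would verify this cancellation directly for the relevant finite list $l=0,1,\dots,12$ (since $k\le 6$) rather than proving it in general, which keeps the argument elementary. Granting the cancellation, for odd $l$ the remainder is $O(z\tau)\cdot z^{-(l+1)}=O(\tau^{l+1})$ on the part of the contour with $|z|\le c_0/\tau$, while for even $l$ the remainder is $O((z\tau)^2)\cdot z^{-(l+1)}=O(\tau^{l+2}|z|)$; this is exactly the claimed bound.

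The second step handles the outer part of the contour, $|z\tau|\ge c_0$, i.e.\ the two rays with $\kappa\tau\le |z\tau|$ but $|z\tau|$ bounded below by a fixed constant. There $\rho_l(e^{-z\tau})$ is bounded: since $\Re(z\tau)\ge |z\tau|\cos\theta$ is bounded below away from $0$ only if... — more carefully, on the rays $z=re^{\pm i\theta}$ with $\theta$ close to $\pi/2$ one must control $|1-e^{-z\tau}|$ from below, which follows from the geometric constraint $|\Im z|\le \pi/\tau$ built into $\Gamma^\tau_{\theta,\kappa}$ (so $e^{-z\tau}$ stays away from $1$), giving $|\rho_l(e^{-z\tau})|\le c$; then $|\rho_l(e^{-z\tau})\tau^{l+1}/\Gamma(l+1)|\le c\tau^{l+1}$ and separately $|z^{-(l+1)}|=|z|^{-(l+1)}\le c(\kappa)\,\tau^{l+1}$ when $|z|\le c_0/\tau$, or $|z|^{-(l+1)}\le \tau^{l+1}c_0^{-(l+1)}$ when $|z|\ge c_0/\tau$ — in all subcases the two terms are each $O(\tau^{l+1})$, hence their difference is $O(\tau^{l+1})$, which is stronger than (or equal to) what is claimed. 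The main obstacle I anticipate is bookkeeping in the first step: verifying the parity cancellation of the linear term cleanly and tracking that the constants $a_i$ are genuinely $\tau$-independent and the error terms are uniform in $z$ across the transition region $|z\tau|\sim c_0$. Once the expansion and the elementary lower bound on $|1-e^{-z\tau}|$ are in hand, the rest is a routine case check over the finitely many admissible $l$.
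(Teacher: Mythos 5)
The paper does not prove this lemma at all --- it is imported verbatim from \cite{ShiHighorder2023} --- so your proposal can only be judged on its own terms. Its architecture is right (Laurent expansion of $\rho_l(e^{-w})$ at $w=z\tau=0$ on the inner part of the contour, plus the crude two-sided bound on the outer part $|z\tau|\ge c_0$, where your reasoning is correct), but the central quantitative claim of the first step is wrong, and the bounds you deduce from it do not follow. Writing $\frac{\rho_l(e^{-w})}{l!}\,w^{l+1}=1+\sum_{j\ge1}a_jw^j$, what the lemma requires is that $a_1=\dots=a_l=0$ for \emph{every} $l$ --- so that the remainder is $O\bigl((z\tau)^{l+1}\bigr)\cdot z^{-(l+1)}=O(\tau^{l+1})$ --- and that in addition $a_{l+1}=0$ when $l$ is even and $\ge2$. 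Your claim that only $a_1$ is relevant and that it ``vanishes precisely when $l$ is even'' is false in both directions: $a_1=-\tfrac12\neq0$ for $l=0$ (which the lemma nevertheless places in the first group), while $a_1=0$ for $l=1$ (indeed $a_1=0$ for all $l\ge1$). Moreover the step ``$O(z\tau)\cdot z^{-(l+1)}=O(\tau^{l+1})$'' is an invalid order count: $\tau|z|^{-l}$ is only $O(\tau)$ near $|z|=\kappa$, not $O(\tau^{l+1})$; the even case has the same miscount.

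The cancellation you actually need is awkward to see from the Eulerian-polynomial form (there the vanishing of $a_1,\dots,a_l$ is a nontrivial identity between the Taylor coefficients of $A_l(e^{-w})$ and $\phi(w)^{-(l+1)}$), but it is immediate from $\rho_l(e^{-w})=(-1)^l\frac{d^l}{dw^l}\frac{1}{e^w-1}$ together with the Bernoulli expansion $\frac{1}{e^w-1}=\frac1w+h(w)$, where $h(w)=-\frac12+\sum_{j\ge1}\frac{B_{2j}}{(2j)!}w^{2j-1}$ is analytic in $|w|<2\pi$. This gives the exact identity $\frac{\rho_l(e^{-z\tau})}{l!}\tau^{l+1}-z^{-(l+1)}=\frac{(-1)^l\tau^{l+1}}{l!}h^{(l)}(z\tau)$, with $h^{(l)}(0)=B_{l+1}/(l+1)$; the lemma's parity split is exactly the vanishing of the odd Bernoulli numbers $B_{l+1}$ for even $l\ge2$. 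Uniform boundedness of $h^{(l)}(w)$, and of $h^{(l)}(w)/w$ in the even case, on $\{|\Im w|\le\pi,\ |w|\le\pi/\sin\theta\}$ (which avoids the poles $2\pi ik$, $k\neq0$) then yields the two stated bounds. With that replacement your outline closes; as written, it does not.
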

	
	\begin{lemma}\cite{ShiHighorder2023}\label{Lemma 3.4}
		Let $\delta_{\tau, k}(\xi)$ be given in \eqref{2.2} and $\rho_{l}(\xi)=\sum^{\infty}_{n=1}n^{l} \xi^{n}$ with $l=0, 1, \ldots, k+m-1$, $2\leq m\leq k+1$, $k\leq 6$.
		Then there exists a positive constant $c$ independent of $\tau$ such that
		\begin{equation*}
			\left|\delta^{m}_{\tau, k}(e^{-z\tau}) \frac{\rho_{l}(e^{-z\tau})}{\Gamma(l+1)} \tau^{l+1} -\frac{z^m}{z^{l+1}} \right|
			\leq \left\{ \begin{split}
				& c \tau^{l+1} \left| z \right|^{m} + c \tau^{k}|z|^{k+m-l-1}, \qquad l=0,1,3,\ldots,\\
				& c \tau^{l+2} \left| z \right|^{m+1} + c \tau^{k}|z|^{k+m-l-1}, \quad l=2,4,\ldots.
			\end{split}\right.
		\end{equation*}
	\end{lemma}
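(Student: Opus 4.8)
The plan is to split the quantity into two pieces that isolate, respectively, the error coming from $\rho_{l}$ and the error coming from $\delta_{\tau,k}$, and then to estimate each piece using Lemmas~\ref{Lemma 2.3} and~\ref{Lemma nn3.3}. Concretely, I would write, with $z\in\Gamma^{\tau}_{\theta,\kappa}$,
\begin{equation*}
	\delta^{m}_{\tau, k}(e^{-z\tau}) \frac{\rho_{l}(e^{-z\tau})}{\Gamma(l+1)} \tau^{l+1} -\frac{z^m}{z^{l+1}}
	= \delta^{m}_{\tau, k}(e^{-z\tau})\left(\frac{\rho_{l}(e^{-z\tau})}{\Gamma(l+1)} \tau^{l+1} - \frac{1}{z^{l+1}}\right)
	+ \frac{1}{z^{l+1}}\left(\delta^{m}_{\tau, k}(e^{-z\tau}) - z^{m}\right),
\end{equation*}
so that the triangle inequality reduces the statement to bounding these two terms separately. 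This additive splitting is the only real idea; the rest is bookkeeping.

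For the first term, recall that $\delta^{m}_{\tau,k}=(\delta_{\tau,k})^{m}$ since $m$ is an integer, so the upper bound $|\delta_{\tau,k}(e^{-z\tau})|\le c_{2}|z|$ from Lemma~\ref{Lemma 2.3} gives $|\delta^{m}_{\tau,k}(e^{-z\tau})|\le c|z|^{m}$. Multiplying by the bound of Lemma~\ref{Lemma nn3.3}, namely $c\tau^{l+1}$ when $l=0$ or $l$ is odd and $c\tau^{l+2}|z|$ when $l$ is even, yields $c\tau^{l+1}|z|^{m}$ and $c\tau^{l+2}|z|^{m+1}$ respectively, which is exactly the first summand in each branch of the claimed estimate.

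For the second term, I would use the factorization $\delta^{m}-z^{m}=(\delta-z)\sum_{j=0}^{m-1}\delta^{m-1-j}z^{j}$ with $\delta=\delta_{\tau,k}(e^{-z\tau})$. Each monomial $\delta^{m-1-j}z^{j}$ is $\mathcal{O}(|z|^{m-1})$, again by $|\delta_{\tau,k}(e^{-z\tau})|\le c_{2}|z|$, while $|\delta-z|\le c\tau^{k}|z|^{k+1}$ by Lemma~\ref{Lemma 2.3}; hence $|\delta^{m}_{\tau,k}(e^{-z\tau})-z^{m}|\le c\tau^{k}|z|^{k+m}$, and dividing by $|z|^{l+1}$ produces $c\tau^{k}|z|^{k+m-l-1}$, matching the second summand in both branches. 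Summing the two contributions completes the argument.

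I do not expect a serious obstacle: the result is essentially a bilinear perturbation estimate that follows mechanically once Lemmas~\ref{Lemma 2.3} and~\ref{Lemma nn3.3} are available. The two points that require a little care are (i) checking that all constants are uniform in $z$ over the truncated contour $\Gamma^{\tau}_{\theta,\kappa}$, which is precisely the regime in which Lemma~\ref{Lemma 2.3} is stated, and (ii) tracking the parity split of Lemma~\ref{Lemma nn3.3} correctly through the multiplication by $\delta^{m}_{\tau,k}$, since this is what produces the distinction between the $|z|^{m}$ and the $|z|^{m+1}$ factors in the final bound.
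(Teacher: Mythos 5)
Your proof is correct. Note that the paper itself gives no proof of this lemma --- it is quoted verbatim from \cite{ShiHighorder2023} --- so there is no in-paper argument to compare against; your additive splitting into $\delta^{m}_{\tau,k}\bigl(\rho_{l}\tau^{l+1}/\Gamma(l+1)-z^{-l-1}\bigr)$ plus $z^{-l-1}\bigl(\delta^{m}_{\tau,k}-z^{m}\bigr)$, combined with the bounds $|\delta_{\tau,k}(e^{-z\tau})|\le c_{2}|z|$ and $|\delta_{\tau,k}(e^{-z\tau})-z|\le c\tau^{k}|z|^{k+1}$ from Lemma~\ref{Lemma 2.3} and the parity-split estimate of Lemma~\ref{Lemma nn3.3}, is the standard route and yields exactly the claimed bounds (the range $l\le k+m-1\le 2k$ keeps you within the hypotheses of Lemma~\ref{Lemma nn3.3}).
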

	
	\begin{lemma}\label{Lemma 3.11}
		Let $\delta^{\gamma}_{\tau, k}$ be given in \eqref{2.2} with $\gamma \in (1,2)$ and $\rho_{l}(\xi)=\sum^{\infty}_{n=1}n^{l} \xi^{n}$ with $l=0, 1, \ldots, k+m-1$, $2\leq m\leq k+1$, $k\leq 6$.
		Then there exists a positive constant $c$ independent of $\tau$ such that
		\begin{equation*}
			\left\|K(z) \right\|\leq
			\left\{ \begin{split}
				& c \tau^{l+1} \left| z \right|^{m-\gamma} + c \tau^{k}|z|^{k+m-l-1-\gamma}, \qquad l=0,1,3,\ldots,2k-1,\\
				& c \tau^{l+2} \left| z \right|^{m+1-\gamma} + c \tau^{k}|z|^{k+m-l-1-\gamma}, \quad l=2,4,\ldots,2k
			\end{split}\right.
		\end{equation*}
		with
		\begin{equation*}
			\begin{split}
				K(z)= \left( \delta^{\gamma}_{\tau, k}(e^{-z\tau}) -A\right)^{-1} \delta^{m}_{\tau, k}(e^{-z\tau}) \frac{\rho_{l}(e^{-z\tau})}{\Gamma(l+1)} \tau^{l+1} - (z^{\gamma}-A)^{-1} \frac{z^m}{z^{l+1}}.
			\end{split}
		\end{equation*}
	\end{lemma}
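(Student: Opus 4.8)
The plan is to decompose $K(z)$ into two pieces that isolate, respectively, the error coming from replacing the discrete symbol $\delta^\gamma_{\tau,k}(e^{-z\tau})$ by $z^\gamma$ inside the resolvent, and the error already controlled by Lemma~\ref{Lemma 3.4}. Concretely, I would write
\begin{equation*}
	\begin{split}
		K(z) &= \Bigl[\bigl(\delta^{\gamma}_{\tau, k}(e^{-z\tau}) -A\bigr)^{-1} - (z^{\gamma}-A)^{-1}\Bigr]\,\delta^{m}_{\tau, k}(e^{-z\tau}) \frac{\rho_{l}(e^{-z\tau})}{\Gamma(l+1)} \tau^{l+1} \\
		&\quad + (z^{\gamma}-A)^{-1}\left( \delta^{m}_{\tau, k}(e^{-z\tau}) \frac{\rho_{l}(e^{-z\tau})}{\Gamma(l+1)} \tau^{l+1} -\frac{z^m}{z^{l+1}} \right) =: K_1(z) + K_2(z).
	\end{split}
\end{equation*}
The second term $K_2(z)$ is immediate: by the fractional resolvent estimate \eqref{fractional resolvent estimate}, $\|(z^\gamma-A)^{-1}\|\le c|z|^{-\gamma}$, and multiplying the two bounds of Lemma~\ref{Lemma 3.4} by $|z|^{-\gamma}$ yields exactly the two cases claimed (the $l$ even/odd dichotomy is inherited verbatim). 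So the whole argument reduces to estimating $K_1(z)$.

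For $K_1(z)$ I would use the second resolvent identity
\begin{equation*}
	\bigl(\delta^{\gamma}_{\tau, k}(e^{-z\tau}) -A\bigr)^{-1} - (z^{\gamma}-A)^{-1} = -\,(z^{\gamma}-A)^{-1}\bigl(\delta^{\gamma}_{\tau, k}(e^{-z\tau}) - z^{\gamma}\bigr)\bigl(\delta^{\gamma}_{\tau, k}(e^{-z\tau}) -A\bigr)^{-1},
\end{equation*}
and then combine four ingredients, all available from Lemmas~\ref{Lemma 2.3} and~\ref{Lemma nn3.3}: (i) $\|(z^\gamma-A)^{-1}\|\le c|z|^{-\gamma}$; (ii) $|\delta^{\gamma}_{\tau, k}(e^{-z\tau}) - z^{\gamma}|\le c\tau^k|z|^{k+\gamma}$; (iii) $\|(\delta^{\gamma}_{\tau, k}(e^{-z\tau})-A)^{-1}\|\le c|z|^{-\gamma}$ on $\Gamma^\tau_{\theta,\kappa}$; and (iv) the bound on $\delta^m_{\tau,k}(e^{-z\tau})\,\rho_l(e^{-z\tau})\tau^{l+1}/\Gamma(l+1)$. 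For (iv) I would not re-derive it but simply note that it follows from Lemma~\ref{Lemma nn3.3} together with $|\delta_{\tau,k}(e^{-z\tau})|\le c_2|z|$ (so $|\delta^m_{\tau,k}|\le c|z|^m$): since $\rho_l(e^{-z\tau})\tau^{l+1}/\Gamma(l+1)$ is $z^{-(l+1)}$ up to an error of size $c\tau^{l+1}$ (or $c\tau^{l+2}|z|$), the product is bounded by $c|z|^{m-l-1}$ in all cases for $|z|\le$ const$/\tau$ on the contour. Multiplying (i)$\times$(ii)$\times$(iii)$\times$(iv) gives $\|K_1(z)\|\le c|z|^{-\gamma}\cdot \tau^k|z|^{k+\gamma}\cdot|z|^{-\gamma}\cdot|z|^{m-l-1} = c\tau^k|z|^{k+m-l-1-\gamma}$, which is precisely the \emph{second} summand on the right-hand side of the claimed estimate in both cases. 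Adding the $K_2$ contribution then produces the stated bound.

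The main obstacle, and the only place that needs care, is making sure the exponents of $\tau$ and $|z|$ are tracked consistently through the even/odd split of $l$ — in particular, confirming that the $K_1$ term never produces a worse power than the $c\tau^k|z|^{k+m-l-1-\gamma}$ already present, so that it may be absorbed, and that the leading ($\tau^{l+1}$ or $\tau^{l+2}$) terms come purely from $K_2$ via Lemma~\ref{Lemma 3.4}. One should also check that on $\Gamma^\tau_{\theta,\kappa}$ we have $|z|\tau \le c$, which is what legitimizes bounding $|\delta^m_{\tau,k}\rho_l\tau^{l+1}|$ by $c|z|^{m-l-1}$ and, together with $\tau^k|z|^k\le c$, keeps the $K_1$ estimate subordinate. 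No genuinely new analytic input is required beyond the three cited lemmas and the resolvent identity.
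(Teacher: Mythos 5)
Your proposal is correct and follows essentially the same route the paper intends: it reduces the claim to Lemma \ref{Lemma 3.4} for the symbol error (which supplies the leading $\tau^{l+1}|z|^{m-\gamma}$, resp.\ $\tau^{l+2}|z|^{m+1-\gamma}$, terms) and to the second resolvent identity with the bounds of Lemma \ref{Lemma 2.3} for the resolvent error, whose contribution $c\tau^{k}|z|^{k+m-l-1-\gamma}$ is absorbed into the second summand; the use of $|z|\tau\leq c$ on $\Gamma^{\tau}_{\theta,\kappa}$ to control $\delta^{m}_{\tau,k}\rho_{l}\tau^{l+1}$ by $c|z|^{m-l-1}$ is exactly the point that makes this work. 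The only cosmetic difference is the ordering of the add-and-subtract (the paper's template pairs the resolvent difference with $z^{m}/z^{l+1}$ rather than with the discrete symbol), which is immaterial.
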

	\begin{proof}
		Using Lemmas \ref{Lemma 2.3}-\ref{Lemma 3.4}, the similar arguments can be performed as \cite[Lemma 3.4]{ShiHighorder2023}, we omit it here.
	\end{proof}
	
	\begin{lemma}\label{addLemma 3.6}
		Let $\delta^{\gamma}_{\tau, k}$ be given in \eqref{2.2} with $1<\gamma <2$ and $\rho_{m}(\xi)=\sum^{\infty}_{n=1}n^{m} \xi^{n}$.
		Then there exists a positive constant $c$ independent of $\tau$ such that
		\begin{equation*}
			\left\|\mathcal{K}_\upsilon(z) \right\|\leq \left\{ \begin{split}
				& c \tau^{m+2} \left| z \right|^{m+1} + c \tau^{k}|z|^{k+m-l-1}, \quad m=2,4,6\\
				& c \tau^{m+1} \left| z \right|^{m} + c \tau^{k}|z|^{k+m-l-1}, \qquad m=3,5,7
			\end{split}\right.
		\end{equation*}
		with
		\begin{equation}\label{addeq3.1}
			\mathcal{K}_\upsilon(z)= \left(\delta^{\gamma}_{\tau, k}(e^{-z\tau}) -A\right)^{-1} \delta^{m}_{\tau, k}(e^{-z\tau}) \frac{\rho_{m}(e^{-z\tau})}{\Gamma(m+1)} \tau^{m+1} A - (z^{\gamma}-A)^{-1} z^{-1} A.
		\end{equation}
	\end{lemma}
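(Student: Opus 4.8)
The plan is to reduce the unbounded operator $A$ appearing in $\mathcal{K}_\upsilon(z)$ to the bounded resolvents already controlled in Lemmas \ref{Lemma 2.3}--\ref{Lemma 3.11}, and then replay the telescoping perturbation argument of \cite[Lemma 3.6]{ShiHighorder2023}. The key preliminary step is the algebraic identity
\begin{equation*}
(\lambda I-A)^{-1}A=\lambda(\lambda I-A)^{-1}-I,
\end{equation*}
valid for $\lambda\in\{\delta^{\gamma}_{\tau,k}(e^{-z\tau}),\,z^{\gamma}\}$ since both numbers belong to the resolvent set of $A$ by \eqref{fractional resolvent estimate} and Lemma \ref{Lemma 2.3}. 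Abbreviating the scalar $P_{\tau}(z):=\delta^{m}_{\tau,k}(e^{-z\tau})\frac{\rho_{m}(e^{-z\tau})}{\Gamma(m+1)}\tau^{m+1}$ and using that $A$, its resolvents and these scalars all commute, \eqref{addeq3.1} becomes
\begin{equation*}
\mathcal{K}_\upsilon(z)=\left[P_{\tau}(z)\,\delta^{\gamma}_{\tau,k}(e^{-z\tau})\left(\delta^{\gamma}_{\tau,k}(e^{-z\tau})-A\right)^{-1}-z^{\gamma-1}(z^{\gamma}-A)^{-1}\right]-\left(P_{\tau}(z)-z^{-1}\right)I .
\end{equation*}

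For the last term, $\left\|\left(P_{\tau}(z)-z^{-1}\right)I\right\|$ is exactly the quantity estimated by Lemma \ref{Lemma 3.4} with $l=m$, namely $\le c\tau^{m+1}|z|^{m}+c\tau^{k}|z|^{k-1}$ for $m$ odd and $\le c\tau^{m+2}|z|^{m+1}+c\tau^{k}|z|^{k-1}$ for $m$ even, which already has the asserted structure once the second exponent is read as $k-1$ (that is, $k+m-l-1$ at $l=m$).

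For the bracketed resolvent term I would insert the perturbative splittings $\delta^{\gamma}_{\tau,k}(e^{-z\tau})=z^{\gamma}+E_{\gamma}(z)$, $P_{\tau}(z)=z^{-1}+E_{P}(z)$ and $\left(\delta^{\gamma}_{\tau,k}(e^{-z\tau})-A\right)^{-1}=(z^{\gamma}-A)^{-1}+E_{R}(z)$, where $|E_{\gamma}(z)|\le c\tau^{k}|z|^{k+\gamma}$ by Lemma \ref{Lemma 2.3}, $E_{P}(z)$ is bounded as in the previous paragraph, and the resolvent difference obeys $\left\|E_{R}(z)\right\|=\left\|\left(\delta^{\gamma}_{\tau,k}(e^{-z\tau})-A\right)^{-1}\left(z^{\gamma}-\delta^{\gamma}_{\tau,k}(e^{-z\tau})\right)(z^{\gamma}-A)^{-1}\right\|\le c\tau^{k}|z|^{k-\gamma}$, again by Lemma \ref{Lemma 2.3} and \eqref{fractional resolvent estimate}. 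Expanding the product $\left(z^{-1}+E_{P}\right)\left(z^{\gamma}+E_{\gamma}\right)\left((z^{\gamma}-A)^{-1}+E_{R}\right)$, the leading contribution $z^{\gamma-1}(z^{\gamma}-A)^{-1}$ cancels against the subtracted term, while every surviving summand carries at least one of $E_{\gamma},E_{P},E_{R}$. Using $|z|^{\gamma}\|(z^{\gamma}-A)^{-1}\|\le c$, the bounds just recorded, and the fact that $\tau|z|\le c$ on the truncated contour $\Gamma^{\tau}_{\theta,\kappa}$ (so that the summands containing two or three perturbation factors are of strictly higher order and can be absorbed), this bracket is bounded by $c\,|E_{P}(z)|+c\tau^{k}|z|^{k-1}$. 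Adding the two estimates and invoking the odd/even dichotomy of Lemma \ref{Lemma 3.4} for $|E_{P}(z)|$ yields the claim.

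I expect the only genuinely delicate point to be the handling of the unbounded $A$: one must not bound the composition by a product of operator norms, but instead eliminate $A$ through $(\lambda I-A)^{-1}A=\lambda(\lambda I-A)^{-1}-I$ before performing the perturbation expansion. Once this is done everything reduces to bookkeeping with Lemmas \ref{Lemma 2.3}--\ref{Lemma 3.4}, exactly as in \cite[Lemma 3.6]{ShiHighorder2023}; in fact $\mathcal{K}_\upsilon(z)$ can equivalently be obtained from the operator $K(z)$ of Lemma \ref{Lemma 3.11} at $l=m$ via the same identity, $\mathcal{K}_\upsilon(z)=z^{\gamma}K(z)-K(z)(z^{\gamma}-A)$, after which one concludes from $\|K(z)\|$ and the short identity $K(z)(z^{\gamma}-A)=\left(\delta^{\gamma}_{\tau,k}(e^{-z\tau})-A\right)^{-1}\left(z^{\gamma}-\delta^{\gamma}_{\tau,k}(e^{-z\tau})\right)P_{\tau}(z)+\left(P_{\tau}(z)-z^{-1}\right)I$.
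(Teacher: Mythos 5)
Your proof is correct and follows essentially the same route as the paper: eliminate the unbounded $A$ via $(\lambda-A)^{-1}A=\lambda(\lambda-A)^{-1}-I$, then split into perturbation terms controlled by Lemmas \ref{Lemma 2.3} and \ref{Lemma 3.4} (your $E_P$, $E_\gamma$, $E_R$ and the scalar remainder are exactly the four parts $J_1,\dots,J_4$ used in the paper's proof of the sibling Lemma \ref{addaddLemma 3.6}). You also correctly read the exponent $k+m-l-1$ in the statement as $k-1$, i.e.\ at $l=m$.
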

	
	\begin{proof}
		From Lemmas \ref{Lemma 2.3}-\ref{Lemma 3.4} and Lemma 3.5 in \cite{ShiHighorder2023}, the desired result is obtained.
	\end{proof}
	
	\begin{lemma}\label{addaddLemma 3.6}
		Let $\delta^{\gamma}_{\tau, k}$ be given in \eqref{2.2} with $1<\gamma <2$ , $\rho_{m+1}(\xi)=\sum^{\infty}_{n=1}n^{m+1} \xi^{n}$.
		Then there exists a positive constant $c$ independent of $\tau$ such that
		\begin{equation*}
			\left\|\mathcal{K}_b(z) \right\|\leq \left\{ \begin{split}
				& c \tau^{m+2} \left| z \right|^{m} + c \tau^{k}|z|^{k-2}, \qquad m=2,4,6,\\
				& c \tau^{m+3} \left| z \right|^{m+1} + c \tau^{k}|z|^{k-2}, \quad m=3,5,7
			\end{split}
			\right.
		\end{equation*}
		with
		\begin{equation}\label{addaddeq3.1}
			\mathcal{K}_b(z)= \left(\delta^{\gamma}_{\tau, k}(e^{-z\tau}) -A\right)^{-1} \delta^{m}_{\tau, k}(e^{-z\tau}) \frac{\rho_{m+1}(e^{-z\tau})}{\Gamma(m+2)} \tau^{m+2} A - (z^{\gamma}-A)^{-1} z^{-2} A.
		\end{equation}
	\end{lemma}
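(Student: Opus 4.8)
The plan is to follow the template used for Lemma~\ref{addLemma 3.6} (and Lemma~3.5 of~\cite{ShiHighorder2023}), but with the index $l=m$ replaced by $l=m+1$ and the factor $z^{-1}$ replaced by $z^{-2}$; this index shift is exactly what interchanges the two parity cases in the statement. Abbreviate $\delta:=\delta^{\gamma}_{\tau,k}(e^{-z\tau})$ and set the scalars $P:=\delta^{m}_{\tau,k}(e^{-z\tau})\,\rho_{m+1}(e^{-z\tau})\,\tau^{m+2}/\Gamma(m+2)$ and $Q:=z^{m}/z^{m+2}=z^{-2}$, so that $\mathcal{K}_b(z)=(\delta-A)^{-1}PA-(z^{\gamma}-A)^{-1}QA$. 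First I would telescope by inserting and subtracting $(z^{\gamma}-A)^{-1}PA$, which gives
\[
\mathcal{K}_b(z)=\bigl[(\delta-A)^{-1}-(z^{\gamma}-A)^{-1}\bigr]PA+(z^{\gamma}-A)^{-1}(P-Q)A .
\]

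Next I would estimate the two summands separately, the crucial point in both being that the unbounded operator $A$ must be kept inside the bounded combination $(z^{\gamma}-A)^{-1}A=-I+z^{\gamma}(z^{\gamma}-A)^{-1}$, whose norm is $\le c$ by the resolvent estimate~\eqref{fractional resolvent estimate}. For the second summand this immediately yields $\|(z^{\gamma}-A)^{-1}(P-Q)A\|\le c\,|P-Q|$, and $|P-Q|$ is precisely the quantity bounded by Lemma~\ref{Lemma 3.4} with $l=m+1$ (note $k+m-l-1=k-2$): this produces $c\tau^{m+2}|z|^{m}+c\tau^{k}|z|^{k-2}$ for $m$ even and $c\tau^{m+3}|z|^{m+1}+c\tau^{k}|z|^{k-2}$ for $m$ odd. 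For the first summand I would use the resolvent identity $(\delta-A)^{-1}-(z^{\gamma}-A)^{-1}=(\delta-A)^{-1}(z^{\gamma}-\delta)(z^{\gamma}-A)^{-1}$, again routing $A$ through $(z^{\gamma}-A)^{-1}A$; combining $\|(\delta-A)^{-1}\|\le c|z|^{-\gamma}$ and $|z^{\gamma}-\delta|\le c\tau^{k}|z|^{k+\gamma}$ from Lemma~\ref{Lemma 2.3} with the bound $|P|\le c|z|^{-2}$ (obtained from the triangle inequality, Lemma~\ref{Lemma 3.4}, and $\tau|z|\le c$ on $\Gamma^{\tau}_{\theta,\kappa}$) gives $c\,|z|^{-2}\cdot\tau^{k}|z|^{k+\gamma}\cdot|z|^{-\gamma}=c\tau^{k}|z|^{k-2}$, which is subsumed by the stated bounds. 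Adding the two contributions establishes the lemma.

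The only genuinely delicate step is this bookkeeping for $A$: one must arrange the decomposition so that $A$ never appears alone but always inside $(z^{\gamma}-A)^{-1}A$ (or equivalently $(\delta-A)^{-1}\delta=I+A(\delta-A)^{-1}$), and one must use that $\tau|z|$ is uniformly bounded on the truncated contour $\Gamma^{\tau}_{\theta,\kappa}$ defined in~\eqref{Gamma} so that the auxiliary terms $\tau^{k}|z|^{k-2}$ and $\tau^{m+2}|z|^{m}$ are indeed controlled by $|z|^{-2}$ wherever that is needed; both facts are already available from Lemma~\ref{Lemma 2.3} and the definition of $\Gamma^{\tau}_{\theta,\kappa}$. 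All remaining manipulations are the routine algebra indicated in the proof of Lemma~\ref{addLemma 3.6}, so I would conclude by noting that the argument is parallel to~\cite[Lemma~3.5]{ShiHighorder2023} under the index shift $l=m\mapsto l=m+1$.
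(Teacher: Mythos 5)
Your proposal is correct and follows essentially the same route as the paper: both arguments absorb the unbounded operator $A$ into a bounded resolvent combination, reduce the main term to the scalar estimate of Lemma~\ref{Lemma 3.4} with $l=m+1$ (whence the parity swap and the exponent $k+m-l-1=k-2$), and control the remaining pieces via the consistency and resolvent bounds of Lemma~\ref{Lemma 2.3}. The only cosmetic difference is the order of operations—the paper first rewrites $A=\delta^{\gamma}_{\tau,k}-(\delta^{\gamma}_{\tau,k}-A)$ and $A=z^{\gamma}-(z^{\gamma}-A)$ and then splits into four terms $J_1,\dots,J_4$, whereas you telescope first and then need the extra (correctly justified) observation $|P|\le c|z|^{-2}$ on $\Gamma^{\tau}_{\theta,\kappa}$—but the resulting estimates coincide.
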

	
	\begin{proof}
		We can check
		$(z^{\gamma}-A)^{-1} z^{-2} A = - z^{-2} + ( z^{\gamma}-A)^{-1} z^{\gamma-2}$ and
		\begin{equation*}
			\begin{split}
				& \left(\delta^{\gamma}_{\tau, k}(e^{-z\tau})-A\right)^{-1} \delta^{m}_{\tau, k}(e^{-z\tau}) \frac{\rho_{m+1}(e^{-z\tau})}{\Gamma(m+2)} \tau^{m+2} A\\
				& =\left( \delta^{\gamma}_{\tau, k}(e^{-z\tau}) -A\right)^{-1} \delta^{\gamma}_{\tau, k}(e^{-z\tau}) \delta^{m}_{\tau, k}(e^{-z\tau}) \frac{\rho_{m+1}(e^{-z\tau})}{\Gamma(m+2)} \tau^{m+2} \\
				&\quad - \delta^{m}_{\tau, k}(e^{-z\tau})\frac{\rho_{m+1}(e^{-z\tau})}{\Gamma(m+2)} \tau^{m+2}.
			\end{split}
		\end{equation*}
		Then we split $\mathcal{K}_b(z)$ into the following four parts
		\begin{equation*}
			\mathcal{K}_b(z) = J_{1} + J_{2} + J_{3} + J_{4}
		\end{equation*}
		with
		\begin{equation*}
			\begin{split}
				J_{1} =& \left(  \delta^{\gamma}_{\tau, k}(e^{-z\tau}) -A\right)^{-1}  \delta^{\gamma}_{\tau, k}(e^{-z\tau})
				\left( \delta^{m}_{\tau, k}(e^{-z\tau})\frac{\rho_{m+1}(e^{-z\tau})}{\Gamma(m+2)} \tau^{m+2} - z^{-2} \right), \\
				J_{2} =& \left( \delta^{\gamma}_{\tau, k}(e^{-z\tau}) -A\right)^{-1} \left( \delta^{\gamma}_{\tau, k}(e^{-z\tau}) - z^{\gamma} \right) z^{-2}, \\
				J_{3} =& \left( \left( \delta^{\gamma}_{\tau, k}(e^{-z\tau}) -A\right)^{-1} - ( z^{\gamma}-A)^{-1} \right)  z^{\gamma-2}, \\
				J_{4} =& z^{-2} - \delta^{m}_{\tau, k}(e^{-z\tau})\frac{\rho_{m+1}(e^{-z\tau})}{\Gamma(m+2)} \tau^{m+2}.
			\end{split}
		\end{equation*}
		According to Lemmas \ref{Lemma 2.3} and \ref{Lemma 3.4}, we estimate
		\begin{equation*}
			\left\|J_1 \right\|\leq c\left\|J_4 \right\|
			\leq \left\{\begin{split}
				& c \tau^{m+2} \left| z \right|^{m} + c \tau^{k}|z|^{k-2}, \qquad m=2,4,6,\\
				& c \tau^{m+3} \left| z \right|^{m+1} + c \tau^{k}|z|^{k-2}, \quad m=3,5,7,
			\end{split}\right.
		\end{equation*}
		and $$\left\| J_{2} \right\| \leq c |z|^{-\gamma} \tau^{k}|z|^{k+\gamma} |z|^{-2} \leq c \tau^{k} |z|^{k-2}.$$
		On the other hand, from Lemma \ref{Lemma 2.3} and
		\begin{equation*}
			\left( \delta^{\gamma}_{\tau, k}(e^{-z\tau}) -A\right)^{-1} - (z^{\gamma}-A)^{-1}
			=\left( z^{\gamma} - \delta^{\gamma}_{\tau, k}(e^{-z\tau}) \right) \left( \delta^{\gamma}_{\tau, k}(e^{-z\tau}) -A\right)^{-1} (z^{\gamma}-A)^{-1},
		\end{equation*}
		it yields
		\begin{equation*}
			\left\| J_{3} \right\| \leq c \tau^{k} |z|^{-2\gamma}  |z|^{k + 2\gamma -2} \leq c \tau^{k}  |z|^{k -2}.
		\end{equation*}
		The proof is completed.
	\end{proof}
	
	\subsection{Convergence analysis for general source function $g(t)$}
	We first provided the detailed convergence analysis for \eqref{rfee} under the mild regularity $g(t)$.
	Let  $G(t)=J^{m}g(t)$ be defined in \eqref{smoothing1}.
	By Taylor series expansion with the remainder term in integral form \cite[Eq. (3.3)]{ShiHighorder2023}, it yields
	\begin{equation}\label{gs3.3}
		G(t) = \sum_{l=0}^{k+m-2} \frac{t^l}{\Gamma(l+1)} g^{(l-m)}(0) + \frac{t^{k+m-2}}{\Gamma(k+m-1)} \ast g^{(k-1)}(t)
	\end{equation}
	with $g^{(-i)}(0)=J^{i}g(0)=0$, $i \geq 1$.
	Then we have  the following results.
	\begin{lemma}\label{lemma3.9}
		Let $V(t_{n})$ and $V^{n}$ be the solutions of \eqref{rrfee} and \eqref{2.3}, respectively.
		Let $\upsilon=b=0$ and %$g(t):=\frac{t^{l}}{\Gamma(l+1)} g^{(l)}(0)$ with $l=0,1, \ldots, k-2$
		 $G(t):=\frac{t^{l}}{\Gamma(l+1)} g^{(l-m)}(0)$ with $l=0,1, \ldots, k+m-2$, $2\leq m \leq k+1$, $k\leq 6$.
		Then the following error estimate holds for any $t_n>0$
			\begin{equation*}
				\begin{split}
					& \left\| V(t_{n}) - V^{n} \right\|_{L^2(\Omega)} \\
					& \leq \left\{ \begin{split}
						& \left( c\tau^{l+1}t_{n}^{\gamma-m-1} + c\tau^{k} t_{n}^{\gamma+l-k-m} \right) \left\| g^{(l-m)}(0) \right\|_{L^2(\Omega)}, \quad l=0,1,3,\ldots,\\
						& \left( c\tau^{l+2}t_{n}^{\gamma-m-2} + c\tau^{k} t_{n}^{\gamma+l-k-m} \right) \left\| g^{(l-m)}(0) \right\|_{L^2(\Omega)}, \quad l=2,4,6,\ldots.
					\end{split}\right.
				\end{split}
			\end{equation*}
	\end{lemma}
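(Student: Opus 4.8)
The plan is to represent both $V(t_n)$ and $V^n$ as Bromwich‑type contour integrals, subtract them, and bound the resulting kernel by Lemma~\ref{Lemma 3.11}. With $\upsilon=b=0$ and $G(t)=\frac{t^l}{\Gamma(l+1)}g^{(l-m)}(0)$ one has $\widehat G(z)=z^{-(l+1)}g^{(l-m)}(0)$ and generating power series $\widetilde G(e^{-z\tau})=\frac{\tau^l\rho_l(e^{-z\tau})}{\Gamma(l+1)}g^{(l-m)}(0)$, so \eqref{LT} and Lemma~\ref{Lemma2.1} (with the prefactor $\tau$ absorbed into $\tau^{l+1}$) give
\begin{equation*}
V(t_n)=\frac{1}{2\pi i}\int_{\Gamma_{\theta,\kappa}}e^{zt_n}(z^\gamma-A)^{-1}\frac{z^m}{z^{l+1}}\,dz\;g^{(l-m)}(0),
\end{equation*}
\begin{equation*}
V^n=\frac{1}{2\pi i}\int_{\Gamma^\tau_{\theta,\kappa}}e^{zt_n}\big(\delta^\gamma_{\tau,k}(e^{-z\tau})-A\big)^{-1}\delta^m_{\tau,k}(e^{-z\tau})\frac{\rho_l(e^{-z\tau})\tau^{l+1}}{\Gamma(l+1)}\,dz\;g^{(l-m)}(0).
\end{equation*}
Since the constant $g^{(l-m)}(0)\in L^2(\Omega)$ factors out, it suffices to estimate the operator‑norm difference of the two kernels. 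I would take $\kappa=1/t_n$ (admissible for $t_n>0$, as $(z^\gamma-A)^{-1}z^{m-l-1}$ is analytic on $\Sigma_\theta$ by \eqref{fractional resolvent estimate}), so that $|e^{zt_n}|\le e$ on the arc $\{|z|=\kappa\}$.

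Splitting $\Gamma_{\theta,\kappa}=\Gamma^\tau_{\theta,\kappa}\cup(\Gamma_{\theta,\kappa}\setminus\Gamma^\tau_{\theta,\kappa})$ and observing that on $\Gamma^\tau_{\theta,\kappa}$ the two kernels differ by precisely the operator $K(z)$ of Lemma~\ref{Lemma 3.11}, I would write $V(t_n)-V^n=I_1+I_2$ with
\begin{equation*}
I_1=\frac{1}{2\pi i}\int_{\Gamma_{\theta,\kappa}\setminus\Gamma^\tau_{\theta,\kappa}}e^{zt_n}(z^\gamma-A)^{-1}\frac{z^m}{z^{l+1}}\,dz\;g^{(l-m)}(0),\qquad
I_2=-\frac{1}{2\pi i}\int_{\Gamma^\tau_{\theta,\kappa}}e^{zt_n}K(z)\,dz\;g^{(l-m)}(0).
\end{equation*}
For $I_2$ I would insert the bound on $\|K(z)\|$ from Lemma~\ref{Lemma 3.11} and split $\Gamma^\tau_{\theta,\kappa}$ into the two rays $z=re^{\pm i\theta}$, $\kappa\le r\le\pi/(\tau\sin\theta)$, and the arc $|z|=\kappa=1/t_n$. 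On the rays $|e^{zt_n}|=e^{-r|\cos\theta|t_n}$ (as $\theta>\pi/2$); since every exponent occurring ($m-\gamma$, $m+1-\gamma$, $k+m-l-1-\gamma$) exceeds $-1$ for $2\le m\le k+1$, $l\le k+m-2$, $\gamma<2$, one may enlarge the radial integral to $\int_0^\infty$ and use $\int_0^\infty e^{-ar}r^s\,dr=\Gamma(s+1)a^{-s-1}$: the term $c\tau^{l+1}|z|^{m-\gamma}$ produces $c\tau^{l+1}t_n^{\gamma-m-1}$ (resp.\ $c\tau^{l+2}|z|^{m+1-\gamma}$ produces $c\tau^{l+2}t_n^{\gamma-m-2}$ in the even case), and $c\tau^{k}|z|^{k+m-l-1-\gamma}$ produces $c\tau^{k}t_n^{\gamma+l-k-m}$. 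On the arc, $|z|=1/t_n$, length $\le 2\theta/t_n$, and $|e^{zt_n}|\le e$ reproduce the same three orders. Hence $I_2$ obeys the asserted bound.

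For the tail term $I_1$ I would use $\|(z^\gamma-A)^{-1}z^{m-l-1}\|\le c|z|^{m-l-1-\gamma}$ together with $e^{-x}\le C_\beta x^{-\beta}$ ($x>0$, any $\beta>0$). On $\Gamma_{\theta,\kappa}\setminus\Gamma^\tau_{\theta,\kappa}$, $z=re^{\pm i\theta}$ with $r\ge r_0:=\pi/(\tau\sin\theta)$, so
\begin{equation*}
\|I_1\|_{L^2(\Omega)}\le c\,\|g^{(l-m)}(0)\|_{L^2(\Omega)}\int_{r_0}^{\infty}e^{-r|\cos\theta|t_n}r^{m-l-1-\gamma}\,dr\le c\,t_n^{-\beta}\tau^{\beta-m+l+\gamma}\,\|g^{(l-m)}(0)\|_{L^2(\Omega)}
\end{equation*}
for every $\beta>m-l-\gamma$. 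Choosing $\beta=m+1-\gamma$ when $l=0,1,3,\ldots$ (resp.\ $\beta=m+2-\gamma$ when $l=2,4,6,\ldots$) gives $\|I_1\|_{L^2(\Omega)}\le c\tau^{l+1}t_n^{\gamma-m-1}\|g^{(l-m)}(0)\|_{L^2(\Omega)}$ (resp.\ $\le c\tau^{l+2}t_n^{\gamma-m-2}\|g^{(l-m)}(0)\|_{L^2(\Omega)}$), which is the leading term of the claim; adding this to the estimate for $I_2$ completes the proof.

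All the substantive work is already packed into Lemma~\ref{Lemma 3.11} (which rests on Lemmas~\ref{Lemma 2.3}--\ref{Lemma 3.4}); once that is granted, the remainder is the choice $\kappa=1/t_n$ and routine radial integration. The one point needing care is verifying that the truncation contribution $I_1$ is no larger than the leading $\tau^{l+1}$ (resp.\ $\tau^{l+2}$) term — which is exactly what the above choice of $\beta$ secures — together with the elementary check that every radial exponent exceeds $-1$ (using $l\le k+m-2$ and $\gamma<2$).
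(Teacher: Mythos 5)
Your argument is correct and is essentially the paper's own route: the paper simply defers to \cite[Lemma 3.6]{ShiHighorder2023}, whose proof is exactly this contour representation of $V(t_n)$ and $V^n$, the identification of the kernel difference on $\Gamma^{\tau}_{\theta,\kappa}$ with $K(z)$ from Lemma~\ref{Lemma 3.11}, the choice $\kappa=1/t_n$, and routine radial integration. The only (immaterial) variation is that you absorb the truncation tail into the leading $\tau^{l+1}t_n^{\gamma-m-1}$ term via $e^{-x}\le C_\beta x^{-\beta}$, whereas the standard writeup bounds it by $\tau^{k}t_n^{\gamma+l-k-m}$ using $|z|^{-k}\le c\tau^{k}$ on $\Gamma_{\theta,\kappa}\setminus\Gamma^{\tau}_{\theta,\kappa}$; both land inside the stated estimate.
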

	\begin{proof}
		This lemma can be proved in the same way as shown Lemma 3.6 of \cite{ShiHighorder2023}.
	\end{proof}
	
	\begin{lemma}\label{lemma3.10} %%%with $2\leq m \leq k+1$, $k\leq 6$
		Let $V(t_{n})$ and $V^{n}$ be the solutions of \eqref{rrfee} and \eqref{2.3}, respectively.
		Let $\upsilon=b=0$, %$g(t):=\frac{t^{k-2}}{\Gamma(k-1)} \ast g^{(k-1)}(t)$
	 $G(t):=\frac{t^{k+m-2}}{\Gamma(k+m-1)} \ast g^{(k-1)}(t)$  and $\int_{0}^{t} (t-s)^{\gamma-2} \| g^{(k-1)}(s) \|_{L^2(\Omega)} ds <\infty$ with $1<\gamma <2$. Then the following error estimate holds for any $t_n>0$
		\begin{equation*}
			\left\|V(t_{n})-V^{n}\right\|_{L^2(\Omega)}\leq c\tau^{k} \int_{0}^{t_{n}} (t_n-s)^{\gamma-2} \left\| g^{(k-1)}(s) \right\|_{L^2(\Omega)}ds.
		\end{equation*}
	\end{lemma}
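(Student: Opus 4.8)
\emph{Proof plan.} Since $\upsilon=b=0$, only the $G$-term survives in \eqref{LT} and in Lemma~\ref{Lemma2.1}, so that
\begin{equation*}
	V(t_{n})-V^{n}=\frac{1}{2\pi i}\int_{\Gamma_{\theta,\kappa}}e^{zt_{n}}(z^{\gamma}-A)^{-1}z^{m}\widehat{G}(z)\,dz-\frac{\tau}{2\pi i}\int_{\Gamma^{\tau}_{\theta,\kappa}}e^{zt_{n}}\big(\delta^{\gamma}_{\tau,k}(e^{-z\tau})-A\big)^{-1}\delta^{m}_{\tau,k}(e^{-z\tau})\widetilde{G}(e^{-z\tau})\,dz .
\end{equation*}
Now $G=\frac{t^{k+m-2}}{\Gamma(k+m-1)}\ast g^{(k-1)}=J^{k+m-1}g^{(k-1)}$ is precisely the integral-remainder term of \eqref{gs3.3}, so $\widehat{G}(z)=z^{-(k+m-1)}\widehat{g^{(k-1)}}(z)$ and hence $z^{m}\widehat{G}(z)=\frac{z^{m}}{z^{l+1}}\widehat{g^{(k-1)}}(z)=z^{1-k}\widehat{g^{(k-1)}}(z)$ with $l:=k+m-2$; the constraint $2\le m\le k+1$ gives $k\le l\le 2k-1$, so that Lemma~\ref{Lemma 3.11} is applicable with this value of $l$. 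Writing $\widehat{g^{(k-1)}}(z)=\int_{0}^{\infty}e^{-zs}g^{(k-1)}(s)\,ds$ (interpreted in the weak sense permitted by $\int_{0}^{t}(t-s)^{\gamma-2}\|g^{(k-1)}(s)\|_{L^{2}(\Omega)}\,ds<\infty$, as in \cite{ShiHighorder2023}) and interchanging the order of integration, the continuous part takes the convolution form $V(t_{n})=\int_{0}^{t_{n}}E(t_{n}-s)g^{(k-1)}(s)\,ds$ with $\widehat{E}(z)=(z^{\gamma}-A)^{-1}z^{1-k}$.

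The next step is to bring $V^{n}$ into the matching form, following the argument used for the integral-remainder term in \cite{ShiHighorder2023} (the counterpart of the polynomial-term lemma cited in the proof of Lemma~\ref{lemma3.9}). Using the Cauchy representation $G(t_{n})=\frac{1}{2\pi i}\int e^{zt_{n}}\widehat{G}(z)\,dz$, summing the geometric series $\sum_{n\ge 1}e^{(z-\zeta)t_{n}}$ to express $\widetilde{G}(e^{-\zeta\tau})$ for $\zeta\in\Gamma^{\tau}_{\theta,\kappa}$, interchanging the two contour integrals, and collecting the residue at $\zeta=z$ inside $\Gamma^{\tau}_{\theta,\kappa}$, one arrives at
\begin{equation*}
	V(t_{n})-V^{n}=\int_{0}^{t_{n}}\mathcal{E}_{n}(s)\,g^{(k-1)}(s)\,ds+\mathcal{R}_{n},\qquad \mathcal{E}_{n}(s)=\frac{1}{2\pi i}\int_{\Gamma^{\tau}_{\theta,\kappa}}e^{z(t_{n}-s)}K(z)\,dz ,
\end{equation*}
where $K(z)$ is (up to sign) the operator of Lemma~\ref{Lemma 3.11} with $l=k+m-2$, and $\mathcal{R}_{n}$ gathers the contribution of the truncated horizontal segments $\{|\Im\zeta|=\pi/\tau\}$ together with the replacement of $z^{-(l+1)}$ by $\frac{\rho_{l}(e^{-z\tau})}{\Gamma(l+1)}\tau^{l+1}$, both controlled in the standard way via Lemma~\ref{Lemma nn3.3} and the resolvent bound of Lemma~\ref{Lemma 2.3}. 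For $l=k+m-2$ we have $z^{m}/z^{l+1}=z^{1-k}$, matching $\widehat{E}(z)$; and since $m\ge 2$ and $\tau|z|\le c$ on $\Gamma^{\tau}_{\theta,\kappa}$, the first term in the estimate of Lemma~\ref{Lemma 3.11} is dominated by the second, whence $\|K(z)\|\le c\tau^{k}|z|^{k+m-l-1-\gamma}=c\tau^{k}|z|^{1-\gamma}$.

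It then remains to estimate $\mathcal{E}_{n}(s)$ and integrate. Taking the contour radius $\kappa=\min\{1/(t_{n}-s),\,\pi/(\tau\sin\theta)\}$ and estimating the circular arc and the two rays in the customary fashion, one gets, when $t_{n}-s\ge c\tau$,
\begin{equation*}
	\|\mathcal{E}_{n}(s)\|\le c\tau^{k}\int_{\Gamma^{\tau}_{\theta,\kappa}}|e^{z(t_{n}-s)}|\,|z|^{1-\gamma}\,|dz|\le c\tau^{k}(t_{n}-s)^{\gamma-2},
\end{equation*}
while for $t_{n}-s<c\tau$ one keeps $\kappa\sim 1/\tau$ and bounds by $c\tau^{k+\gamma-2}\le c\tau^{k}(t_{n}-s)^{\gamma-2}$, using $\gamma-2<0$; the remainder $\mathcal{R}_{n}$ obeys the same bound. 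Integrating in $s$ then yields $\|V(t_{n})-V^{n}\|_{L^{2}(\Omega)}\le c\tau^{k}\int_{0}^{t_{n}}(t_{n}-s)^{\gamma-2}\|g^{(k-1)}(s)\|_{L^{2}(\Omega)}\,ds$, which is finite by assumption. The principal obstacle in a full proof is the second step: expressing $\widetilde{G}(e^{-\zeta\tau})$ through $\widehat{g^{(k-1)}}$ while $G^{n}=G(t_{n})$ are the \emph{exact} nodal values, and disposing of the contour truncation — this is exactly the place where the reasoning of \cite{ShiHighorder2023} for the integral-remainder term is used, and where the low-regularity hypothesis on $g^{(k-1)}$ enters in its sharp form.
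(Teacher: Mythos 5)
Your target structure agrees with the paper's: write the error as a kernel acting on $g^{(k-1)}$ and show the kernel is bounded by $c\tau^{k}(t_n-s)^{\gamma-2}$, then integrate. Your symbol-level computation is also sound: with $l=k+m-2$ one has $z^m/z^{l+1}=z^{1-k}$, Lemma~\ref{Lemma 3.11} applies since $k\le l\le 2k-1$, and on $\Gamma^{\tau}_{\theta,\kappa}$ the bound collapses to $\|K(z)\|\le c\tau^{k}|z|^{1-\gamma}$, which integrates to $(t_n-s)^{\gamma-2}$. The problem is the step you yourself flag as the ``principal obstacle'': it is not a technicality to be outsourced, it is the entire content of the lemma, and the route you sketch for it does not close. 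The discrete solution is $V^n=\sum_{j}\mathscr{E}^{n-j}_{\tau}G(t_j)$ with $G(t_j)=\int_0^{t_j}\frac{(t_j-s)^{k+m-2}}{\Gamma(k+m-1)}g^{(k-1)}(s)\,ds$, so after swapping sum and integral the discrete error kernel at the \emph{continuous} argument $t_n-s$ is $\bigl((\mathscr{E}_{\tau}-\mathscr{E})\ast\frac{t^{k+m-2}}{\Gamma(k+m-1)}\bigr)(t)$ with $t=t_n-s$ ranging over all of $(0,t_n)$. This object admits a contour representation with $e^{z(t_n-s)}$ only at grid points; your geometric-series/residue argument for $\widetilde{G}(e^{-\zeta\tau})$ would additionally produce aliasing contributions from the poles at $z=\zeta+2\pi ik/\tau$, $k\ne 0$, and you give no argument that these, or your $\mathcal{R}_n$, obey the required bound. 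Asserting ``$\mathcal{R}_n$ obeys the same bound'' is where the proof is actually missing.

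The paper's device for exactly this difficulty is different and is the idea your sketch lacks: it Taylor-expands $\bigl(\mathscr{E}\ast\frac{t^{k+m-2}}{\Gamma(k+m-1)}\bigr)(t)$ (and likewise the $\mathscr{E}_{\tau}$-convolution) about the nearest grid point $t_n$, which reduces the off-grid evaluation to (i) grid-point differences $\bigl((\mathscr{E}_{\tau}-\mathscr{E})\ast t^{k+m-l-2}\bigr)(t_n)$, already controlled by the polynomial-source estimates of Lemma~\ref{lemma3.9} (inequality \eqref{aadd3.8}), and (ii) two explicit local remainders $\int_{t_n}^{t}(t-s)^{k+m-2}\mathscr{E}(s)\,ds$ and its discrete analogue, bounded directly via $\|\mathscr{E}(t)\|\le ct^{\gamma-m-1}$ and $\|\mathscr{E}^n_{\tau}\|\le c\tau t_n^{\gamma-m-1}$. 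This yields \eqref{3.0003} for every $t\in(t_{n-1},t_n)$ without ever invoking a Laplace transform of $g^{(k-1)}$, and hence also avoids having to interpret $\widehat{g^{(k-1)}}$ under the weak hypothesis $\int_0^t(t-s)^{\gamma-2}\|g^{(k-1)}(s)\|_{L^2(\Omega)}\,ds<\infty$. If you want to complete your proof along the paper's lines, replace your second step by this Taylor-expansion argument; as written, the proposal has a genuine gap at its central step.
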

	\begin{proof}
		From \eqref{LT} with $\kappa\geq 1$ and $G(t)=\frac{t^{k+m-2}}{\Gamma(k+m-1)} \ast g^{(k-1)}(t)$, it yields
		\begin{equation}\label{nas3.6}
			V(t_{n}) =(\mathscr{E}(t)\ast G(t))(t_{n}) = \left(\left(\mathscr{E}(t)\ast \frac{t^{k+m-2}}{\Gamma(k+m-1)} \right) \ast g^{(k-1)}(t) \right)(t_{n})
		\end{equation}
		with
		\begin{equation}\label{nas3.007}
			\mathscr{E}(t)= \frac{1}{2\pi i} \int_{\Gamma_{\theta,\kappa}} e^{zt}( z^{\gamma} - A)^{-1} z^{m} dz.
		\end{equation}
		Let $\sum^{\infty}_{n=0}\mathscr{E}^{n}_{\tau}\xi^{n}=\widetilde{\mathscr{E_{\tau}}}(\xi):=\left(\delta^{\gamma}_{\tau, k}(\xi) -A\right)^{-1} \delta^{m}_{\tau, k}(\xi)$.
		Using \eqref{ads2.17}, we have
		\begin{equation*}
			\widetilde{V}(\xi)
			= \widetilde{\mathscr{E_{\tau}}}(\xi)\widetilde{G}(\xi)
			=\sum^{\infty}_{n=0}\mathscr{E}^{n}_{\tau}\xi^{n}\sum^{\infty}_{j=0}G^j\xi^{j}
			=\sum^{\infty}_{n=0}\sum^{n}_{j=0}\mathscr{E}^{n-j}_{\tau} G^j \xi^{n}=\sum^{\infty}_{n=0}V^n\xi^{n}
		\end{equation*}
		with
		\begin{equation*}
			V^{n}=\sum^{n}_{j=0}\mathscr{E}^{n-j}_{\tau} G^j:=\sum^{n}_{j=0}\mathscr{E}^{n-j}_{\tau} G(t_{j}).
		\end{equation*}
		
		By the Cauchy's integral formula, $\xi=e^{-z\tau}$ and Cauchy's theorem, one has
		\begin{equation*}
			\mathscr{E}^{n}_{\tau}
			=\frac{\tau}{2\pi i}\int_{\Gamma^{\tau}_{\theta,\kappa}} {e^{zt_n}\left( \delta^{\gamma}_{\tau, k}(e^{-z\tau}) -A\right)^{-1} \delta^{m}_{\tau, k}(e^{-z\tau}) }dz.
		\end{equation*}
		From Lemma \ref{Lemma 2.3} and $\tau t^{-1}_{n} = \frac{1}{n}\leq 1$, $\kappa\geq 1$, it means that
		\begin{equation}\label{3.0002}
			\|\mathscr{E}^{n}_{\tau}\| \leq c \tau \left( \int^{\frac{\pi}{\tau\sin\theta}}_{\kappa} e^{rt_{n}\cos\theta} r^{m-\gamma}dr +\int^{\theta}_{-\theta}e^{\kappa t_{n}\cos\psi} \kappa^{m+1-\gamma} d\psi\right)
			\leq c \tau t_{n}^{\gamma-m-1}.
		\end{equation}
		Let $ \mathscr{E}_{\tau}(t)=\sum^{\infty}_{n=0}\mathscr{E}^{n}_{\tau}\delta_{t_{n}}(t)$ and $\delta_{t_{n}}$ be the Dirac delta function at $t_{n}$.
		Then %we have
		\begin{equation}\label{nas3.8}
			(\mathscr{E}_{\tau}(t)\ast G(t))(t_{n}) = \left(\sum^{\infty}_{j=0}\mathscr{E}^{j}_{\tau}\delta_{t_{j}}(t) \ast G(t) \right)(t_{n})
			= \sum^{n}_{j=0}\mathscr{E}^{n-j}_{\tau} G(t_{j})=V^{n}.
		\end{equation}
		According to the above equations, we have
		\begin{equation*}
			\begin{split}
				\widetilde{(\mathscr{E}_{\tau}\ast t^{l})}(\xi)
				& = \sum^{\infty}_{n=0} \sum^{n}_{j=0}\mathscr{E}^{n-j}_{\tau}t^{l}_{j}\xi^{n}
				= \sum^{\infty}_{j=0} \sum^{\infty}_{n=0}\mathscr{E}^{n}_{\tau}t^{l}_{j}\xi^{n+j} \\
				& = \sum^{\infty}_{n=0}\mathscr{E}^{n}_{\tau}\xi^{n}\sum^{\infty}_{j=0}t^{l}_{j}\xi^{j}
				= \widetilde{\mathscr{E_{\tau}}}(\xi) \tau^{l} \sum^{\infty}_{j=0}j^{l}\xi^{j}
				= \widetilde{\mathscr{E}_{\tau}}(\xi) \tau^{l} \rho_{l}(\xi).
			\end{split}
		\end{equation*}
		Combining \eqref{nas3.6}, \eqref{nas3.8} and Lemma \ref{lemma3.9}, it leads to
		\begin{equation}\label{aadd3.8}
			\left\| \left((\mathscr{E}_{\tau}-\mathscr{E}) \ast \frac{t^{l}}{l!} \right)(t_n) \right\|\leq \left\{ \begin{split}
				&  c\tau^{l+1}t_{n}^{\gamma-m-1}  + c\tau^{k}t_{n}^{\gamma+l-k-m} , \quad l=0,1,3,\ldots\\
				&  c\tau^{l+2}t_{n}^{\gamma-m-2} + c\tau^{k}t_{n}^{\gamma+l-k-m}, \quad l=2,4,6,\ldots.
			\end{split}\right.
		\end{equation}
		with $l\leq k+m-2$.
		
		We next prove that the following estimate \eqref{3.0003} holds for any $t>0$
		\begin{equation}\label{3.0003}
			\left\|\left((\mathscr{E}_{\tau}-\mathscr{E}) \ast \frac{t^{k+m-2}}{\Gamma(k+m-1)} \right)(t)\right\| \leq  c\tau^{k}t^{\gamma-2} \quad \forall t\in (t_{n-1},t_{n}).
		\end{equation}
		Using the Taylor series expansion of $\mathscr{E}(t)$ at $t=t_{n}$, it implies
		\begin{equation*}
			\begin{split}
				\left( \mathscr{E} \ast \frac{t^{k+m-2}}{\Gamma(k+m-1)} \right)(t)
				& = \sum_{l=0}^{k+m-2}  \frac{(t-t_{n})^{l}}{\Gamma(l+1)} \left( \mathscr{E} \ast \frac{t^{k+m-l-2}}{\Gamma(k+m-l-1)} \right)(t_{n}) \\
				& \quad +\frac{1}{\Gamma(k+m-1)} \int^{t}_{t_{n}}(t-s)^{k+m-2} \mathscr{E}(s)ds,
			\end{split}
		\end{equation*}
		which  is valid for the convolution $\left( \mathscr{E}_{\tau} \ast t^{k+m-2} \right)(t)$.
		From \eqref{aadd3.8}, we get
		\begin{equation*}
			\begin{split}
				\left\|(t-t_{n})^{l} \left((\mathscr{E}_{\tau}-\mathscr{E}) \ast t^{k+m-l-2} \right)(t_n) \right\|
				& \leq c\tau^l \left( \tau^{k+m-l-1}t_{n}^{\gamma-m-1} + \tau^{k}t_{n}^{\gamma-l-2} \right)  \\
				& \leq c \tau^{k}t_{n}^{\gamma- 2} \leq  c \tau^{k}t^{\gamma- 2}.
			\end{split}
		\end{equation*}
		
		According to \eqref{nas3.007} and \eqref{fractional resolvent estimate}, the following estimate holds
		\begin{equation*}
			\| \mathscr{E}(t) \|
			\leq c \left( \int^{\infty}_{\kappa}e^{rt\cos\theta}r^{m-\gamma}dr + \int^{\theta}_{-\theta}e^{\kappa t \cos\psi}\kappa^{m+1-\gamma}d\psi \right)
			\leq c t^{\gamma-m-1}.
		\end{equation*}
		Here we use the inequality
		\begin{equation*}
			\begin{split}
				\int^{\frac{\pi}{\tau\sin\theta}}_{\kappa} e^{rt_{n}\cos\theta} r^{m-\gamma}dr \leq c t_n^{\gamma-m-1} \quad {\rm and} \quad
				\int^{\theta}_{-\theta}e^{\kappa t_{n} \cos\psi} \kappa^{m+1-\gamma} d\psi \leq c t_n^{\gamma-m-1}.
			\end{split}
		\end{equation*}
		Furthermore, we obtain
		\begin{equation*}
			\left\| \int^{t}_{t_{n}}(t-s)^{k+m-2} \mathscr{E}(s)ds \right\| \leq c \int^{t_{n}}_{t}(s-t)^{k+m-2} s^{\gamma-m-1}ds \leq c \tau^{k} t^{\gamma-2}.
		\end{equation*}
		By the definition of $ \mathscr{E}_{\tau}(t)=\sum^{\infty}_{n=0}\mathscr{E}^{n}_{\tau}\delta_{t_{n}}(t)$ in \eqref{nas3.8} and \eqref{3.0002}, the following error estimate holds for any $t \in (t_{n-1},t_{n})$
		\begin{equation*}
			\left\| \int^{t}_{t_{n}}(t-s)^{k+m-2} \mathscr{E}_{\tau}(s)ds \right\|
			\leq (t_n-t)^{k+m-2} \| \mathscr{E}^{n}_{\tau} \| \leq c \tau^{k+m-1} t_{n}^{\gamma-m-1}
			\leq c \tau^{k}t^{\gamma- 2}.
		\end{equation*}
		The desired result \eqref{3.0003} is obtained by the above inequalities.
		The proof is completed.
	\end{proof}
	
	For simplicity, we denote
	\begin{equation}\label{initialesti}
		\left\|J_{\upsilon} \right\|_{L^2(\Omega)}=
		\left\{ \begin{split}
			& c \tau^{m+2} t_{n}^{-m-2} \left\| \upsilon \right\|_{L^2(\Omega)} + c\tau^{k} t_{n}^{-k} \left\| \upsilon \right\|_{L^2(\Omega)}, \quad m=2,4,6,\\
			& c \tau^{m+1} t_{n}^{-m-1} \left\| \upsilon \right\|_{L^2(\Omega)} + c\tau^{k} t_{n}^{-k} \left\| \upsilon \right\|_{L^2(\Omega)}, \quad m=3,5,7,
		\end{split}\right.
	\end{equation}
	\begin{equation}\label{initialesti1}
		\left\|J_{b} \right\|_{L^2(\Omega)}=
		\left\{ \begin{split}
			& c \tau^{m+2} t_{n}^{-m-1} \left\| b \right\|_{L^2(\Omega)} + c\tau^{k} t_{n}^{1-k} \left\| b \right\|_{L^2(\Omega)},  \quad m=2,4,6,\\
			& c  \tau^{m+3} t_{n}^{-m-2} \left\| b \right\|_{L^2(\Omega)} + c\tau^{k} t_{n}^{1-k} \left\| b \right\|_{L^2(\Omega)}, \quad m=3,5,7,
		\end{split}\right.
	\end{equation}
	and
	\begin{equation*}
		\left\|J_{g} \right\|_{L^2(\Omega)} \! = \!
		\left\{ \begin{split}
			& c \sum\limits_{l=0}^{k-2} \left( \tau^{l+m+2}t_{n}^{\gamma-m-2} + \tau^{k}t_{n}^{\gamma+l-k} \right) \left\| g^{(l)}(0)\right\|_{L^2(\Omega)}, \quad l+m=2,4,6,\ldots,\\
			& c \sum\limits_{l=0}^{k-2} \left( \tau^{l+m+1}t_{n}^{\gamma-m-1} + \tau^{k}t_{n}^{\gamma+l-k} \right) \left\| g^{(l)}(0)\right\|_{L^2(\Omega)}, \quad l+m=3,5,7,\ldots.
		\end{split}\right.
	\end{equation*}
	Then we get the following result.
	\begin{theorem}\label{theorem3.9} %{\color{blue} with $2\leq k \leq 6$}
		Let $V(t_{n})$ and $V^{n}$ be the solutions of \eqref{rrfee} and \eqref{2.3}, respectively.
		Let $\upsilon, b\in L^{2}(\Omega)$, $g\in C^{k-2}([0,T]; L^{2}(\Omega))$ and $\int_{0}^{t} (t-s)^{\gamma-2} \left\| g^{(k-1)}(s) \right\|_{L^2(\Omega)} ds <\infty$ with $1<\gamma <2$.
		Then the following error estimate holds for any $t_n>0$
		\begin{equation*}
			\begin{split}
				\left\|V^{n}-V(t_{n})\right\|_{L^2(\Omega)}
				&\leq \|J_{\upsilon}\|_{L^2(\Omega)}+\|J_{b}\|_{L^2(\Omega)}+\|J_{g}\|_{L^2(\Omega)}\\
				&\quad + c\tau^{k} \int_{0}^{t_{n}} (t_n-s)^{\gamma-2} \left\| g^{(k-1)}(s) \right\|_{L^2(\Omega)} ds.
			\end{split}
		\end{equation*}
			In particular, for $k=1$, we have the following error estimate for any $t_n>0$
			\begin{equation*}
				\left\|V^{n}-V(t_{n})\right\|_{L^2(\Omega)}
				\leq \|J_{\upsilon}\|_{L^2(\Omega)}+\|J_{b}\|_{L^2(\Omega)} + c\tau \int_{0}^{t_{n}} (t_n-s)^{\gamma-2} \left\| g(s) \right\|_{L^2(\Omega)} ds.
			\end{equation*}
	\end{theorem}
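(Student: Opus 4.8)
The plan is to exploit the linearity of both solution representations and to reduce everything to the kernel estimates already established. By \eqref{LT} and Lemma~\ref{Lemma2.1}, both $V(t_n)$ and $V^n$ split as $V=V_\upsilon+V_b+V_g$ and $V^n=V^n_\upsilon+V^n_b+V^n_g$ according to the three forcing contributions carrying $A\upsilon$, $Ab$ and $G$; by the triangle inequality it then suffices to bound the three errors $\|V_\upsilon(t_n)-V^n_\upsilon\|_{L^2(\Omega)}$, $\|V_b(t_n)-V^n_b\|_{L^2(\Omega)}$ and $\|V_g(t_n)-V^n_g\|_{L^2(\Omega)}$ separately, and to match them against $\|J_\upsilon\|_{L^2(\Omega)}$, $\|J_b\|_{L^2(\Omega)}$ and $\|J_g\|_{L^2(\Omega)}$ plus the final convolution term.

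For the initial-data part, subtracting the two representations and splitting the continuous contour $\Gamma_{\theta,\kappa}$ into its truncation $\Gamma^\tau_{\theta,\kappa}$ and the remainder gives
\begin{equation*}
V_\upsilon(t_n)-V^n_\upsilon = -\frac{1}{2\pi i}\int_{\Gamma^\tau_{\theta,\kappa}} e^{zt_n}\,\mathcal{K}_\upsilon(z)\,\upsilon\,dz + \frac{1}{2\pi i}\int_{\Gamma_{\theta,\kappa}\setminus\Gamma^\tau_{\theta,\kappa}} e^{zt_n}\,(z^\gamma-A)^{-1}z^{-1}A\,\upsilon\,dz,
\end{equation*}
with $\mathcal{K}_\upsilon$ from \eqref{addeq3.1}. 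To accommodate $\upsilon\in L^2(\Omega)$ I would first rewrite $(z^\gamma-A)^{-1}z^{-1}A=-z^{-1}+(z^\gamma-A)^{-1}z^{\gamma-1}$ (and the analogous rewriting inside $\mathcal{K}_\upsilon$, exactly as in the proof of Lemma~\ref{addaddLemma 3.6}). Then I would insert the bound of Lemma~\ref{addLemma 3.6} into the first integral, choose the contour radius $\kappa\asymp t_n^{-1}$, and apply the elementary estimate $\int_{\Gamma_{\theta,\kappa}}|e^{zt_n}|\,|z|^{\beta}\,|dz|\le c\,t_n^{-\beta-1}$; this produces exactly the two terms of $\|J_\upsilon\|_{L^2(\Omega)}$ in \eqref{initialesti}. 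For the tail integral, $\Re z<0$ on $\Gamma_{\theta,\kappa}\setminus\Gamma^\tau_{\theta,\kappa}$ with $|z|\ge \pi/(\tau\sin\theta)$, so it is bounded by $c\int_{\pi/(\tau\sin\theta)}^{\infty}e^{-c r t_n}r^{-1}\,dr\le c\,e^{-cn}\le c\,\tau^k t_n^{-k}$, which is absorbed into the $\tau^k t_n^{-k}\|\upsilon\|_{L^2(\Omega)}$ term. The $b$-part is treated identically, now using $\mathcal{K}_b$ from Lemma~\ref{addaddLemma 3.6}, and yields $\|J_b\|_{L^2(\Omega)}$ from \eqref{initialesti1}.

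For the source part I would substitute the finite Taylor expansion \eqref{gs3.3} of $G(t)=J^m g(t)$. Since $g^{(-i)}(0)=J^i g(0)=0$ for $i\ge 1$ and $m\ge 2$, every polynomial term $\frac{t^l}{\Gamma(l+1)}g^{(l-m)}(0)$ with $l<m$ vanishes, so only $l=m,\dots,k+m-2$ survive; writing $l=l'+m$ with $l'=0,\dots,k-2$ and applying Lemma~\ref{lemma3.9} term by term (the even/odd dichotomy of $l'+m$ coming directly from the two branches of that lemma) reproduces precisely $\|J_g\|_{L^2(\Omega)}$. The remaining convolution term $\frac{t^{k+m-2}}{\Gamma(k+m-1)}\ast g^{(k-1)}(t)$ is handled by Lemma~\ref{lemma3.10}, contributing the last term $c\tau^k\int_0^{t_n}(t_n-s)^{\gamma-2}\|g^{(k-1)}(s)\|_{L^2(\Omega)}\,ds$. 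Summing the three contributions gives the general estimate. For $k=1$ one is forced to take $m=2$, hence $k+m-2=1$, so \eqref{gs3.3} collapses to $G(t)=g^{(-2)}(0)+t\,g^{(-1)}(0)+t\ast g(t)=t\ast g(t)$ with no surviving polynomial terms (the sum defining $\|J_g\|_{L^2(\Omega)}$ is empty); Lemma~\ref{lemma3.10} with $k=1$ then delivers the stated $k=1$ bound directly.

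The main obstacle I anticipate is the contour-level bookkeeping for the incompatible data $\upsilon,b\in L^2(\Omega)$: one must carefully legitimize the resolvent rewriting $(z^\gamma-A)^{-1}z^{-1}A=-z^{-1}+(z^\gamma-A)^{-1}z^{\gamma-1}$ (and its $b$-analogue) simultaneously inside the continuous representation \eqref{LT} and the discrete one of Lemma~\ref{Lemma2.1}, and then control the truncation remainder on $\Gamma_{\theta,\kappa}\setminus\Gamma^\tau_{\theta,\kappa}$ uniformly in $\tau$ and $t_n$ so that it is dominated by the $\tau^k$-terms. Once this is in place, the remainder is a routine combination of Lemmas~\ref{Lemma 2.3}, \ref{addLemma 3.6}, \ref{addaddLemma 3.6}, \ref{lemma3.9} and \ref{lemma3.10} together with the standard $\kappa\asymp t_n^{-1}$ contour estimates.
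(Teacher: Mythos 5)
Your proposal is correct and follows essentially the same route as the paper: the same decomposition of $V^n-V(t_n)$ into the $\upsilon$, $b$, and source contributions (the paper's $I_1-I_2$, $I_3-I_4$, $I_5$), the same kernel estimates (Lemmas~\ref{addLemma 3.6} and \ref{addaddLemma 3.6} with the truncated-contour/tail split), and the same treatment of the source term via the Taylor expansion \eqref{gs3.3} together with Lemmas~\ref{lemma3.9} and \ref{lemma3.10}. The paper merely states this more tersely, deferring the $I_1$, $I_2$, $I_5$ bounds to Theorem~3.8 of \cite{ShiHighorder2023}; your write-up supplies the same details explicitly, including the correct handling of the $k=1$ case.
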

	\begin{proof}
		Subtracting \eqref{LT} from \eqref{DLT}, we have the following split
		\begin{equation*}
			V^{n}-V(t_{n}) = I_{1} - I_{2} + I_{3} - I_{4} + I_{5}
		\end{equation*}
		with the related initial terms
		\begin{equation}\label{ada3.a1}
			I_{1} = \frac{1}{2\pi i} \int_{\Gamma^{\tau}_{\theta,\kappa}} e^{zt_{n}} \mathcal{K}_ \upsilon(z) \upsilon dz, \quad
			I_{2} = \frac{1}{2\pi i} \int_{\Gamma_{\theta,\kappa}\backslash \Gamma^{\tau}_{\theta,\kappa}} e^{zt_{n}} ( z^{\gamma}-A)^{-1} z^{-1} A\upsilon dz,
		\end{equation}
		\begin{equation}\label{ada3.a2}
			I_{3} = \frac{1}{2\pi i} \int_{\Gamma^{\tau}_{\theta,\kappa}} e^{zt_{n}} \mathcal{K}_b(z)  b dz, \quad
			I_{4} = \frac{1}{2\pi i} \int_{\Gamma_{\theta,\kappa}\backslash \Gamma^{\tau}_{\theta,\kappa}} e^{zt_{n}} ( z^{\gamma}-A)^{-1} z^{-2}Ab dz.
		\end{equation}
		Here $\mathcal{K}_\upsilon(z)$ and $\mathcal{K}_b(z)$ are defined by \eqref{addeq3.1} and \eqref{addaddeq3.1}, respectively, and the related source term
		\begin{equation*}
			\begin{split}
				I_{5}
				& = \frac{\tau}{2\pi i}\int_{\Gamma^{\tau}_{\theta,\kappa}} e^{zt_n} \left( \delta^{\gamma}_{\tau, k}(e^{-z\tau}) -A\right)^{-1} \delta^{m}_{\tau,k}(e^{-z\tau}) \widetilde{G} (e^{-z\tau}) dz \\
				& \quad - \frac{1}{2\pi i} \int_{\Gamma_{\theta, \kappa}} e^{zt_n} ( z^{\gamma}-A)^{-1} z^{m} \widehat{G}(z) dz.
			\end{split}
		\end{equation*}
		
		For $I_{1}$, $I_{2}$ and $I_{5}$, the similar estimate can be performed as Theorem 3.8 in \cite{ShiHighorder2023}.
		From Lemma \ref{addaddLemma 3.6} and the resolvent estimate \eqref{fractional resolvent estimate},
		we estimate $I_{3}$ and $I_{4}$ as following
		\begin{equation}\label{addI3}
			\left\|I_{3} \right\|_{L^2(\Omega)}\leq
			\left\{ \begin{split}
				& c  \tau^{m+3} t_{n}^{-m-2} \left\| b \right\|_{L^2(\Omega)} + c\tau^{k} t_{n}^{1-k} \left\| b \right\|_{L^2(\Omega)}, \quad m=3,5,7,\\
				& c \tau^{m+2} t_{n}^{-m-1} \left\| b \right\|_{L^2(\Omega)} + c\tau^{k} t_{n}^{1-k} \left\| b \right\|_{L^2(\Omega)},  \quad m=2,4,6.
			\end{split}\right.
		\end{equation}
		and
		\begin{equation}\label{addI4}
			\left\|I_{4}\right\|_{L^2(\Omega)}
			\leq c\int_{\Gamma_{\theta,\kappa}\backslash \Gamma^{\tau}_{\theta,\kappa}} {\left|e^{zt_{n}}\right||z|^{-2} \left\| b \right\|_{L^2(\Omega)}} |dz|
			\leq c\tau^{k} t^{1-k}_{n}\left\| b \right\|_{L^2(\Omega)}.
		\end{equation}
		Then we have $\left\|I_{3} \right\|_{L^2(\Omega)}+\left\|I_{4} \right\|_{L^2(\Omega)}\leq \left\|J_{b} \right\|_{L^2(\Omega)}.$
		The proof is completed.
	\end{proof}
	
	\section{Error analysis: Hyper-singular source function $t^{\mu}q$, $-2< \mu < -1$}\label{Se:HSF}
	We first introduce the polylogarithm function as following
	\begin{equation}\label{polylogarithm function}
		Li_{p}(\xi)= \sum_{j=1}^{\infty} \frac{\xi^{j}}{j^{p}},\quad p\notin \mathbb{N}
	\end{equation}
	with the Riemann zeta function $\zeta(p)=Li_{p}(1)$.
	
	Let $g(t) = t^{\mu}q$ with $-2< \mu < -1$.
	The Laplace transform of such function does not exist in the classical sense.
	However, it can be given by the finite-part integrals.
	In this way, the Laplace transform of $g(t)$ is defined by \cite[Eq.\,(2.256)]{PodlubnyFractional1999}
	\begin{equation*}
		\widehat{g}(z) = \frac{\Gamma(\mu+1)}{z^{\mu+1}} q, \quad -2<\mu<-1.
	\end{equation*}
	Moreover, it implies $\widehat{G}(z) = \frac{\Gamma(\mu+1)}{z^{\mu+m+1}} q$.
	Here $G(t)=J^{m}g(t) = \frac{\Gamma(\mu+1)t^{\mu+m}}{\Gamma(\mu+m+1)}q$ is calculated by the Hadamard finite-part integrals in \eqref{smoothing1}.
	
	According to \eqref{LT} and \eqref{DLT}, we obtain the continuous solution
	\begin{equation*}%\label{add4.1}
		V(t) = \frac{1}{2\pi i} \int_{\Gamma_{\theta, \kappa}} e^{zt} ( z^{\gamma}-A)^{-1}\left( z^{-1} A\upsilon + z^{-2}Ab + \frac{\Gamma(\mu+1)}{z^{\mu+1}} q \right) dz,
	\end{equation*}
	and the discrete solution
	\begin{equation*}
		\begin{split}
			V^{n}
			&=\frac{\tau}{2\pi i}\int_{\Gamma^{\tau}_{\theta,\kappa}} e^{zt_n} \left(  \delta^{\gamma}_{\tau, k}(e^{-z\tau}) -A\right)^{-1} \delta^{m}_{\tau, k}(e^{-z\tau}) \\
			& \qquad \qquad \quad \left( \frac{\rho_{m}(e^{-z\tau})}{\Gamma(m+1)} \tau^{m} A\upsilon + \frac{\rho_{m+1}(e^{-z\tau})}{\Gamma(m+2)} \tau^{m+1} Ab + \widetilde{G} (e^{-z\tau}) \right)dz
		\end{split}
	\end{equation*}
	with
	\begin{equation*}
		\begin{split}
			\widetilde{G}(\xi)= \sum^{\infty}_{n=1} G^{n} \xi^{n}
			&= q \frac{ \Gamma(\mu+1) \tau^{\mu+m} \sum^{\infty}_{n=1} n^{\mu+m}\xi^{n}}{\Gamma(\mu+m+1)}
			= q \frac{ \Gamma(\mu+1) \tau^{\mu+m} Li_{-\mu-m}(\xi)}{\Gamma(\mu+m+1)}.
		\end{split}
	\end{equation*}
	
	\begin{lemma}\cite{Jinanalysis2016}\label{addLemma:LipCA}
		Let $|z\tau|\leq \frac{\pi}{\sin\theta}$ and $\theta>\pi/2$ be close to $\pi/2$.
		Then we have
		\begin{equation*}
			Li_{p}(e^{-z\tau}) = \Gamma(1-p)(z\tau)^{p-1} + \sum_{j=0}^{\infty} (-1)^{j} \zeta(p-j)\frac{(z\tau)^{j}}{\Gamma(j+1)}, \quad p\notin \mathbb{N},
		\end{equation*}
		and the infinite series converges absolutely.
		Here $\zeta$ denotes the Riemann zeta function.
	\end{lemma}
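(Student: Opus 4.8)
The plan is to derive the expansion from a Mellin--Barnes representation of the polylogarithm followed by a residue computation; the statement is in fact a classical one (a form of Robinson's expansion of the Bose--Einstein integral). Write $w=z\tau$; in the situation of interest $0<|w|\le\pi/\sin\theta<2\pi$ and, since $z$ runs over the contour $\Gamma^{\tau}_{\theta,\kappa}$, also $|\arg w|\le\theta$, slightly above $\pi/2$. Starting from the Mellin inversion identity $e^{-nw}=\frac{1}{2\pi i}\int_{(c)}\Gamma(s)\,n^{-s}w^{-s}\,ds$ (valid for $c>0$, $\Re w>0$, with $(c)$ the vertical line $\Re s=c$), inserting it into $Li_{p}(e^{-w})=\sum_{n\ge1}n^{-p}e^{-nw}$, and interchanging sum and integral --- legitimate once $c>\max(0,\,1-\Re p)$, because then $\sum_{n}n^{-c-\Re p}<\infty$ while $|\Gamma(c+it)|$ decays like $e^{-\pi|t|/2}$ --- one obtains
\begin{equation*}
Li_{p}(e^{-w})=\frac{1}{2\pi i}\int_{(c)}\Gamma(s)\,\zeta(p+s)\,w^{-s}\,ds,\qquad\Re w>0 .
\end{equation*}
The integrand is meromorphic, with a simple pole of $\zeta(p+s)$ at $s=1-p$ of residue $\Gamma(1-p)\,w^{p-1}$ and the simple poles of $\Gamma$ at $s=-k$, $k=0,1,2,\dots$, of residue $(-1)^{k}\zeta(p-k)\,w^{k}/\Gamma(k+1)$; the hypothesis $p\notin\mathbb{N}$ makes all these poles simple and mutually distinct.

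The next step is to push the contour from $\Re s=c$ to $\Re s=-N-1/2$ for $N\in\mathbb{N}$, which by the residue theorem gives
\begin{equation*}
\begin{split}
Li_{p}(e^{-w})&=\Gamma(1-p)\,w^{p-1}+\sum_{k=0}^{N}(-1)^{k}\zeta(p-k)\frac{w^{k}}{\Gamma(k+1)}\\
&\quad+\frac{1}{2\pi i}\int_{(-N-1/2)}\Gamma(s)\,\zeta(p+s)\,w^{-s}\,ds .
\end{split}
\end{equation*}
The crux of the proof is to show that the remainder integral tends to $0$ as $N\to\infty$ whenever $0<|w|<2\pi$ and $\Re w>0$. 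On $\Re s=-N-1/2$ one uses the reflection formula $\Gamma(s)\Gamma(1-s)=\pi/\sin\pi s$ together with Stirling's asymptotics to get $|\Gamma(s)|\le c\,e^{-\pi|\Im s|/2}/\Gamma(N+1)$ up to polynomial factors, and the functional equation of $\zeta$ to get $|\zeta(p+s)|\le c\,(2\pi)^{-N}\Gamma(N+1)$ up to polynomial factors in $|\Im s|$ and $N$; combined with $|w^{-s}|=|w|^{N+1/2}e^{(\Im s)\arg w}$ and $|\arg w|<\pi/2$, the $\Im s$-dependence is then exponentially integrable, the $N$-dependent part is $O\!\big((|w|/2\pi)^{N}\big)$ up to polynomial factors, and the integral vanishes in the limit. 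The same $\zeta$-bound gives $|\zeta(p-k)|/\Gamma(k+1)\le c\,(2\pi)^{-k}$ up to polynomial factors, so the series $\sum_{k\ge0}(-1)^{k}\zeta(p-k)w^{k}/\Gamma(k+1)$ converges absolutely and locally uniformly for $|w|<2\pi$ --- which is the asserted absolute convergence --- and letting $N\to\infty$ establishes the identity for $\Re w>0$. Here the hypothesis $|z\tau|\le\pi/\sin\theta<2\pi$ is exactly what keeps $w$ inside this disc.

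Finally, to cover the full range $|\arg w|\le\theta$ (where $\theta$ exceeds $\pi/2$ and the Mellin--Barnes derivation above no longer applies verbatim), I would argue by analytic continuation in $w$: $Li_{p}(e^{-w})$ is holomorphic on the slit disc $\{0<|w|<2\pi,\ |\arg w|<\pi\}$ because $e^{-w}\notin[1,\infty)$ there, the singular term $\Gamma(1-p)\,w^{p-1}$ is holomorphic on the same set in its principal branch, and the power series is holomorphic on $|w|<2\pi$; since the two sides agree on the open sub-sector $|\arg w|<\pi/2$, they agree on the whole slit disc, in particular for all $w$ allowed by the hypothesis. The main obstacle is the remainder estimate in the second step: it is precisely where one must invoke quantitative growth information for $\zeta$ on far-left vertical lines (through its functional equation) together with Stirling's formula for $\Gamma$; once that is in hand the residue bookkeeping and the analytic-continuation step are routine.
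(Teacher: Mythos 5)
The paper does not prove this lemma at all: it is quoted, with the citation \cite{Jinanalysis2016}, as a known result (the classical singular expansion of the polylogarithm near $\xi=1$, essentially Robinson's expansion of the Bose--Einstein integral), and the cited reference in turn takes it from the classical literature. So there is no in-paper argument to compare against; what you have written is the standard proof of that classical fact, and it is essentially correct. Your route --- the Cahen--Mellin representation $Li_p(e^{-w})=\frac{1}{2\pi i}\int_{(c)}\Gamma(s)\zeta(p+s)w^{-s}\,ds$, collection of the residues at $s=1-p$ and $s=-k$ (with $p\notin\mathbb{N}$ guaranteeing simple, non-colliding poles), contour shift to $\Re s=-N-\tfrac12$, and analytic continuation from $\Re w>0$ to the slit disc to cover $|\arg w|\le\theta>\pi/2$ --- is exactly the argument used in the sources behind the citation, and the identity-theorem step correctly accounts for the fact that the Mellin derivation by itself only covers $\Re w>0$ while the lemma is applied on a sector opening slightly beyond $\pi/2$.

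The only place where your sketch is thinner than a complete proof is the uniform-in-$N$ remainder estimate on $\Re s=-N-\tfrac12$. Note that the exponential decay $e^{-\pi|\Im s|/2}$ of $\Gamma(s)$ is \emph{not} what absorbs the $e^{\pi|\Im s|/2}$ growth coming from the sine factor in the functional equation of $\zeta$; that growth is already cancelled by the $\Gamma(1-p-s)$ factor inside the functional equation itself, so that $\zeta(p+s)$ grows only polynomially in $|\Im s|$ on the shifted line. The surviving $e^{-\pi|\Im s|/2}$ from $\Gamma(s)$ is then what dominates $|w^{-s}|=|w|^{N+1/2}e^{\Im s\,\arg w}$, and this is precisely why the restriction $|\arg w|<\pi/2$ is needed before the continuation step. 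Your bookkeeping of the $N$-dependence (the $(2\pi)^{-N}\Gamma(N+1)$ from $\zeta$ against the $1/\Gamma(N+1)$ from $\Gamma$, leaving $O((|w|/2\pi)^N)$) is the right accounting and also yields the absolute convergence of the series for $|w|<2\pi$, which is all the lemma needs since $|z\tau|\le\pi/\sin\theta<2\pi$. These details would have to be written out in a self-contained proof, but the ingredients you name (reflection formula, Stirling, functional equation of $\zeta$) are the correct ones.
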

	\begin{lemma}\cite{ShiHighorder2023}\label{addLemma4.5}
		Let $\rho_{\mu+m}(\xi)=\sum^{\infty}_{n=1}n^{\mu+m} \xi^{n}$
		with $2\leq m\leq k+1$, $k \leq 6$. Then there exists a positive constant $c$ independent of $\tau$ such that
		\begin{equation*}
			\left| \frac{\rho_{\mu+m}(e^{-z\tau})}{\Gamma{(\mu+m+1)}} \tau^{\mu+m+1} - \frac{1}{z^{\mu+m+1}} \right| \leq c \tau^{\mu+m+1}, \quad -2< \mu < -1.
		\end{equation*}
	\end{lemma}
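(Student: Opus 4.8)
The plan is to recognise $\rho_{\mu+m}$ as a polylogarithm and then invoke Lemma~\ref{addLemma:LipCA}. By the definition \eqref{polylogarithm function} one has $\rho_{\mu+m}(\xi)=\sum_{n=1}^{\infty}n^{\mu+m}\xi^{n}=Li_{-\mu-m}(\xi)$, and since $-2<\mu<-1$ is not an integer and $m\geq 2$, the index $p:=-\mu-m$ lies in $(1-m,2-m)$, so in particular $p\notin\mathbb{N}$. Thus, for $\theta>\pi/2$ close to $\pi/2$ and $|z\tau|\leq\pi/\sin\theta$, Lemma~\ref{addLemma:LipCA} yields
\begin{equation*}
	\rho_{\mu+m}(e^{-z\tau})=Li_{-\mu-m}(e^{-z\tau})=\Gamma(\mu+m+1)(z\tau)^{-\mu-m-1}+\sum_{j=0}^{\infty}(-1)^{j}\zeta(-\mu-m-j)\frac{(z\tau)^{j}}{\Gamma(j+1)},
\end{equation*}
where I used $\Gamma(1-p)=\Gamma(1+\mu+m)=\Gamma(\mu+m+1)$ and $p-1=-\mu-m-1$, and where the series converges absolutely.

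Next I would multiply by $\tau^{\mu+m+1}/\Gamma(\mu+m+1)$ and observe that the singular leading term is matched exactly by the subtracted term: since the two factors $\Gamma(\mu+m+1)$ cancel and $\tau^{\mu+m+1}\tau^{-\mu-m-1}=1$, one gets
\begin{equation*}
	\frac{\tau^{\mu+m+1}}{\Gamma(\mu+m+1)}\,\Gamma(\mu+m+1)(z\tau)^{-\mu-m-1}=z^{-\mu-m-1}=\frac{1}{z^{\mu+m+1}}.
\end{equation*}
Therefore the difference in the statement collapses to the analytic tail
\begin{equation*}
	\frac{\rho_{\mu+m}(e^{-z\tau})}{\Gamma(\mu+m+1)}\tau^{\mu+m+1}-\frac{1}{z^{\mu+m+1}}=\frac{\tau^{\mu+m+1}}{\Gamma(\mu+m+1)}\sum_{j=0}^{\infty}(-1)^{j}\zeta(-\mu-m-j)\frac{(z\tau)^{j}}{\Gamma(j+1)}.
\end{equation*}

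Finally I would estimate the right-hand side. On the contour $\Gamma^{\tau}_{\theta,\kappa}$ one has $|z\tau|\leq\pi/\sin\theta$ (on the rays $|z\tau|=r\tau\leq\pi/\sin\theta$ follows from $|\Im z|=r\sin\theta\leq\pi/\tau$, and on the arc $|z\tau|=\kappa\tau$ is bounded for $\tau$ small), so by the absolute convergence asserted in Lemma~\ref{addLemma:LipCA} the series is bounded by a constant depending only on $\theta$; moreover $\mu+m+1\in(m-1,m)\subset(1,\infty)$, so $\Gamma(\mu+m+1)$ is positive and bounded away from $0$. Combining these gives $\left|\frac{\rho_{\mu+m}(e^{-z\tau})}{\Gamma(\mu+m+1)}\tau^{\mu+m+1}-\frac{1}{z^{\mu+m+1}}\right|\leq c\tau^{\mu+m+1}$ with $c$ independent of $\tau$. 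The only point requiring care is the uniform-in-$\tau$ control of the tail series, but that is precisely supplied by the absolute-convergence part of Lemma~\ref{addLemma:LipCA}; the substantive content of the proof is the exact cancellation of the $\Gamma$-factors in the leading singular term.
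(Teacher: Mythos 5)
Your proof is correct, and it follows the intended route: the paper states this lemma without proof (citing \cite{ShiHighorder2023}), and the argument there is exactly the one you give --- identify $\rho_{\mu+m}(\xi)=Li_{-\mu-m}(\xi)$, apply the singular expansion of Lemma~\ref{addLemma:LipCA} so that the leading term $\Gamma(\mu+m+1)(z\tau)^{-\mu-m-1}$ cancels $z^{-\mu-m-1}$ exactly after multiplying by $\tau^{\mu+m+1}/\Gamma(\mu+m+1)$, and bound the remaining absolutely convergent series uniformly on $|z\tau|\leq\pi/\sin\theta$. Your checks that $p=-\mu-m\notin\mathbb{N}$ and that $\Gamma(\mu+m+1)$ stays away from its poles are exactly the points that need verifying, so nothing is missing.
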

	
	\begin{lemma}\label{addLemma 4.3}
		Let $\delta^{\gamma}_{\tau, k}(\xi)$ be given in \eqref{2.2} with $1<\gamma <2$, $k \leq 6$.
		Then there exists a positive constant $c$ independent of $\tau$ such that for $ \forall z\in \Gamma^{\tau}_{\theta,\kappa}$
		\begin{equation*}
			\left\| \left( \delta^{\gamma}_{\tau, k}(e^{-z\tau}) -A\right)^{-1} \delta^{m}_{\tau, k}(e^{-z\tau}) - ( z^{\gamma}-A)^{-1} z^{m} \right\|
			\leq c \tau^{k} |z|^{k+m-\gamma}.
		\end{equation*}
	\end{lemma}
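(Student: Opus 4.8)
The plan is to estimate the difference by the standard ``add and subtract'' decomposition combined with the resolvent identity, exactly as in the proofs of Lemmas \ref{Lemma 3.4} and \ref{Lemma 3.11}. Abbreviate $\delta := \delta_{\tau,k}(e^{-z\tau})$, so that $\delta^{\gamma} = \delta^{\gamma}_{\tau,k}(e^{-z\tau})$ and $\delta^{m} = \delta^{m}_{\tau,k}(e^{-z\tau})$. I would start from the splitting
\begin{equation*}
\left(\delta^{\gamma}-A\right)^{-1}\delta^{m} - (z^{\gamma}-A)^{-1}z^{m}
= \left(\delta^{\gamma}-A\right)^{-1}\left(\delta^{m}-z^{m}\right)
+ \Big[\left(\delta^{\gamma}-A\right)^{-1} - (z^{\gamma}-A)^{-1}\Big] z^{m}.
\end{equation*}

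For the first summand, Lemma \ref{Lemma 2.3} gives $\|(\delta^{\gamma}-A)^{-1}\| \le c|z|^{-\gamma}$ for $z\in\Gamma^{\tau}_{\theta,\kappa}$, while the factorization $\delta^{m}-z^{m} = (\delta-z)\sum_{j=0}^{m-1}\delta^{j}z^{m-1-j}$ together with the bounds $|\delta-z| \le c\tau^{k}|z|^{k+1}$ and $c_{1}|z| \le |\delta| \le c_{2}|z|$ from Lemma \ref{Lemma 2.3} yields $|\delta^{m}-z^{m}| \le c\tau^{k}|z|^{k+m}$. Hence the first summand has norm at most $c\tau^{k}|z|^{k+m-\gamma}$.

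For the second summand I would use the resolvent identity
\begin{equation*}
\left(\delta^{\gamma}-A\right)^{-1} - (z^{\gamma}-A)^{-1}
= \left(z^{\gamma}-\delta^{\gamma}\right)\left(\delta^{\gamma}-A\right)^{-1}(z^{\gamma}-A)^{-1},
\end{equation*}
and then invoke $|z^{\gamma}-\delta^{\gamma}| \le c\tau^{k}|z|^{k+\gamma}$ from Lemma \ref{Lemma 2.3}, the bound $\|(\delta^{\gamma}-A)^{-1}\| \le c|z|^{-\gamma}$ again, and the resolvent estimate \eqref{fractional resolvent estimate} $\|(z^{\gamma}-A)^{-1}\| \le c|z|^{-\gamma}$, so that this summand is bounded by $c\tau^{k}|z|^{k+\gamma}\cdot|z|^{-\gamma}\cdot|z|^{-\gamma}\cdot|z|^{m} = c\tau^{k}|z|^{k+m-\gamma}$. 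Adding the two estimates gives the claim.

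There is no genuine obstacle; the only point needing a little care is checking that the geometric-type factorization of $\delta^{m}-z^{m}$ does not introduce a $\tau$-dependent constant, which is automatic because $|\delta|$ and $|z|$ are comparable uniformly in $\tau$ on $\Gamma^{\tau}_{\theta,\kappa}$ and $m \le k+1 \le 7$ is bounded. In fact this statement is the $\rho_{l}\equiv 1$ analogue of Lemma \ref{Lemma 3.4}, so one may alternatively just cite that lemma with a trivial modification, as was done in Lemma \ref{Lemma 3.11}.
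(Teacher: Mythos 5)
Your proposal is correct and takes essentially the same route as the paper: the paper's proof uses exactly your decomposition into $I=\left(\delta^{\gamma}_{\tau,k}(e^{-z\tau})-A\right)^{-1}\left(\delta^{m}_{\tau,k}(e^{-z\tau})-z^{m}\right)$ and $II=\left[\left(\delta^{\gamma}_{\tau,k}(e^{-z\tau})-A\right)^{-1}-(z^{\gamma}-A)^{-1}\right]z^{m}$, bounding $II$ by the same chain $c\tau^{k}|z|^{k+\gamma}\,|z|^{-\gamma}\,|z|^{-\gamma}\,|z|^{m}$ via the resolvent identity. You have merely written out explicitly the factorization of $\delta^{m}-z^{m}$ that the paper leaves to the cited lemmas.
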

	\begin{proof}
		Since
		\begin{equation*}
			\left( \delta^{\gamma}_{\tau, k}(e^{-z\tau}) -A\right)^{-1} \delta^{m}_{\tau, k}(e^{-z\tau})  - ( z^{\gamma}-A)^{-1} z^{m} = I + II
		\end{equation*}
		with
		\begin{equation*}
			\begin{split}
				I  & = \left( \delta^{\gamma}_{\tau, k}(e^{-z\tau}) -A\right)^{-1} \left( \delta^{m}_{\tau, k}(e^{-z\tau}) - z^{m} \right), \\
				II & = \left( \left( \delta^{\gamma}_{\tau, k}(e^{-z\tau}) -A\right)^{-1} - (z^{\gamma}-A)^{-1} \right) z^{m}.
			\end{split}
		\end{equation*}
		Using Lemmas \ref{Lemma 2.3} and \ref{Lemma 3.11}, we estimate $I$ and $II$ respectively as following
		\begin{equation*}
			\|I\| \leq c \tau^{k}  |z|^{k+m-\gamma} \quad {\rm and} \quad \|II\| \leq c \tau^{k} |z|^{k + \gamma} |z|^{-\gamma} |z|^{-\gamma}  |z|^{m} = c \tau^{k}  |z|^{k+m-\gamma}.
		\end{equation*}
		By the triangle inequality, the proof is completed.
	\end{proof}
	
	\begin{theorem}\label{theorem4.4}
		Let $V(t_{n})$ and $V^{n}$ be the solutions of \eqref{rrfee} and \eqref{2.3}, respectively.
		Let $\upsilon, b \in L^{2}(\Omega)$ and $g(x,t)=t^{\mu}q$, $-2< \mu < -1$, $q \in L^{2}(\Omega)$.
		Then the following error estimate holds for any $t_n>0$
		\begin{equation*}
			\begin{split}
				\left\|V^{n}-V(t_{n})\right\|_{L^2(\Omega)}
				& \leq \left\|J_{\upsilon} \right\|_{L^2(\Omega)} + \left\|J_{b} \right\|_{L^2(\Omega)} \\
				& \quad + c \left(\tau^{\mu + m+1} t^{\gamma-m-1}_{n} + \tau^{k} t_{n}^{\gamma+\mu-k} \right)\| q \|_{L^2(\Omega)}
			\end{split}
		\end{equation*}
		with $\left\|J_{\upsilon} \right\|_{L^2(\Omega)}$ and $\left\|J_{b} \right\|_{L^2(\Omega)}$ in \eqref{initialesti} and \eqref{initialesti1}.
	\end{theorem}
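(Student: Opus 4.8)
The plan is to subtract the integral representation of $V(t_{n})$ from that of $V^{n}$ and to reuse the decomposition of Theorem \ref{theorem3.9}. Writing
\[
V^{n}-V(t_{n}) = I_{1} - I_{2} + I_{3} - I_{4} + I_{5},
\]
where $I_{1},I_{2}$ are the $\upsilon$-terms built from $\mathcal{K}_{\upsilon}(z)$ of \eqref{addeq3.1}, $I_{3},I_{4}$ the $b$-terms built from $\mathcal{K}_{b}(z)$ of \eqref{addaddeq3.1}, and $I_{5}$ the source contribution, one notes that the $\upsilon$- and $b$-parts of the continuous and discrete representations coincide verbatim with those in Section \ref{Se:GSF}. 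Hence Lemmas \ref{addLemma 3.6} and \ref{addaddLemma 3.6} together with \eqref{fractional resolvent estimate} give $\|I_{1}\|_{L^2(\Omega)}+\|I_{2}\|_{L^2(\Omega)}\leq\|J_{\upsilon}\|_{L^2(\Omega)}$ and $\|I_{3}\|_{L^2(\Omega)}+\|I_{4}\|_{L^2(\Omega)}\leq\|J_{b}\|_{L^2(\Omega)}$ exactly as in the proof of Theorem \ref{theorem3.9}, so the whole task reduces to bounding the source term $I_{5}$.

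For $I_{5}$ I would use $\rho_{\mu+m}(\xi)=Li_{-\mu-m}(\xi)$, so that $\tau\widetilde{G}(e^{-z\tau})=\Gamma(\mu+1)\frac{\rho_{\mu+m}(e^{-z\tau})}{\Gamma(\mu+m+1)}\tau^{\mu+m+1}q$, together with $z^{m}\widehat{G}(z)=\Gamma(\mu+1)z^{-\mu-1}q$; splitting the continuous contour as $\Gamma_{\theta,\kappa}=\Gamma^{\tau}_{\theta,\kappa}\cup(\Gamma_{\theta,\kappa}\backslash\Gamma^{\tau}_{\theta,\kappa})$ then yields
\begin{equation*}
\begin{split}
I_{5}&=\frac{\Gamma(\mu+1)}{2\pi i}\int_{\Gamma^{\tau}_{\theta,\kappa}}e^{zt_{n}}\widetilde{K}(z)\,q\,dz\\
&\quad-\frac{\Gamma(\mu+1)}{2\pi i}\int_{\Gamma_{\theta,\kappa}\backslash\Gamma^{\tau}_{\theta,\kappa}}e^{zt_{n}}(z^{\gamma}-A)^{-1}z^{-\mu-1}q\,dz,
\end{split}
\end{equation*}
where $\widetilde{K}(z)$ is the operator $K(z)$ of Lemma \ref{Lemma 3.11} with the integer $l$ formally replaced by $\mu+m$. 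To estimate $\widetilde{K}(z)$ for $z\in\Gamma^{\tau}_{\theta,\kappa}$ I would peel off the two corrections separately,
\begin{equation*}
\begin{split}
\widetilde{K}(z)&=\big(\delta^{\gamma}_{\tau, k}(e^{-z\tau})-A\big)^{-1}\delta^{m}_{\tau, k}(e^{-z\tau})\Big[\frac{\rho_{\mu+m}(e^{-z\tau})}{\Gamma(\mu+m+1)}\tau^{\mu+m+1}-z^{-\mu-m-1}\Big]\\
&\quad+\big[\big(\delta^{\gamma}_{\tau, k}(e^{-z\tau})-A\big)^{-1}\delta^{m}_{\tau, k}(e^{-z\tau})-(z^{\gamma}-A)^{-1}z^{m}\big]z^{-\mu-m-1},
\end{split}
\end{equation*}
and bound the first summand by Lemmas \ref{Lemma 2.3} and \ref{addLemma4.5} and the second by Lemma \ref{addLemma 4.3}; this should give
\[
\|\widetilde{K}(z)\|\leq c\tau^{\mu+m+1}|z|^{m-\gamma}+c\tau^{k}|z|^{k-\gamma-\mu-1},\qquad z\in\Gamma^{\tau}_{\theta,\kappa}.
\]

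The proof would then be finished by the standard contour bound $\int_{\Gamma_{\theta,\kappa}}|e^{zt}|\,|z|^{\beta}\,|dz|\leq c\,t^{-\beta-1}$, valid for $\beta>-1$ and $0<t\leq T$. Since $m\geq2$, $k\geq1$, $1<\gamma<2$ and $-2<\mu<-1$, both exponents $\beta=m-\gamma$ and $\beta=k-\gamma-\mu-1$ lie in $(-1,\infty)$, so (using $\Gamma^{\tau}_{\theta,\kappa}\subset\Gamma_{\theta,\kappa}$) the $\Gamma^{\tau}_{\theta,\kappa}$-integral is bounded by $c(\tau^{\mu+m+1}t_{n}^{\gamma-m-1}+\tau^{k}t_{n}^{\gamma+\mu-k})\|q\|_{L^2(\Omega)}$; on $\Gamma_{\theta,\kappa}\backslash\Gamma^{\tau}_{\theta,\kappa}$ one has $|z|\geq\pi/(\tau\sin\theta)$ and $|e^{zt_{n}}|=e^{rt_{n}\cos\theta}$ with $\cos\theta<0$, and bounding $|z|^{-\gamma-\mu-1}$ (whose exponent is negative) by its value at $r=\pi/(\tau\sin\theta)$ and then using $e^{-x}\leq c_{k}x^{-k}$ produces the remainder $c\tau^{k}t_{n}^{\gamma+\mu-k}\|q\|_{L^2(\Omega)}$. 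Adding the three contributions gives the asserted estimate. The step I expect to be the main obstacle is the bound on $\widetilde{K}(z)$: one must carry the non-integer exponent $\mu+m$ through the argument, checking that Lemma \ref{addLemma4.5} (which here plays the role of Lemma \ref{Lemma 3.4} of the integer theory) really furnishes a clean $O(\tau^{\mu+m+1})$ estimate with no spurious factor of $|z|$, and that the resulting exponents of $|z|$ remain above $-1$ so that the contour integrals converge and reproduce exactly the negative powers of $t_{n}$ in the statement.
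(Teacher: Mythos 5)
Your proposal is correct and follows essentially the same route as the paper: the paper's proof makes the identical decomposition into the $\upsilon$-, $b$- and source-contributions, reuses \eqref{addI3}--\eqref{addI4} for $I_3,I_4$, and defers the source estimate to the argument you spell out (splitting off the $\rho_{\mu+m}$ discrepancy via Lemma \ref{addLemma4.5} and the kernel discrepancy via Lemma \ref{addLemma 4.3}). You have simply written out the details that the paper delegates to Theorem 4.4 of \cite{ShiHighorder2023}, and your exponent bookkeeping reproduces the stated bound.
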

	\begin{proof}
		Subtracting \eqref{LT} from \eqref{DLT}, we obtain
		\begin{equation*}
			V^{n}-V(t_{n})=I_{1}-I_{2} + I_{3} - I_{4} + I_{5} - I_{6},
		\end{equation*}
		where $I_{1}$, $I_{2}$ and $I_{3}$, $I_{4}$ respectively, are defined in \eqref{ada3.a1} and \eqref{ada3.a2}, and
		\begin{equation*}
			\begin{split}
				I_{5} =& \frac{1}{2\pi i}\int_{\Gamma^{\tau}_{\theta,\kappa}} e^{zt_{n}} \! \left[ \left( \delta^{\gamma}_{\tau, k}(e^{-z\tau})-A \right)^{-1} \delta^{m}_{\tau, k}(e^{-z\tau}) \tau \widetilde{G} (e^{-z\tau}) -(z^{\gamma}-A)^{-1} z^{m} \widehat{G}(z) \right] dz,\\
				I_{6} =& \frac{1}{2\pi i} \int_{\Gamma_{\theta,\kappa}\backslash \Gamma^{\tau}_{\theta,\kappa}} e^{zt_n} (z^{\gamma}-A)^{-1} z^{m} \widehat{G}(z) dz.
			\end{split}
		\end{equation*}
		According to \eqref{addI3} and \eqref{addI4}, we estimate
		\begin{equation*}
			\left\|I_{3} \right\|_{L^2(\Omega)}+\left\|I_{4} \right\|_{L^2(\Omega)}\leq \left\|J_{b} \right\|_{L^2(\Omega)}.
		\end{equation*}
		For $I_{1}$, $I_{2}$ and $I_{5}$, $I_{6}$, the similar estimate can be performed as in Theorem 4.4 of \cite{ShiHighorder2023}.
		The proof is completed.
	\end{proof}
	
	\section{Error analysis: Source function $t^{\mu}\circ f(t)$ with $-2< \mu < -1$}\label{Se:SFC}
	We next analyze the error estimate for the fractional evolution equation \eqref{fee} with the hyper-singular source function $t^{\mu}\circ f(t)$ based on Section \ref{Se:GSF} and \ref{Se:HSF}.
	\subsection{Convergence analysis: Convolution source function $t^{\mu} \circledast f(t)$, $-2< \mu < -1$}
	Let $f(t) = \sum_{j=0}^{k-1} \frac{t^{j}}{\Gamma(j+1)}f^{(j)}(0) + \frac{t^{k-1}}{\Gamma(k)} \ast f^{(k)}(t)$, $1\leq k\leq 6$.
	Then we obtain
	\begin{equation*}
		g(t) = t^{\mu} \circledast f(t) = \sum_{j=0}^{k-1} \frac{ \Gamma(\mu+1)t^{\mu+j+1}}{\Gamma(\mu+j+2)}f^{(j)}(0) + t^{\mu} \circledast \left( \frac{t^{k-1}}{\Gamma(k)} \ast f^{(k)}(t) \right).
	\end{equation*}
	Moreover, for $2\leq m\leq k+1$, $k\leq 6$, it yields
	\begin{equation*}
		G(t) =J^{m}g(t)= \sum_{j=0}^{k-1} \frac{ \Gamma(\mu+1)t^{\mu+j+m+1}}{\Gamma(\mu+j+m+2)}f^{(j)}(0) + \frac{t^{k+m-1}}{\Gamma(k+m)} \ast \left( t^{\mu} \circledast f^{(k)}(t) \right).
	\end{equation*}
	
	\begin{lemma}\label{addLemma4.6}
		Let $V(t_{n})$ and $V^{n}$ be the solutions of \eqref{rrfee} and \eqref{2.3}, respectively.
		Let $\upsilon=b=0$, $g(t)= t^{\mu} \circledast \frac{t^{k-1}}{\Gamma(k)} \ast f^{(k)}(t)$ and $\int_{0}^{t} (t - s)^{\gamma+\mu} \left\| f^{(k)}(s) \right\|_{L^2(\Omega)}ds<\infty $ with $-2< \mu < -1$, $1<\gamma<2$.
		Then the following error estimate holds for any $t_n>0$
		\begin{equation*}
			\left\|V(t_{n})-V^{n}\right\|_{L^2(\Omega)} \leq c\tau^{k} \int_{0}^{t_{n}} (t_{n} - s)^{\gamma+\mu} \left\| f^{(k)}(s) \right\|_{L^2(\Omega)}ds .
		\end{equation*}
	\end{lemma}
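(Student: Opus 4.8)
The plan is to carry the argument of Lemma~\ref{lemma3.10} over essentially verbatim; the only genuinely new feature is the non-integer exponent produced by the hyper-singular factor $t^{\mu}$. First I would recast the source. Since $J^{m}$ is convolution with $t^{m-1}/\Gamma(m)$ and the Hadamard finite-part convolution by $t^{\mu}$ corresponds on the Laplace side to multiplication by $\Gamma(\mu+1)z^{-(\mu+1)}$, associativity of the (finite-part) convolution gives, from $g(t)=t^{\mu}\circledast\bigl(\frac{t^{k-1}}{\Gamma(k)}\ast f^{(k)}(t)\bigr)$,
\begin{equation*}
	G(t)=J^{m}g(t)=\frac{\Gamma(\mu+1)\,t^{\,k+m+\mu}}{\Gamma(k+m+\mu+1)}\ast f^{(k)}(t),
\end{equation*}
a genuine (non-singular) convolution because $k+m+\mu>1$. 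With $\upsilon=b=0$, \eqref{LT} reads $V(t)=(\mathscr{E}\ast G)(t)$ with $\mathscr{E}(t)$ given by \eqref{nas3.007}, while \eqref{ads2.17} gives $\widetilde V(\xi)=\widetilde{\mathscr{E}_{\tau}}(\xi)\widetilde G(\xi)$; exactly as in the proof of Lemma~\ref{lemma3.10} this yields $V^{n}=(\mathscr{E}_{\tau}\ast G)(t_{n})$ with $\mathscr{E}_{\tau}(t)=\sum_{n\ge0}\mathscr{E}^{n}_{\tau}\delta_{t_{n}}(t)$. Hence, by associativity of convolution,
\begin{equation*}
	V(t_{n})-V^{n}=\bigl(\Theta\ast f^{(k)}\bigr)(t_{n}),\qquad
	\Theta(t):=\Bigl((\mathscr{E}_{\tau}-\mathscr{E})\ast\frac{\Gamma(\mu+1)\,t^{\,k+m+\mu}}{\Gamma(k+m+\mu+1)}\Bigr)(t).
\end{equation*}

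The heart of the proof is the kernel bound $\|\Theta(t)\|\le c\tau^{k}t^{\gamma+\mu}$ for all $t>0$, the exact analogue of \eqref{3.0003}, which I would prove in the same two stages. In stage one, one needs the grid-value estimates
\begin{equation*}
	\Bigl\|\Bigl((\mathscr{E}_{\tau}-\mathscr{E})\ast\frac{t^{\,k+m+\mu-l}}{\Gamma(k+m+\mu+1-l)}\Bigr)(t_{n})\Bigr\|\le c\,\tau^{\,k+m+\mu+1-l}t_{n}^{\gamma-m-1}+c\,\tau^{k}t_{n}^{\,\gamma+\mu-l},\quad l=0,1,\ldots,k+m-2,
\end{equation*}
the non-integer counterpart of \eqref{aadd3.8}: the generating function of the grid values of $\mathscr{E}_{\tau}\ast t^{p}$, $p=k+m+\mu-l>0$, is $\widetilde{\mathscr{E}_{\tau}}(\xi)\,\tau^{p}\rho_{p}(\xi)$ with $\rho_{p}(\xi)=Li_{-p}(\xi)$ and $p\notin\mathbb N$, so one inserts the asymptotics of $Li_{-p}(e^{-z\tau})$ near $z\tau=0$ from Lemma~\ref{addLemma:LipCA}, the bound $\bigl|\frac{\rho_{p}(e^{-z\tau})}{\Gamma(p+1)}\tau^{p+1}-z^{-(p+1)}\bigr|\le c\tau^{p+1}$ (the computation of Lemma~\ref{addLemma4.5}, valid in this range of $p$) and the resolvent estimates of Lemmas~\ref{Lemma 2.3} and~\ref{addLemma 4.3}, and evaluates the resulting contour integral over $\Gamma^{\tau}_{\theta,\kappa}$ together with its tail over $\Gamma_{\theta,\kappa}\setminus\Gamma^{\tau}_{\theta,\kappa}$, precisely as in Theorem~\ref{theorem4.4}. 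In stage two, since $\lfloor k+m+\mu\rfloor=k+m-2$, one Taylor-expands both $(\mathscr{E}\ast\frac{\Gamma(\mu+1)t^{k+m+\mu}}{\Gamma(k+m+\mu+1)})(t)$ and $(\mathscr{E}_{\tau}\ast\frac{\Gamma(\mu+1)t^{k+m+\mu}}{\Gamma(k+m+\mu+1)})(t)$ about $t=t_{n}$ with integral remainder of order $k+m-1$ (the $k+m-1$ polynomial coefficients being the grid values of stage one, the top derivative being $\mathscr{E}$ resp. $\mathscr{E}_{\tau}$ convolved with $\frac{\Gamma(\mu+1)t^{\mu+1}}{\Gamma(\mu+2)}$), subtracts, and uses $|t-t_{n}|<\tau$ with stage one for the polynomial part and $\|\mathscr{E}(s)\|\le cs^{\gamma-m-1}$, $\|\mathscr{E}^{n}_{\tau}\|\le c\tau t_{n}^{\gamma-m-1}$ for the two remainders, exactly as in the proof of \eqref{3.0003}.

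Granting $\|\Theta(t)\|\le c\tau^{k}t^{\gamma+\mu}$, the conclusion is immediate:
\begin{equation*}
	\begin{split}
		\|V(t_{n})-V^{n}\|_{L^2(\Omega)}&=\|(\Theta\ast f^{(k)})(t_{n})\|_{L^2(\Omega)}\le\int_{0}^{t_{n}}\|\Theta(t_{n}-s)\|\,\|f^{(k)}(s)\|_{L^2(\Omega)}\,ds\\
		&\le c\tau^{k}\int_{0}^{t_{n}}(t_{n}-s)^{\gamma+\mu}\,\|f^{(k)}(s)\|_{L^2(\Omega)}\,ds,
	\end{split}
\end{equation*}
which is finite by hypothesis since $\gamma+\mu>-1$. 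I expect the main obstacle to be stage one of the kernel bound: establishing, uniformly on $\Gamma^{\tau}_{\theta,\kappa}$, that $\widetilde{\mathscr{E}_{\tau}}(e^{-z\tau})\frac{\tau^{p}\rho_{p}(e^{-z\tau})}{\Gamma(p+1)}$ approximates $(z^{\gamma}-A)^{-1}z^{\,m-p-1}$ to order $\tau^{k}$ in the appropriate weighted norm, together with the matching decay of the tail integral, and then checking that all the resulting powers of $\tau$ and $t_{n}$ recombine into $c\tau^{k}t^{\gamma+\mu}$ (with the usual mild care on the first time strip $t\in(0,\tau)$). The remaining ingredient, a rigorous justification of the recasting of $G$ above, is routine Laplace-transform book-keeping based on \eqref{smoothing1} and the identities of Section~\ref{Se:SFC}.
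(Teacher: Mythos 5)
Your proposal reaches the right statement but by a genuinely different, and considerably heavier, route than the paper. Both of you begin by identifying $G(t)=J^m g(t)$ with $\frac{\Gamma(\mu+1)\,t^{k+m+\mu}}{\Gamma(k+m+\mu+1)}\ast f^{(k)}(t)$, but you then keep the fractional power in the \emph{kernel} and propose to re-prove the whole two-stage kernel bound of Lemma~\ref{lemma3.10} for the non-integer exponent $k+m+\mu$, using the polylogarithm asymptotics of Lemma~\ref{addLemma:LipCA} and a family of analogues of Lemma~\ref{addLemma4.5}. The paper instead factors the same object the other way: by \eqref{Hadmint} and \eqref{smoothing1},
\begin{equation*}
	G(t)=\frac{t^{k+m-2}}{\Gamma(k+m-1)}\ast g^{(k-1)}(t),\qquad g^{(k-1)}(t)=\frac{t^{\mu+1}}{\mu+1}\ast f^{(k)}(t),
\end{equation*}
i.e.\ it pushes the fractional power into the \emph{data} function $g^{(k-1)}$, which is an ordinary (weakly singular, integrable) convolution since $\mu+1>-1$. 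This puts $G$ exactly in the form already treated by Lemma~\ref{lemma3.10}, which applies verbatim and yields $c\tau^k\int_0^{t_n}(t_n-s)^{\gamma-2}\|g^{(k-1)}(s)\|\,ds$; the stated bound then follows from the single Beta-function identity $t^{\gamma-2}\ast t^{\mu+1}=c\,t^{\gamma+\mu}$. What the paper's factorization buys is that no new contour estimates are needed at all; what your route would buy, if completed, is a kernel bound $\|\Theta(t)\|\le c\tau^k t^{\gamma+\mu}$ of independent interest, at the cost of redoing Lemmas~\ref{Lemma 3.11}--\ref{addLemma 4.3} and \eqref{aadd3.8}--\eqref{3.0003} with fractional exponents.

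Because your plan leaves the kernel bound as a programme, two of its steps deserve a warning. In stage two, the top Taylor coefficient is $\mathscr{E}\ast\frac{\Gamma(\mu+1)t^{\mu+1}}{\Gamma(\mu+2)}$, and the naive pointwise bound $\|\mathscr{E}(t)\|\le ct^{\gamma-m-1}$ from \eqref{nas3.007} makes the convolution integral $\int_0^s w^{\gamma-m-1}(s-w)^{\mu+1}\,dw$ divergent at $w=0$ (since $\gamma-m-1<-1$); you must instead estimate this term directly from its contour representation $\frac{1}{2\pi i}\int_{\Gamma_{\theta,\kappa}}e^{zt}(z^\gamma-A)^{-1}z^{m-\mu-2}\,dz$, which does give $ct^{\gamma+\mu+1-m}$. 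Similarly, converting $\tau^{k+m-1}t_n^{\gamma+\mu+1-m}$ into $\tau^k t^{\gamma+\mu}$ uses $\tau\le t$, which fails on the first strip $t\in(0,t_1)$ and needs the separate treatment you allude to. Neither issue is fatal, but as written the proposal is a credible plan rather than a proof, whereas the paper's rearrangement disposes of the lemma in a few lines.
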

	\begin{proof}
		Using Hadamard’s finite-part integral of \eqref{Hadmint} and \eqref{smoothing1}, we have
		\begin{equation*}
			g(t)= t^{\mu} \circledast \frac{t^{k-1}}{\Gamma(k)} \ast f^{(k)}(t)
			= \frac{t^{k-2}}{\Gamma(k-1)} \ast \frac{t^{\mu+1}}{\mu+1} \ast f^{(k)}(t), \quad k\geq 2
		\end{equation*}
		and
		\begin{equation*}
			g(t)= t^{\mu} \circledast \frac{t^{k-1}}{\Gamma(k)} \ast f^{(k)}(t) = \frac{ t^{\mu+1}}{\mu+1} \ast f^{(k)}(t), \quad k=1.
		\end{equation*}
Thus,  for $2\leq m\leq k+1$, $k\leq 6$, we can check 	
		\begin{equation*}
			G(t) = J^{m}g(t) =\frac{t^{k+m-2}}{\Gamma(k+m-1)} \ast g^{(k-1)}(t)
			~~{\rm with}~~g^{(k-1)}(t) = \frac{t^{\mu+1}}{\mu+1} \ast f^{(k)}(t),~ k\geq 1.
		\end{equation*}

		From Lemma \ref{lemma3.10}, it yields 
		\begin{equation*}
			\begin{split}
				\left\|V(t_{n})-V^{n}\right\|_{L^2(\Omega)}
				&\leq c\tau^{k} \int_{0}^{t_{n}} (t_n-s)^{\gamma-2} \left\| s^{\mu+1} \ast f^{(k)}(s) \right\|_{L^2(\Omega)}ds \\
				& \leq c\tau^{k} \int_{0}^{t_{n}} (t_{n} - s)^{\gamma-2} s^{\mu+1} \ast \left\| f^{(k)}(s) \right\|_{L^2(\Omega)}ds\\
				& \leq c\tau^{k} \int_{0}^{t_{n}} (t_{n} - s)^{\gamma+\mu} \left\| f^{(k)}(s) \right\|_{L^2(\Omega)}ds.
			\end{split}
		\end{equation*}
		The proof is completed.
	\end{proof}
	\begin{theorem}\label{theorema5.2}
		Let $V(t_{n})$ and $V^{n}$ be the solutions of \eqref{rrfee} and \eqref{2.3}, respectively.
		Let $\upsilon, b \in L^{2}(\Omega)$, $g(t)=t^{\mu} \circledast f(t) $, $-2 < \mu < -1$ and $f \in C^{k-1}([0,T]; L^{2}(\Omega))$, $\int_{0}^{t} (t - s)^{\gamma+\mu} \left\| f^{(k)}(s) \right\|_{L^2(\Omega)}ds<\infty $.
		Then the error estimate holds for any $t_n>0$
		\begin{equation*}
			\begin{split}
				\left\|V^{n}-V(t_{n})\right\|_{L^2(\Omega)}
				&\leq \left\|J_{\upsilon} \right\|_{L^2(\Omega)} + \left\|J_{b} \right\|_{L^2(\Omega)} + c\tau^{k} \int_{0}^{t_{n}} (t_{n} - s)^{\gamma+\mu} \left\| f^{(k)}(s) \right\|_{L^2(\Omega)}ds\\
				& \quad + c \sum_{j=0}^{k-1}\left( \tau^{\mu + j + m+2} t^{\gamma-m-1}_{n}  + \tau^{k} t_{n}^{\gamma +\mu+j +1-k} \right) \left\| f^{(j)}(0) \right\|_{L^2(\Omega)}
			\end{split}
		\end{equation*}
		with $\left\|J_{\upsilon} \right\|_{L^2(\Omega)}$ and $\left\|J_{b} \right\|_{L^2(\Omega)}$ in \eqref{initialesti} and \eqref{initialesti1}.
	\end{theorem}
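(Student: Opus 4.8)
\textbf{Proof proposal for Theorem~\ref{theorema5.2}.}
The plan is to reuse the contour-integral splitting from Theorems~\ref{theorem3.9} and~\ref{theorem4.4}. Subtracting the exact representation~\eqref{LT} from the discrete one~\eqref{DLT} and using linearity in the data, write $V^{n}-V(t_{n}) = I_{1}-I_{2}+I_{3}-I_{4}+I_{5}-I_{6}$, where $I_{1},\dots,I_{4}$ are exactly the initial-data terms~\eqref{ada3.a1}--\eqref{ada3.a2} and $I_{5}-I_{6}$ is the source contribution, defined as in the proof of Theorem~\ref{theorem4.4}. By Lemma~\ref{addLemma 3.6} and the resolvent bound~\eqref{fractional resolvent estimate}, $\|I_{1}\|_{L^2(\Omega)}+\|I_{2}\|_{L^2(\Omega)}\le\|J_{\upsilon}\|_{L^2(\Omega)}$, and by Lemma~\ref{addaddLemma 3.6}, $\|I_{3}\|_{L^2(\Omega)}+\|I_{4}\|_{L^2(\Omega)}\le\|J_{b}\|_{L^2(\Omega)}$, exactly as in the proof of Theorem~\ref{theorem3.9}. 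Hence it remains to estimate the source contribution.

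For the source part I would use the decomposition of $G(t)=J^{m}g(t)$ displayed just above the statement, splitting it by linearity into the singular-monomial group $\sum_{j=0}^{k-1}\frac{\Gamma(\mu+1)}{\Gamma(\mu+j+m+2)}t^{\mu+j+m+1}f^{(j)}(0)$ and the convolution remainder $\frac{t^{k+m-1}}{\Gamma(k+m)}\ast\big(t^{\mu}\circledast f^{(k)}(t)\big)$, and estimating the two groups separately. For the $j$th monomial, note that $\frac{\Gamma(\mu+1)}{\Gamma(\mu+j+m+2)}t^{\mu+j+m+1}f^{(j)}(0)=J^{m}\big(t^{\mu+j+1}f^{(j)}(0)\big)$, i.e.\ it has exactly the structure treated in Theorem~\ref{theorem4.4} with $\mu$ replaced by $\mu+j+1$ and $q$ replaced by $f^{(j)}(0)$. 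Since $\mu\in(-2,-1)$ gives $\mu+j+1\in(j-1,j)$, this exponent is never an integer, so Lemmas~\ref{addLemma:LipCA}, \ref{addLemma4.5} and~\ref{addLemma 4.3} still apply with $\mu+j+1$ in place of $\mu$, and repeating the argument of Theorem~\ref{theorem4.4} verbatim yields the bound $c\big(\tau^{\mu+j+m+2}t_{n}^{\gamma-m-1}+\tau^{k}t_{n}^{\gamma+\mu+j+1-k}\big)\|f^{(j)}(0)\|_{L^2(\Omega)}$; summing over $j=0,\dots,k-1$ produces the last sum in the statement.

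For the convolution remainder, rewrite (as in Lemma~\ref{addLemma4.6}, via~\eqref{Hadmint} and~\eqref{smoothing1}) $t^{\mu}\circledast\frac{t^{k-1}}{\Gamma(k)}\ast f^{(k)}(t)$ so that the corresponding piece of $G$ takes the form $\frac{t^{k+m-2}}{\Gamma(k+m-1)}\ast g^{(k-1)}(t)$ with $g^{(k-1)}(t)=\frac{t^{\mu+1}}{\mu+1}\ast f^{(k)}(t)$; Lemma~\ref{addLemma4.6} then gives directly, under the stated finiteness hypothesis, a contribution bounded by $c\tau^{k}\int_{0}^{t_{n}}(t_{n}-s)^{\gamma+\mu}\|f^{(k)}(s)\|_{L^2(\Omega)}\,ds$, the third term in the claimed estimate. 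Collecting the five contributions with the triangle inequality finishes the proof.

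The main obstacle I anticipate is the uniformity of the monomial estimates in $j$: one must confirm that the proof of Theorem~\ref{theorem4.4}, and the polylogarithm expansion of Lemma~\ref{addLemma:LipCA} behind it, goes through for every exponent $\mu+j+1$ with $0\le j\le k-1$---crucially, that no such exponent is a non-negative integer (which holds since $\mu\notin\mathbb{Z}$) and that the resulting temporal weights $t_{n}^{\gamma+\mu+j+1-k}$ and $\tau$-powers remain admissible. Apart from this bookkeeping, the theorem is a direct assembly of Theorems~\ref{theorem3.9}, \ref{theorem4.4} and Lemmas~\ref{addLemma 3.6}, \ref{addaddLemma 3.6}, \ref{addLemma4.6}.
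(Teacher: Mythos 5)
Your proposal is correct and follows essentially the same route as the paper, whose proof of Theorem~\ref{theorema5.2} is the one-line assembly ``From Theorem~\ref{theorem4.4} and Lemma~\ref{addLemma4.6}, the desired result is obtained'': the initial-data terms are absorbed into $\left\|J_{\upsilon}\right\|_{L^2(\Omega)}$ and $\left\|J_{b}\right\|_{L^2(\Omega)}$ exactly as in Theorems~\ref{theorem3.9} and~\ref{theorem4.4}, each monomial $t^{\mu+j+1}f^{(j)}(0)$ is handled by Theorem~\ref{theorem4.4} with the shifted exponent (which reproduces the stated $\tau^{\mu+j+m+2}t_n^{\gamma-m-1}+\tau^{k}t_n^{\gamma+\mu+j+1-k}$ weights), and the convolution remainder is exactly Lemma~\ref{addLemma4.6}. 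Your extra care about the non-integer exponents $\mu+j+1$ is sound bookkeeping that the paper leaves implicit.
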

	\begin{proof}
		From Theorem \ref{theorem4.4} and Lemma \ref{addLemma4.6}, the desired result is obtained.
	\end{proof}
	
	\subsection{Convergence analysis: Product source function $t^{\mu} f(t)$, $-2< \mu < -1$}
	Let  $f(t) = \sum_{j=0}^{k} \frac{t^{j}}{\Gamma(j+1)}f^{(j)}(0) + \frac{t^{k}}{\Gamma(k+1)} \ast f^{(k+1)}(t)$.
	Then we get 
	\begin{equation*}
		\begin{split}
		G(t)
		& = J^{m}g(t)=J^m\left(t^{\mu} f(t)\right)\\
		&= \sum_{j=0}^{k} \frac{ \Gamma(\mu+j+1)t^{\mu+j+m}}{\Gamma(\mu+j+m+1)\Gamma(j+1)}f^{(j)}(0) + \frac{t^{m-1}}{\Gamma(m)} \ast \left( t^{\mu} \left( \frac{t^{k}}{\Gamma(k+1)} \ast f^{(k+1)}(t) \right)\right).
		\end{split}
	\end{equation*}
	
	\begin{lemma}\label{lem5.3ad}
	Let $h_{k}(t)= t^{\mu} \left( \frac{t^{k}}{\Gamma(k+1)} \ast f^{(k+1)}(t) \right)$ with $-2< \! \mu \! < -1 $, $1 \leq \!k\! \leq 6$ and
	\begin{equation*}
		f \in C^{k}([0,T]; L^{2}(\Omega)), \quad \int_{0}^{t} (t-s)^{\mu+1} \left\| f^{(k+1)}(s) \right\|_{L^2(\Omega)}ds < \infty.
	\end{equation*}
	Then the following error estimate holds
	\begin{equation*}
		\left\| h_{k}^{(l)}(t) \right\|_{L^2(\Omega)} \leq c \int_{0}^{t} (t-s)^{\mu+k-l}\left\| f^{(k+1)}(s) \right\|_{L^2(\Omega)} ds \quad \forall l \leq k-1, ~1 \leq k \leq 6.
	\end{equation*}
	In particular, $h_{k}^{(l)}(0)=0$ with $l\leq k-2$, $2 \leq k \leq 6$.
	\end{lemma}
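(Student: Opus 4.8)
The plan is to reduce the whole statement to the Leibniz rule applied to the factorization $h_{k}(t)=t^{\mu}R(t)$, where
\[
R(t):=\frac{t^{k}}{\Gamma(k+1)}\ast f^{(k+1)}(t)=J^{k+1}f^{(k+1)}(t).
\]
First I would record two elementary facts. Because $t^{k}/\Gamma(k+1)$ together with its first $k-1$ derivatives vanishes at $t=0$, differentiation commutes with the convolution, so
\[
R^{(i)}(t)=\frac{t^{k-i}}{\Gamma(k-i+1)}\ast f^{(k+1)}(t),\qquad R^{(i)}(0)=0,\qquad 0\le i\le k;
\]
and from the hypothesis $\int_{0}^{t}(t-s)^{\mu+1}\|f^{(k+1)}(s)\|_{L^2(\Omega)}\,ds<\infty$ a routine splitting of $(0,t)$ using $\mu+1\in(-1,0)$ gives $f^{(k+1)}\in L^{1}(0,T;L^2(\Omega))$, hence $R\in C^{k}([0,T];L^2(\Omega))$ with the stated vanishing at $0$.

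Next, for $t>0$ and $l\le k-1$ the Leibniz rule together with $\frac{d^{l-i}}{dt^{l-i}}t^{\mu}=\frac{\Gamma(\mu+1)}{\Gamma(\mu-l+i+1)}t^{\mu-l+i}$ yields
\[
h_{k}^{(l)}(t)=\sum_{i=0}^{l}\binom{l}{i}\frac{\Gamma(\mu+1)}{\Gamma(\mu-l+i+1)}\,t^{\mu-l+i}R^{(i)}(t).
\]
The crux is a one-line bound for each summand. Since $0\le i\le l\le k-1$ and $\mu<-1$, the exponent $l-i-\mu$ is strictly positive, so for $0\le s\le t$,
\[
(t-s)^{k-i}=(t-s)^{\mu+k-l}(t-s)^{l-i-\mu}\le t^{\,l-i-\mu}(t-s)^{\mu+k-l}.
\]
Inserting this into $\|R^{(i)}(t)\|_{L^2(\Omega)}\le\frac{1}{\Gamma(k-i+1)}\int_{0}^{t}(t-s)^{k-i}\|f^{(k+1)}(s)\|_{L^2(\Omega)}\,ds$ and multiplying by $t^{\mu-l+i}$, the two powers of $t$ cancel exactly, so $\|t^{\mu-l+i}R^{(i)}(t)\|_{L^2(\Omega)}\le\frac{1}{\Gamma(k-i+1)}\int_{0}^{t}(t-s)^{\mu+k-l}\|f^{(k+1)}(s)\|_{L^2(\Omega)}\,ds$. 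Summing over $i$ gives the claimed estimate with the explicit constant $c=\Gamma(\mu+1)\sum_{i=0}^{l}\binom{l}{i}/(\Gamma(\mu-l+i+1)\Gamma(k-i+1))$.

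For the last assertion, note that $l\le k-2$ forces $\mu+k-l\ge\mu+2>0$, whence $\int_{0}^{t}(t-s)^{\mu+k-l}\|f^{(k+1)}(s)\|_{L^2(\Omega)}\,ds\le t^{\mu+k-l}\|f^{(k+1)}\|_{L^{1}(0,T;L^2(\Omega))}\to0$ as $t\to0^{+}$; together with the continuity on $(0,T]$ of each summand in the Leibniz formula, this shows $h_{k}^{(l)}$ extends continuously to $[0,T]$ with $h_{k}^{(l)}(0)=0$. To make the formula legitimate at the endpoint I would run a short induction on $l=0,1,\dots,k-2$: continuity of $h_{k}^{(l)}$ on $[0,T]$ plus $h_{k}^{(l+1)}(t)\to0$ (valid as long as $l+1\le k-2$) implies, by the mean value theorem, that $h_{k}^{(l)}$ is differentiable at $0$ with $h_{k}^{(l+1)}(0)=0$. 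I expect the estimate itself to be painless — it is exactly the exponent splitting displayed above — and the only delicate part to be this bookkeeping near $t=0$: extracting $f^{(k+1)}\in L^{1}$ from the weighted hypothesis and justifying the Leibniz formula up to the endpoint.
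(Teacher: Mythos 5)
Your proposal is correct and follows essentially the same route as the paper: the Leibniz expansion of $t^{\mu}\bigl(\frac{t^{k}}{\Gamma(k+1)}\ast f^{(k+1)}\bigr)$, the exponent splitting $t^{\mu-l+i}(t-s)^{k-i}\leq (t-s)^{\mu+k-l}$ to absorb the singular prefactor, and the observation that $\mu+k-l>0$ for $l\leq k-2$ forces $h_{k}^{(l)}(0)=0$. The extra bookkeeping you supply (deducing $f^{(k+1)}\in L^{1}$ from the weighted hypothesis and the induction justifying the Leibniz formula at $t=0$) is sound but goes beyond what the paper records.
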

	\begin{proof}
	According to the Leibnitz's formula for $h_{k}(t)$, it yields
	\begin{equation}\label{addeq5.5}
		\begin{split}
			h_{k}^{(l)}(t) = \sum_{j=0}^{l} \binom{l}{j} \frac{\Gamma(\mu+1)}{\Gamma(\mu+1-j)} t^{\mu-j} \left( \frac{t^{k-l+j}}{\Gamma(k+1-l+j)} \ast f^{(k+1)}(t) \right) ~\forall l\leq k-1.
		\end{split}
	\end{equation}
	Using \eqref{addeq5.5}, we have the following estimate
	\begin{equation*}
		\begin{split}
			\left\| h_{k}^{(l)}(t) \right\|_{L^2(\Omega)}
			&\leq c\sum_{j=0}^{l} t^{\mu-j} \int_{0}^{t} (t-s)^{k-l+j}\left\| f^{(k+1)}(s) \right\|_{L^2(\Omega)} ds\\
			&\leq c \int_{0}^{t} (t-s)^{\mu+k-l}\left\| f^{(k+1)}(s) \right\|_{L^2(\Omega)} ds, \quad l\leq k-1,~ 1 \leq k \leq 6.
		\end{split}
	\end{equation*}
	In particular, for $l\leq k-2$, $2 \leq k \leq 6$, we have $\left\| h_{k}^{(l)}(t) \right\|_{L^2(\Omega)} \leq c t^{\mu+k-l}$, which implies $h_{k}^{(l)}(0)=0$.
	The proof is completed.
	\end{proof}
	
	\begin{lemma}\label{Lemma5.7}
		Let $V(t_{n})$ and $V^{n}$ be the solutions of \eqref{rrfee} and \eqref{2.3}, respectively.
		Let $\upsilon=b=0$, $g(t)=  t^{\mu} \left( \frac{t^{k}}{\Gamma(k+1)} \ast f^{(k+1)}(t) \right) $, $-2< \! \mu< \!-1$ and $f\in C^{k}([0,T]; L^{2}(\Omega))$, $\int_{0}^{t}(t-s)^{\gamma+ \mu} \left\| f^{(k+1)}(s) \right\|_{L^2(\Omega)}ds <\infty$.
		Then the following error estimate holds for any $t_n>0$
		\begin{equation*}
			\left\|V(t_{n}) - V^{n}\right\|_{L^2(\Omega)} \leq c \tau^{k} \int_{0}^{t_{n}}(t_n-s)^{\gamma+\mu} \left\| f^{(k+1)}(s)\right\|_{L^2(\Omega)} ds.
		\end{equation*}
	\end{lemma}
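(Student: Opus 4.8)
The key observation is that the source function here is precisely $g(t)=h_k(t)=t^{\mu}\bigl(\tfrac{t^{k}}{\Gamma(k+1)}\ast f^{(k+1)}(t)\bigr)$, the remainder term analysed in Lemma \ref{lem5.3ad}. So the plan is to reduce the whole statement to the general-source-function analysis of Section \ref{Se:GSF} (concretely Lemma \ref{lemma3.10}, or equivalently Theorem \ref{theorem3.9}), and then to convert the resulting convolution weight $(t_n-s)^{\gamma-2}$ acting on $h_k^{(k-1)}$ into the weight $(t_n-s)^{\gamma+\mu}$ acting on $f^{(k+1)}$ by a single Beta-function computation.

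First I would show that $G(t)=J^{m}g(t)$ collapses to the pure remainder form. Applying the Taylor expansion \eqref{gs3.3} to $G(t)=J^{m}h_k(t)$, the polynomial coefficients are $h_k^{(l-m)}(0)$ for $l=0,\ldots,k+m-2$. For $l<m$ these are $J^{m-l}h_k(0)$, which vanish because $h_k$ is integrable near $0$ (indeed $h_k(s)=O(s^{\mu+k+1})$ with $\mu+k+1>0$); for $m\le l\le k+m-2$ one has $0\le l-m\le k-2$, and Lemma \ref{lem5.3ad} gives $h_k^{(l-m)}(0)=0$. Hence $G(t)=\tfrac{t^{k+m-2}}{\Gamma(k+m-1)}\ast h_k^{(k-1)}(t)$, exactly the form in Lemma \ref{lemma3.10} with $g^{(k-1)}$ replaced by $h_k^{(k-1)}$. (Stated via Theorem \ref{theorem3.9}: $\|J_{\upsilon}\|_{L^2(\Omega)}=\|J_{b}\|_{L^2(\Omega)}=0$ since $\upsilon=b=0$, and $\|J_{g}\|_{L^2(\Omega)}=0$ since all $h_k^{(l)}(0)$, $l\le k-2$, vanish.)

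Next I would verify the integrability hypothesis of Lemma \ref{lemma3.10}, namely $\int_{0}^{t}(t-s)^{\gamma-2}\|h_k^{(k-1)}(s)\|_{L^2(\Omega)}\,ds<\infty$. By Lemma \ref{lem5.3ad} with $l=k-1$ we have $\|h_k^{(k-1)}(s)\|_{L^2(\Omega)}\le c\int_{0}^{s}(s-r)^{\mu+1}\|f^{(k+1)}(r)\|_{L^2(\Omega)}\,dr$; substituting, interchanging the order of integration, and using the Beta integral
\begin{equation*}
	\int_{r}^{t}(t-s)^{\gamma-2}(s-r)^{\mu+1}\,ds = B(\gamma-1,\mu+2)\,(t-r)^{\gamma+\mu},
\end{equation*}
which is legitimate since $\gamma-1>0$ and $\mu+2>0$, yields $\int_{0}^{t}(t-s)^{\gamma-2}\|h_k^{(k-1)}(s)\|_{L^2(\Omega)}\,ds\le c\int_{0}^{t}(t-r)^{\gamma+\mu}\|f^{(k+1)}(r)\|_{L^2(\Omega)}\,dr$, finite by assumption. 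Then invoking Lemma \ref{lemma3.10} (and, for $k=1$, the $k=1$ part of Theorem \ref{theorem3.9}, which is the same statement with $g^{(k-1)}=h_k$) gives $\|V(t_n)-V^{n}\|_{L^2(\Omega)}\le c\tau^{k}\int_{0}^{t_n}(t_n-s)^{\gamma-2}\|h_k^{(k-1)}(s)\|_{L^2(\Omega)}\,ds$, and applying the bound just derived at $t=t_n$ produces the claimed estimate.

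I do not expect a genuine analytic obstacle: the argument is essentially bookkeeping. The only two points needing care are (i) checking that \emph{every} polynomial/correction term generated by \eqref{gs3.3} truly vanishes — which is exactly the content of the ``$h_k^{(l)}(0)=0$'' clause of Lemma \ref{lem5.3ad} — and (ii) the Fubini interchange together with the Beta identity that merges the two weak singularities $(t-s)^{\gamma-2}$ and $(s-r)^{\mu+1}$ (both with exponents in $(-1,0)$) into the single weight $(t-r)^{\gamma+\mu}$; after that the conclusion is immediate from Lemma \ref{lemma3.10}.
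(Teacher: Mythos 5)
Your proposal is correct and follows essentially the same route as the paper: reduce $G(t)=J^{m}h_k(t)$ to the pure remainder form $\frac{t^{k+m-2}}{\Gamma(k+m-1)}\ast h_k^{(k-1)}(t)$ using the vanishing of $h_k^{(l)}(0)$ for $l\le k-2$ from Lemma \ref{lem5.3ad}, invoke Lemma \ref{lemma3.10}, and then merge the two weakly singular weights $(t_n-s)^{\gamma-2}$ and $(s-w)^{\mu+1}$ into $(t_n-s)^{\gamma+\mu}$ via the Beta integral. You are merely more explicit than the paper about why all Taylor coefficients in \eqref{gs3.3} vanish and about the Fubini/Beta computation, which the paper leaves implicit in its final chain of inequalities.
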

	\begin{proof}
		Let $g(t):=h_k(t)= t^{\mu} \left( \frac{t^{k}}{\Gamma(k+1)} \ast f^{(k+1)}(t) \right)$.
		From  Lemma \ref{lem5.3ad} and using the Taylor series expansion of $h_k(t)$ at $t=0$ with $h^{(l)}(0)=0$, $l\leq k-2$, we have
		\begin{equation*}
			G(t)=J^mg(t)= \frac{t^{m-1}}{\Gamma(m)} \ast h_k(t) = \frac{t^{k+m-2}}{\Gamma(k+m-1)} \ast h_k^{(k-1)}(t),~2\leq m\leq k+1, k \leq 6.
		\end{equation*}
Here $h_k^{(k-1)}(t)$ is defined by \eqref{addeq5.5}.
		According to  Lemmas \ref{lemma3.10} and \ref{lem5.3ad}, it yields
		\begin{equation*}
			\begin{split}
				\left\|V(t_{n})\!-\!V^{n}\right\|_{L^2(\Omega)}
				& \leq c \tau^{k} \int_{0}^{t_{n}} (t_n-s)^{\gamma-2} \left\| h_k^{(k-1)}(s) \right\|_{L^2(\Omega)} ds \\
				& \leq c \tau^{k} \int_{0}^{t_{n}} (t_n-s)^{\gamma-2}  \int_{0}^{s}  (s-w)^{\mu+1} \left\| f^{(k+1)}(w) \right\|_{L^2(\Omega)} dw ds \\
				& \leq c \tau^{k} \int_{0}^{t_{n}}(t_n-s)^{\gamma+ \mu} \left\| f^{(k+1)}(s) \right\|_{L^2(\Omega)}ds.
			\end{split}
		\end{equation*}
		The proof is completed.
	\end{proof}
	
	\begin{theorem}\label{theorem5.5}
		Let $V(t_{n})$ and $V^{n}$ be the solutions of \eqref{rrfee} and \eqref{2.3}, respectively.
		Let $\upsilon, b \in L^{2}(\Omega)$, $g(t)= t^{\mu} f(t)$ with $-2< \mu < -1$ and $f\in C^{k}([0,T]; L^{2}(\Omega))$, $\int_{0}^{t } (t -s)^{\gamma+\mu} \left\| f^{(k+1)}(s) \right\|_{L^2(\Omega)} ds <\infty$.
		Then the following error estimate holds for any $t_n>0$
		\begin{equation*}
			\begin{split}
				\left\|V^{n}-V(t_{n})\right\|_{L^2(\Omega)}
				& \leq \left\|J_{\upsilon} \right\|_{L^2(\Omega)} \! + \left\|J_{b} \right\|_{L^2(\Omega)} + c \tau^{k} \int_{0}^{t_{n}} \! (t_{n} \! -s)^{\gamma+\mu} \left\| f^{(k+1)}(s) \right\|_{L^2(\Omega)} ds  \\
				& \quad + \sum_{j=0}^{k} \left( c\tau^{\mu + j + m + 1} t_{n}^{\gamma - m - 1} + c\tau^{k} t_{n}^{\gamma+ \mu + j -k} \right) \left\| f^{(j)}(0) \right\|_{L^2(\Omega)} \\
			\end{split}
		\end{equation*}
		with $\left\|J_{\upsilon} \right\|_{L^2(\Omega)}$ and $\left\|J_{b} \right\|_{L^2(\Omega)}$ in \eqref{initialesti} and \eqref{initialesti1}.
	\end{theorem}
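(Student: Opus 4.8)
The plan is to reduce Theorem \ref{theorem5.5} to three already-established pieces: the initial-data estimates and the hyper-singular monomial estimate in Theorem \ref{theorem4.4}, and the remainder estimate in Lemma \ref{Lemma5.7}. First I would expand $f$ by Taylor's formula with integral remainder through its $k$-th derivative,
\begin{equation*}
  f(t)=\sum_{j=0}^{k}\frac{t^{j}}{\Gamma(j+1)}f^{(j)}(0)+\frac{t^{k}}{\Gamma(k+1)}\ast f^{(k+1)}(t),
\end{equation*}
and multiply by $t^{\mu}$ to obtain $g(t)=\sum_{j=0}^{k}\frac{\Gamma(\mu+j+1)}{\Gamma(j+1)}\frac{t^{\mu+j}}{\Gamma(\mu+j+1)}f^{(j)}(0)+h_{k}(t)$ with $h_{k}(t)=t^{\mu}\bigl(\frac{t^{k}}{\Gamma(k+1)}\ast f^{(k+1)}(t)\bigr)$. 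Since the scheme \eqref{2.3} and the representations \eqref{LT}, \eqref{DLT} are linear in the data, the error $V^{n}-V(t_{n})$ splits additively into the contribution of $\upsilon,b$, the contributions of the $k+1$ monomials $t^{\mu+j}f^{(j)}(0)$, and the contribution of $h_{k}$.

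For the $\upsilon,b$ part I would reuse verbatim the estimates of the terms $I_{1},\ldots,I_{4}$ in the proof of Theorem \ref{theorem4.4} (via Lemmas \ref{addLemma 3.6}, \ref{addaddLemma 3.6} and the resolvent estimate \eqref{fractional resolvent estimate}), which yields $\|J_{\upsilon}\|_{L^2(\Omega)}+\|J_{b}\|_{L^2(\Omega)}$. For the monomial $t^{\mu}f(0)$, Theorem \ref{theorem4.4} with $q=f(0)$ gives the $j=0$ summand $c(\tau^{\mu+m+1}t_{n}^{\gamma-m-1}+\tau^{k}t_{n}^{\gamma+\mu-k})\|f(0)\|_{L^2(\Omega)}$. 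For $1\le j\le k$ the power $\mu+j$ is non-integer and $>-1$; the argument of Theorem \ref{theorem4.4} (which only uses the polylogarithm expansion of Lemma \ref{addLemma:LipCA}, the kernel bound of Lemma \ref{addLemma4.5} and Lemma \ref{addLemma 4.3}, all valid for any non-integer power $>-2$; cf. Theorem 4.4 of \cite{ShiHighorder2023} for powers $>-1$) produces the summand $c(\tau^{\mu+j+m+1}t_{n}^{\gamma-m-1}+\tau^{k}t_{n}^{\gamma+\mu+j-k})\|f^{(j)}(0)\|_{L^2(\Omega)}$.

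Finally, the remainder term $h_{k}(t)$ is precisely the source treated in Lemma \ref{Lemma5.7}: because $h_{k}^{(l)}(0)=0$ for $l\le k-2$ (Lemma \ref{lem5.3ad}), one has $J^{m}h_{k}=\frac{t^{k+m-2}}{\Gamma(k+m-1)}\ast h_{k}^{(k-1)}(t)$, so Lemma \ref{lemma3.10} applies and, after collapsing the double convolution by Fubini and a Beta integral (convergent since $\gamma-1>0$ and $\mu+2>0$), this contribution is $c\tau^{k}\int_{0}^{t_{n}}(t_{n}-s)^{\gamma+\mu}\|f^{(k+1)}(s)\|_{L^2(\Omega)}\,ds$. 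Adding the four contributions by the triangle inequality gives the stated bound. The only genuinely delicate point is the bookkeeping for the mid-range $j$: since $t^{\mu+j}\notin C^{k-2}$ when $\mu+j<k-2$, those terms cannot be routed through the general-source analysis of Theorem \ref{theorem3.9}, and must instead be handled by the monomial/polylogarithm estimate uniformly in $j$; once this is organized, the proof is a direct assembly of the lemmas above.
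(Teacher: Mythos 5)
Your proposal is correct and follows essentially the same route as the paper, whose proof of Theorem \ref{theorem5.5} is precisely the assembly you describe: Taylor-expand $f$, handle the initial data and the monomials $t^{\mu+j}f^{(j)}(0)$ by the machinery of Theorems \ref{theorem3.9} and \ref{theorem4.4} (the polylogarithm estimates extending to the weakly singular exponents $\mu+j>-1$), and treat the remainder $h_k$ via Lemma \ref{Lemma5.7}. Your added remark on why the mid-range $j$ terms cannot be routed through the $C^{k-2}$ hypothesis of Theorem \ref{theorem3.9} is a correct and worthwhile clarification of a point the paper leaves implicit.
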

	\begin{proof}
		From Theorems \ref{theorem3.9}, \ref{theorem4.4} and Lemma \ref{Lemma5.7}, the desired result is obtained.
	\end{proof}
	
	\section{Convergence analysis for subdiffusion model}\label{Se:Sub}
	Consider the subdiffusion model with the hyper-singular source term \cite[Eq.(4.2.57)]{KilbasTheory2006}, whose prototype equation is, for $0<\gamma<1$
	\begin{equation} \label{subfee}
		\partial^{\gamma}_t (u(t)- \upsilon) - A u(t)=g(t):=t^{\mu}\circ f(t), \quad -2<\mu<-1
	\end{equation}
	with the initial condition $u(0)=\upsilon$.
	Let $V(t)=u(t)-\upsilon$ with $V(0)=0$.
	Then model \eqref{subfee} can be rewritten as
	\begin{equation}\label{rsubfee}
		\partial^{\gamma}_{t} V(t) - A V(t)=\partial^{m}_{t} \left( \frac{t^{m}}{\Gamma(m+1)} A\upsilon  + G(t)\right), \quad 0<t\leq T.
	\end{equation}
	Then ID$m$-BDF$k$ method for \eqref{rsubfee} is designed by
	\begin{equation}\label{sub2.3}
		\partial^{\gamma}_{\tau, k} V^{n} - AV^{n}= \partial^{m}_{\tau, k} \left( \frac{t^{m}_{n}}{\Gamma(m+1)} A\upsilon  + G^{n} \right),\quad 2\leq m\leq k+1.
	\end{equation}
	
	Using the same argument as in the proof of Theorems \ref{theorem4.4}, \ref{theorema5.2} and \ref{theorem5.5}, we can easily carry out the proof of Theorems \ref{theorem6.1}-\ref{theorem6.3} below.
	\begin{theorem}\label{theorem6.1}
		Let $V(t_{n})$ and $V^{n}$ be the solutions of \eqref{rsubfee} and \eqref{sub2.3}, respectively.
		Let $\upsilon \in L^{2}(\Omega)$ and $g(x,t)=t^{\mu}q $, $-2< \mu < -1$, $q \in L^{2}(\Omega)$.
		Then the following error estimate holds for any $t_n>0$
		\begin{equation*}
			\left\|V^{n}-V(t_{n})\right\|_{L^2(\Omega)}
			\leq \left\|J_{\upsilon} \right\|_{L^2(\Omega)} + c \left(\tau^{\mu + m+1} t^{\gamma-m-1}_{n} + \tau^{k} t_{n}^{\gamma+\mu-k} \right)\| q \|_{L^2(\Omega)}
		\end{equation*}
		with $\left\|J_{\upsilon} \right\|_{L^2(\Omega)}$ in \eqref{initialesti}.
	\end{theorem}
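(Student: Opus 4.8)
The plan is to run the proof of Theorem~\ref{theorem4.4} verbatim, with the two modifications that now $\gamma\in(0,1)$ and that \eqref{subfee} carries no $tb$-term, so the contributions $I_3,I_4$ simply disappear. First I would apply the Laplace transform to \eqref{rsubfee}. Exactly as in Section~\ref{Se:HSF}, the Hadamard finite-part integral \eqref{smoothing1} gives $G(t)=J^m g(t)=\Gamma(\mu+1)t^{\mu+m}q/\Gamma(\mu+m+1)$ (so that $G(0)=0$, since $\mu+m>0$) and $\widehat{G}(z)=\Gamma(\mu+1)z^{-\mu-m-1}q$; hence the continuous solution is
\[
V(t)=\frac{1}{2\pi i}\int_{\Gamma_{\theta,\kappa}}e^{zt}(z^\gamma-A)^{-1}\Bigl(z^{-1}A\upsilon+\Gamma(\mu+1)z^{-\mu-1}q\Bigr)\,dz,
\]
with $\pi/2<\theta<\pi$ (for $\gamma<1$ the extra constraint $\theta<\pi/\gamma$ is vacuous, so the sector is wider than in the diffusion--wave case). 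Running the generating-series computation of Lemma~\ref{Lemma2.1} on the scheme \eqref{sub2.3} — it uses only $V^0=G^0=0$ — yields the discrete representation with $A\upsilon$ weighted by $\rho_m(e^{-z\tau})\tau^m/\Gamma(m+1)$ and source generating function $\widetilde{G}(\xi)=\Gamma(\mu+1)\tau^{\mu+m}Li_{-\mu-m}(\xi)q/\Gamma(\mu+m+1)$.

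Next I would subtract the two representations and split $V^n-V(t_n)=I_1-I_2+I_5-I_6$, where $I_1=\frac{1}{2\pi i}\int_{\Gamma^\tau_{\theta,\kappa}}e^{zt_n}\mathcal{K}_\upsilon(z)\upsilon\,dz$ with $\mathcal{K}_\upsilon$ from \eqref{addeq3.1}, $I_2=\frac{1}{2\pi i}\int_{\Gamma_{\theta,\kappa}\backslash\Gamma^\tau_{\theta,\kappa}}e^{zt_n}(z^\gamma-A)^{-1}z^{-1}A\upsilon\,dz$, and $I_5,I_6$ are the source-term integrals of the proof of Theorem~\ref{theorem4.4}. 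For the initial-data terms I would invoke the $\gamma\in(0,1)$ analogue of Lemma~\ref{addLemma 3.6} (which follows from Lemmas~\ref{Lemma 2.3}, \ref{Lemma 3.4} and Lemma~3.5 of \cite{ShiHighorder2023}), together with the elementary bound $\int_\kappa^{\pi/(\tau\sin\theta)}e^{rt_n\cos\theta}r^\alpha\,dr\le ct_n^{-\alpha-1}$, to control $I_1$, and $\|(z^\gamma-A)^{-1}z^{-1}A\|\le c|z|^{-1}$ plus the super-algebraic decay $e^{-x}\le c_k x^{-k}$ on $|z|\ge c\tau^{-1}$ to control the truncation term $I_2$; together these give $\|I_1\|_{L^2(\Omega)}+\|I_2\|_{L^2(\Omega)}\le\|J_\upsilon\|_{L^2(\Omega)}$.

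For the source terms I would write the integrand of $I_5$ as $\bigl[(\delta^\gamma_{\tau,k}(e^{-z\tau})-A)^{-1}\delta^m_{\tau,k}(e^{-z\tau})-(z^\gamma-A)^{-1}z^m\bigr]\tau\widetilde{G}(e^{-z\tau})+(z^\gamma-A)^{-1}z^m\bigl[\tau\widetilde{G}(e^{-z\tau})-\widehat{G}(z)\bigr]$. The first bracket is $O(\tau^k|z|^{k+m-\gamma})$ by the $\gamma\in(0,1)$ version of Lemma~\ref{addLemma 4.3}, while Lemma~\ref{addLemma:LipCA} gives $\tau^{\mu+m+1}|Li_{-\mu-m}(e^{-z\tau})|\le c|z|^{-\mu-m-1}$ on $\Gamma^\tau_{\theta,\kappa}$, so the first part of $I_5$ integrates to $c\tau^k t_n^{\gamma+\mu-k}\|q\|_{L^2(\Omega)}$; the second part, using $\|(z^\gamma-A)^{-1}z^m\|\le c|z|^{m-\gamma}$ and Lemma~\ref{addLemma4.5} (which yields $|\tau\widetilde{G}(e^{-z\tau})-\widehat{G}(z)|\le c\tau^{\mu+m+1}\|q\|_{L^2(\Omega)}$), integrates to $c\tau^{\mu+m+1}t_n^{\gamma-m-1}\|q\|_{L^2(\Omega)}$. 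Finally $\|(z^\gamma-A)^{-1}z^m\widehat{G}(z)\|\le c|z|^{-\gamma-\mu-1}\|q\|_{L^2(\Omega)}$ together with the super-algebraic decay of $e^{zt_n}$ on $|z|\ge c\tau^{-1}$ (using $\tau/t_n=1/n\le1$) bounds $I_6$ by $c\tau^k t_n^{\gamma+\mu-k}\|q\|_{L^2(\Omega)}$. Collecting the four pieces gives the claimed estimate.

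The main obstacle is not analytic but organizational: one must check that Lemmas~\ref{Lemma 2.3}, \ref{Lemma 3.11}, \ref{addLemma 3.6}, \ref{addLemma 4.3} and \ref{addLemma4.5}, all stated for $\gamma\in(1,2)$, remain valid for $\gamma\in(0,1)$. This is automatic: for $\gamma<1$ the resolvent estimate \eqref{fractional resolvent estimate} holds on the \emph{wider} sector $\Sigma_\theta$ with $\theta<\pi/\gamma$, the BDF$k$ consistency bounds are intrinsic properties of $\delta_{\tau,k}$, and BDF$k$ with $k\le6$ is unconditionally stable whenever $\gamma<1<\gamma^\ast(k)$; hence these are precisely the subdiffusion-case estimates already established in \cite{ShiHighorder2023}. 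The only remaining care is to confirm that every tail integral converges as $|z|\to\infty$, which it does because $m-\gamma>-1$ and $k-\mu-1-\gamma>-1$.
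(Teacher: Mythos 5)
Your proposal is correct and follows essentially the same route as the paper, which proves Theorem \ref{theorem6.1} simply by invoking the argument of Theorem \ref{theorem4.4} with the $b$-terms $I_3$, $I_4$ deleted; your additional bookkeeping (checking that the technical lemmas stated for $\gamma\in(1,2)$ persist for $\gamma\in(0,1)$, where they reduce to the subdiffusion estimates of \cite{ShiHighorder2023}) is exactly the verification the paper leaves implicit.
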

	
	\begin{theorem}\label{theorem6.2}
		Let $V(t_{n})$ and $V^{n}$ be the solutions of \eqref{rsubfee} and \eqref{sub2.3}, respectively.
		Let $\upsilon \in L^{2}(\Omega)$, $g(t)=t^{\mu} \circledast f(t) $, $-2 < \mu < -1$ and $f \in C^{k}([0,T]; L^{2}(\Omega))$, $\int_{0}^{t } (t -s)^{\gamma+\mu+1} \left\| f^{(k+1)}(s) \right\|_{L^2(\Omega)} ds <\infty$.
		Then the error estimate holds for any $t_n>0$
		\begin{equation*}
			\begin{split}
				\left\|V^{n}-V(t_{n})\right\|_{L^2(\Omega)}
				&\leq \left\|J_{\upsilon} \right\|_{L^2(\Omega)} + c\tau^{k} \int_{0}^{t_{n}} (t_{n} - s)^{\gamma+\mu+1} \left\| f^{(k+1)}(s) \right\|_{L^2(\Omega)}ds\\
				& \quad + c \sum_{j=0}^{k}\left( \tau^{\mu + j + m+2} t^{\gamma-m-1}_{n}  + \tau^{k} t_{n}^{\gamma +\mu+j +1-k} \right) \left\| f^{(j)}(0) \right\|_{L^2(\Omega)} \\
			\end{split}
		\end{equation*}
		with $\left\|J_{\upsilon} \right\|_{L^2(\Omega)}$ in \eqref{initialesti}.
	\end{theorem}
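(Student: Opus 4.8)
The plan is to transcribe the proof of Theorem~\ref{theorema5.2} into the subdiffusion setting, replacing each diffusion-wave building block (Lemma~\ref{addLemma 3.6}, Lemma~\ref{lemma3.10}, Lemma~\ref{addLemma4.6}, Theorem~\ref{theorem4.4}) by its $\gamma\in(0,1)$ counterpart. The only genuinely new point is that the exponent $\gamma-2$ governing the diffusion-wave kernel estimate \eqref{3.0003} is $>-1$ only for $\gamma>1$; when $\gamma\in(0,1)$ one must therefore carry one more term in the Taylor expansion of $f$, which is exactly why the hypotheses here read $f\in C^{k}$ and $\int_{0}^{t}(t-s)^{\gamma+\mu+1}\|f^{(k+1)}(s)\|_{L^2(\Omega)}\,ds<\infty$ rather than $f\in C^{k-1}$ and the weight $(t-s)^{\gamma+\mu}$ against $f^{(k)}$.

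\emph{Decomposition of the source.} Expanding $f(t)=\sum_{j=0}^{k}\frac{t^{j}}{\Gamma(j+1)}f^{(j)}(0)+\frac{t^{k}}{\Gamma(k+1)}\ast f^{(k+1)}(t)$ and convolving with $t^{\mu}$, using $t^{\mu}\circledast\frac{t^{j}}{\Gamma(j+1)}=\frac{\Gamma(\mu+1)}{\Gamma(\mu+j+2)}t^{\mu+j+1}$ and $t^{\mu}\circledast\frac{t^{k}}{\Gamma(k+1)}=\frac{t^{k-1}}{\Gamma(k)}\ast\frac{t^{\mu+1}}{\mu+1}$ (both consequences of \eqref{Hadmint}--\eqref{smoothing1}), one obtains
\begin{equation*}
g(t)=\sum_{j=0}^{k}\frac{\Gamma(\mu+1)}{\Gamma(\mu+j+2)}\,t^{\mu+j+1}f^{(j)}(0)+\Big(\frac{t^{k-1}}{\Gamma(k)}\ast\frac{t^{\mu+1}}{\mu+1}\ast f^{(k+1)}\Big)(t).
\end{equation*}
By linearity of \eqref{rsubfee}--\eqref{sub2.3}, $V^{n}-V(t_{n})$ splits into the contributions of $\upsilon$, of the $k+1$ singular monomials, and of the smooth remainder. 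The $\upsilon$-part is controlled by the subdiffusion analogue of Lemma~\ref{addLemma 3.6} (the bound for $\mathcal{K}_{\upsilon}(z)$ together with \eqref{fractional resolvent estimate}), giving $\|J_{\upsilon}\|_{L^2(\Omega)}$ from \eqref{initialesti}. Each monomial $t^{\mu+j+1}q_{j}$, with $q_{j}=\frac{\Gamma(\mu+1)}{\Gamma(\mu+j+2)}f^{(j)}(0)$, is treated by Theorem~\ref{theorem6.1} with $\mu$ replaced by $\mu+j+1$ (the argument, via Lemmas~\ref{addLemma:LipCA}--\ref{addLemma 4.3}, only uses that the exponent is non-integer and exceeds $-2$), and summing $c\big(\tau^{\mu+j+m+2}t_{n}^{\gamma-m-1}+\tau^{k}t_{n}^{\gamma+\mu+j+1-k}\big)\|f^{(j)}(0)\|_{L^2(\Omega)}$ over $j=0,\dots,k$ reproduces the last sum in the statement.

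\emph{Smooth remainder.} Writing $g(t)=\big(\frac{t^{k-1}}{\Gamma(k)}\ast\frac{t^{\mu+1}}{\mu+1}\ast f^{(k+1)}\big)(t)$, the semigroup property of $J$ gives $G=J^{m}g=\frac{t^{k+m-1}}{\Gamma(k+m)}\ast h$ with $h(t)=\big(\frac{t^{\mu+1}}{\mu+1}\ast f^{(k+1)}\big)(t)$. Here I would use the subdiffusion version of Lemma~\ref{lemma3.10}: exactly as in \eqref{3.0002}, the kernel $\mathscr{E}(t)=\frac{1}{2\pi i}\int_{\Gamma_{\theta,\kappa}}e^{zt}(z^{\gamma}-A)^{-1}z^{m}\,dz$ and its discrete counterpart obey $\|\mathscr{E}(t)\|\le ct^{\gamma-m-1}$ and $\|\mathscr{E}^{n}_{\tau}\|\le c\tau t_{n}^{\gamma-m-1}$, and re-running the Taylor-at-$t_{n}$ argument of Lemma~\ref{lemma3.10} but convolving against $\frac{t^{k+m-1}}{\Gamma(k+m)}$ (one power higher than in \eqref{3.0003}) yields $\big\|\big((\mathscr{E}_{\tau}-\mathscr{E})\ast\frac{t^{k+m-1}}{\Gamma(k+m)}\big)(t)\big\|\le c\tau^{k}t^{\gamma-1}$, whose exponent $\gamma-1\in(-1,0)$ is now integrable. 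Convolving against $h$ and computing
\begin{equation*}
\int_{0}^{t_{n}}(t_{n}-s)^{\gamma-1}\Big(s^{\mu+1}\ast\|f^{(k+1)}(s)\|_{L^2(\Omega)}\Big)\,ds=c\int_{0}^{t_{n}}(t_{n}-w)^{\gamma+\mu+1}\|f^{(k+1)}(w)\|_{L^2(\Omega)}\,dw,
\end{equation*}
with the Beta constant $c=B(\gamma,\mu+2)$ finite precisely because $\gamma>0$ and $\mu>-2$, produces the remainder bound $c\tau^{k}\int_{0}^{t_{n}}(t_{n}-s)^{\gamma+\mu+1}\|f^{(k+1)}(s)\|_{L^2(\Omega)}\,ds$. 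Adding the three contributions finishes the proof.

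\emph{Expected main obstacle.} Everything outside one step is a line-by-line copy of the diffusion-wave bookkeeping, which is why only a sketch is warranted. The exception is the bound $\big\|(\mathscr{E}_{\tau}-\mathscr{E})\ast\frac{t^{k+m-1}}{\Gamma(k+m)}\big\|\le c\tau^{k}t^{\gamma-1}$: establishing it for $\gamma\in(0,1)$ requires re-deriving the subdiffusion analogues of Lemmas~\ref{Lemma 2.3}--\ref{Lemma 3.11} and \ref{lemma3.9}--\ref{lemma3.10}, and verifying that the extra power of $t$ in the smoothing kernel is exactly what turns the divergent Beta integral $B(\gamma-1,\mu+2)$ of the diffusion-wave computation into the convergent $B(\gamma,\mu+2)$.
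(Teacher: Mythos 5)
Your proposal is correct and follows essentially the same route as the paper, which itself disposes of Theorem~\ref{theorem6.2} in one line by invoking the arguments of Theorems~\ref{theorem4.4} and \ref{theorema5.2}: Taylor-expand $f$, bound each singular monomial $t^{\mu+j+1}f^{(j)}(0)$ via the $t^{\mu}q$ result with shifted exponent, and control the remainder through the convolution kernel estimate. Your identification of the one genuinely new point --- that $\gamma-2<-1$ for $\gamma\in(0,1)$ forces one extra Taylor term, hence the hypotheses $f\in C^{k}$ and the weight $(t-s)^{\gamma+\mu+1}$ against $f^{(k+1)}$ --- is exactly the reason the subdiffusion statement differs from Theorem~\ref{theorema5.2}, and your bookkeeping of the resulting bound $c\tau^{k}t^{\gamma-1}$ and the Beta integral $B(\gamma,\mu+2)$ is consistent with the paper's framework.
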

	
	\begin{theorem}\label{theorem6.3}
		Let $V(t_{n})$ and $V^{n}$ be the solutions of \eqref{rsubfee} and \eqref{sub2.3}, respectively.
		Let $\upsilon \in L^{2}(\Omega)$, $g(t)= t^{\mu} f(t)$ with $-2< \mu < -1$ and $f\in C^{k}([0,T]; L^{2}(\Omega))$, $\int_{0}^{t } (t -s)^{\gamma+\mu+1} \left\| f^{(k+1)}(s) \right\|_{L^2(\Omega)} ds <\infty$.
		Then the following error estimate holds for any $t_n>0$
		\begin{equation*}
			\begin{split}
				\left\|V^{n}-V(t_{n})\right\|_{L^2(\Omega)}
				& \leq \left\|J_{\upsilon}\right\|_{L^2(\Omega)} + c\tau^{k} \int_{0}^{t_{n}}(t_{n} -s)^{\gamma+\mu+1} \left\|f^{(k+1)}(s)\right\|_{L^2(\Omega)} ds \\
				& \quad + \sum_{j=0}^{k} \left( c\tau^{\mu + j + m + 1} t_{n}^{\gamma - m - 1} + c\tau^{k} t_{n}^{\gamma+ \mu + j -k} \right) \left\| f^{(j)}(0) \right\|_{L^2(\Omega)} \\
			\end{split}
		\end{equation*}
		with $\left\|J_{\upsilon} \right\|_{L^2(\Omega)}$ in \eqref{initialesti}.
	\end{theorem}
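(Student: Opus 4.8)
The plan is to carry over, in the subdiffusion regime $\gamma\in(0,1)$, the three-step strategy already used for the diffusion-wave case in Theorems~\ref{theorem4.4}, \ref{theorema5.2} and \ref{theorem5.5}. First I would recast \eqref{subfee} as \eqref{rsubfee}, write down the Laplace-domain representations of $V(t_n)$ and $V^n$ exactly as in Lemma~\ref{Lemma2.1} (now with no $b$-term), and subtract them, so that $V^n-V(t_n)$ splits into an initial-data contribution of the same shape as $I_1-I_2$ in the proof of Theorem~\ref{theorem3.9} plus a source contribution. All the technical ingredients remain available for $\gamma\in(0,1)$: the resolvent bound \eqref{fractional resolvent estimate} and the convolution-quadrature estimates of Lemma~\ref{Lemma 2.3} hold verbatim (the sector condition $\gamma\theta<\pi$ is now met trivially for $\theta$ close to $\pi/2$), as do Lemmas~\ref{Lemma nn3.3}--\ref{Lemma 3.4}, the kernel estimate of Lemma~\ref{addLemma 3.6}, and the polylogarithm expansion of Lemmas~\ref{addLemma:LipCA} and \ref{addLemma4.5}. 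Hence the subdiffusion counterpart of Lemma~\ref{addLemma 3.6} gives $\|I_1\|_{L^2(\Omega)}+\|I_2\|_{L^2(\Omega)}\le\|J_\upsilon\|_{L^2(\Omega)}$ with $\|J_\upsilon\|_{L^2(\Omega)}$ as in \eqref{initialesti}, which accounts for the first term of the statement.

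For the source term I would Taylor-expand $f$ at the origin, $f(t)=\sum_{j=0}^{k}\frac{t^j}{\Gamma(j+1)}f^{(j)}(0)+\frac{t^k}{\Gamma(k+1)}\ast f^{(k+1)}(t)$, so that $g(t)=t^\mu f(t)=\sum_{j=0}^{k}\frac{t^{\mu+j}}{\Gamma(j+1)}f^{(j)}(0)+h_k(t)$ with $h_k(t)=t^{\mu}\bigl(\frac{t^{k}}{\Gamma(k+1)}\ast f^{(k+1)}(t)\bigr)$. Each monomial $\frac{t^{\mu+j}}{\Gamma(j+1)}f^{(j)}(0)$ is a pure-power source $t^{\beta}q$ with $\beta=\mu+j$ and $q=f^{(j)}(0)/\Gamma(j+1)$: for $j=0$ it is genuinely hyper-singular and is treated exactly as in Theorem~\ref{theorem6.1} (the subdiffusion version of Theorem~\ref{theorem4.4}), while for $j\ge1$ the exponent satisfies $\mu+j>-1$ and the term is covered by the weakly-singular analysis of \cite{ShiHighorder2023}. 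In every case $\widetilde G(e^{-z\tau})$ is a constant multiple of $\tau^{\mu+j+m}Li_{-(\mu+j+m)}(e^{-z\tau})q$ (a finite combination of $\rho_l$'s if $\mu+j\in\mathbb{N}$), and a routine repetition of the estimates in the proof of Theorem~\ref{theorem4.4} bounds this contribution by $c\bigl(\tau^{\mu+j+m+1}t_n^{\gamma-m-1}+\tau^k t_n^{\gamma+\mu+j-k}\bigr)\|f^{(j)}(0)\|_{L^2(\Omega)}$, i.e.\ the $j$-th summand in the statement.

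The last piece is the error generated by the smoother tail $g=h_k$. By Lemma~\ref{lem5.3ad} one has $h_k^{(l)}(0)=0$ for $l\le k-2$ and $\|h_k^{(k-1)}(s)\|_{L^2(\Omega)}\le c\int_0^s(s-w)^{\mu+1}\|f^{(k+1)}(w)\|_{L^2(\Omega)}\,dw$, so $G=J^mh_k=\frac{t^{k+m-2}}{\Gamma(k+m-1)}\ast h_k^{(k-1)}(t)$ exactly as in the proof of Lemma~\ref{Lemma5.7}. Here I would invoke the subdiffusion counterpart of Lemma~\ref{lemma3.10}, which — since $\gamma\in(0,1)$ makes $\gamma-2<-1$ — must be stated with the gentler, locally integrable kernel weight $(t_n-s)^{\gamma-1}$, namely $\|V(t_n)-V^n\|_{L^2(\Omega)}\le c\tau^k\int_0^{t_n}(t_n-s)^{\gamma-1}\|h_k^{(k-1)}(s)\|_{L^2(\Omega)}\,ds$. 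Inserting the bound on $h_k^{(k-1)}$, interchanging the order of integration, and using the Beta integral $\int_w^{t_n}(t_n-s)^{\gamma-1}(s-w)^{\mu+1}\,ds=(t_n-w)^{\gamma+\mu+1}B(\gamma,\mu+2)$ — convergent because $\gamma>0$ and $\mu+2>0$ — produces exactly the weight $(t_n-s)^{\gamma+\mu+1}$ appearing in the statement. Collecting the three pieces by the triangle inequality completes the proof.

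The genuine obstacle is the subdiffusion version of Lemma~\ref{lemma3.10}. The diffusion-wave proof of that lemma delivers the kernel estimate $\|(\mathscr{E}_\tau-\mathscr{E})\ast\frac{t^{k+m-2}}{\Gamma(k+m-1)}\|\le c\tau^k t^{\gamma-2}$, which is harmless for $\gamma>1$ but ceases to be locally integrable against $\|h_k^{(k-1)}\|$ near $s=t_n$ once $\gamma<1$; one must therefore sharpen it to $c\tau^k t^{\gamma-1}$. This is the only point at which the passage from $\gamma\in(1,2)$ to $\gamma\in(0,1)$ is not a mere transcription: it requires re-running the Taylor-splitting argument around $t=t_n$ from the proof of Lemma~\ref{lemma3.10}, exploiting the identity $(z^\gamma-A)^{-1}z^{1-k}=z^{1-k-\gamma}\bigl(I+A(z^\gamma-A)^{-1}\bigr)$ together with $\|A(z^\gamma-A)^{-1}\|\le c$, so as to recover one extra power of $t$ in the composed kernel. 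Once this refined estimate is in hand, the remaining manipulations are precisely those of Sections~\ref{Se:GSF}--\ref{Se:SFC} and the conclusion follows as stated.
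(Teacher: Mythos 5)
Your overall strategy coincides with the paper's: the printed proof of Theorem~\ref{theorem6.3} is a one-sentence assertion that the arguments of Theorems~\ref{theorem4.4}, \ref{theorema5.2} and~\ref{theorem5.5} carry over to $\gamma\in(0,1)$, and your decomposition into the initial-data part, the monomials $t^{\mu+j}f^{(j)}(0)$, and the tail $h_k=t^{\mu}\bigl(\frac{t^{k}}{\Gamma(k+1)}\ast f^{(k+1)}\bigr)$ is exactly that transcription. You also correctly isolate the single point where the transcription is not routine: the weight $(t_n-s)^{\gamma-2}$ delivered by Lemma~\ref{lemma3.10} ceases to be locally integrable once $\gamma<1$.

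Your proposed repair of that step, however, has a gap. You claim the estimate $\bigl\|\bigl((\mathscr{E}_{\tau}-\mathscr{E})\ast\frac{t^{k+m-2}}{\Gamma(k+m-1)}\bigr)(t)\bigr\|\le c\tau^{k}t^{\gamma-1}$ can be recovered from the identity $(z^{\gamma}-A)^{-1}=z^{-\gamma}+z^{-\gamma}A(z^{\gamma}-A)^{-1}$. This cannot work: the factor $t^{\gamma-2}$ in \eqref{3.0003} does not originate in the resolvent but in the term $c\tau^{k}t_{n}^{\gamma+l-k-m}$ of Lemma~\ref{lemma3.9} with $l=k+m-2$ --- equivalently, in the contribution $c\tau^{k}|z|^{k+m-l-1-\gamma}$ of Lemma~\ref{Lemma 3.11}, which traces back to the intrinsic BDF$k$ consistency error $|\delta^{\gamma}_{\tau,k}(e^{-z\tau})-z^{\gamma}|\le c\tau^{k}|z|^{k+\gamma}$. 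Rewriting the resolvent leaves that term untouched, and the resulting contour integral $\tau^{k}\int_{\kappa}^{\pi/(\tau\sin\theta)}e^{rt\cos\theta}r^{1-\gamma}\,dr$ genuinely behaves like $\tau^{k}t^{\gamma-2}$ for $\gamma<1$. The extra power of $t$ must instead be bought with an extra power in the polynomial factor of the convolution: since $\|h_{k}^{(k-1)}(t)\|_{L^2(\Omega)}\le ct^{\mu+2}\to0$, one also has $h_{k}^{(k-1)}(0)=0$, hence $G=J^{m}h_{k}=\frac{t^{k+m-1}}{\Gamma(k+m)}\ast h_{k}^{(k)}$, and rerunning the proof of \eqref{3.0003} with exponent $k+m-1$ (still within the admissible range $l\le k+m-1$ of Lemma~\ref{Lemma 3.4}) yields $c\tau^{k}t^{\gamma-1}$; this is what the recalibrated hypotheses of Theorem~\ref{theorem6.3} (same $f^{(k+1)}$ as Theorem~\ref{theorem5.5}, but weight $\gamma+\mu+1$) are designed for. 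Be aware that this shifts the burden onto $h_{k}^{(k)}$, whose Leibniz bound is only $c\int_{0}^{s}(s-w)^{\mu}\|f^{(k+1)}(w)\|_{L^2(\Omega)}\,dw$, so the subsequent exchange of integration is no longer a convergent Beta integral for $\mu<-1$ and requires an additional splitting or finite-part argument. In short, the obstacle you identified is real, but the resolvent-identity fix does not close it.
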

	
	\section{Numerical results}\label{Se:numer}
	In this section, we illustrate the convergence analysis of the presented schemes on several examples.
	The numerical errors are measured by the discrete $L^2$-norm ($||\cdot||_{l_2}$) at the terminal time.
	The space direction is discretized by the spectral method with Chebyshev-Gauss-Lobatto points \cite{ShenSpectral2011}.
	Since the solution is unknown, the convergence order of the numerical experiments are computed by
	\begin{equation*}
		{\rm Convergence ~Order}=\frac{\ln \left(||u^{N/2}-u^{N}||_{l_2}/||u^{N}-u^{2N}||_{l_2}\right)}{\ln 2}.
	\end{equation*}
	
	For the sake of brevity and readability, we mainly focus on ID$m$-BDF$6$ method of  \eqref{2.3} and \eqref{sub2.3} for simulating the fractional diffusion-wave equation \eqref{fee} and subdiffusion equation \eqref{subfee}, respectively.
	For other cases, such as ID$m$-BDF$k$ with $2\leq m\leq k+1$, $1\leq k <6$, the numerical experiments can be similar performed.
	
	\subsection{Fractional diffusion-wave equation}
	Let $T=1$ and $\Omega=(-1,1)$.
	Let us consider the following two examples in \eqref{fee} or \eqref{rrfee}:
	\begin{description}
		\item[(a)] $g(x,t)= 0$;
		\item[(b)] $g(x,t)= (1+t^{\mu}) \circ e^te^x \left(1+\chi_{\left(0,1\right)}\left(x\right)\right),~-2<\mu<-1$
	\end{description}
	with the initial values $\upsilon(x)=\sin(x)\sqrt{1-x^2}$ and $b(x)=\cos(x)\sqrt{1-x^2}$.
	
	Here $J^1g(x,t)=1\circledast g(t)$ is calculated by the Hadamard finite-part integrals, e.g.,
	\begin{equation*}
		J^1 (t^\mu e^t)\!=\! \oint_0^t s^\mu e^sds
		\!=\!\frac{1}{\mu+1}\left[ t^{\mu+1} e^t\!-\!\int_0^ts^{\mu+1} e^sds\right]
		\!=\!\frac{1}{\mu+1}\left[ t^{\mu+1} e^t-1\ast (t^{\mu+1} e^t)\right].
	\end{equation*}
	Note that $J$ may map a hyper-singular function $t^\mu e^t$ to a weakly singular function $t^{\mu+1} e^t$, $-2<\mu<-1$.
	Repeating the $m$-fold integral operator for $g(t)$, one has
	\begin{equation}
		G(t)=J^{m}g(t)=\frac{t^{m-2}}{\Gamma(m-1)}\ast(1\circledast g(t)),~m\geq 2,
	\end{equation}
	which are computed by JacobiGL Algorithm \cite{ChenHigh2015,HesthavenNodal2008}.
	
\begin{table}[htbp]
	{\footnotesize \begin{center}
			\caption{Case {\bf(a)}: convergent order of ID$m$-BDF$6$.}
			\begin{tabular}{|l|l| l l l l l l|} \hline
				$\gamma$ &$m$ &  $N=200$    & $N=400$     & $N=800$     &  $N=1600$    & $N=3200$     & Rate        \\ \hline
				\multirow{6}{*}{$1.3$}% $\gamma=1.4$\\$\beta-1.7$
				& 2    & 3.1057e-07  & 1.5956e-12  & 1.0160e-13  & 6.3778e-15  & 3.9903e-16  & 3.9984     \\
				& 3    & 1.8491e-08  & 5.6217e-13  & 3.4207e-14  & 2.1250e-15  & 1.3262e-16  & 4.0020     \\
				& 4    & 4.3056e-08  & 1.5234e-14  & 2.3547e-16  & 3.6718e-18  & 5.7362e-20  & 6.0002     \\
				& 5    & 2.7576e-08  & 2.7285e-14  & 4.3975e-16  & 6.9764e-18  & 1.0983e-19  & 5.9891     \\
				& 6    & 8.2645e-09  & 4.5791e-14  & 7.3039e-16  & 1.1526e-17  & 1.8097e-19  & 5.9929     \\
				& 7    & 1.9552e-11  & 6.5498e-14  & 1.0372e-15  & 1.6312e-17  & 2.5567e-19  & 5.9954     \\ \hline
				\multirow{6}{*}{$1.7$}
				& 2    & 6.4264e-10  & 3.4917e-11  & 2.1408e-12  & 1.3435e-13  & 8.4136e-15  & 3.9971     \\
				& 3    & 4.8966e-10  & 1.7329e-11  & 7.8237e-13  & 4.5680e-14  & 2.8177e-15  & 4.0189     \\
				& 4    & 4.6720e-10  & 1.1633e-11  & 2.0924e-13  & 3.4568e-15  & 5.5368e-17  & 5.9642     \\
				& 5    & 5.2989e-10  & 1.2468e-11  & 2.2141e-13  & 3.6406e-15  & 5.8192e-17  & 5.9672     \\
				& 6    & 5.9812e-10  & 1.3371e-11  & 2.3454e-13  & 3.8386e-15  & 6.1232e-17  & 5.9701     \\
				& 7    & 6.7027e-10  & 1.4331e-11  & 2.4850e-13  & 4.0491e-15  & 6.4463e-17  & 5.9729     \\ \hline
			\end{tabular}\label{table:11}
	\end{center}}
\end{table}
	
\begin{table}[htbp]
	{\footnotesize \begin{center}
			\caption{Case {\bf(b)} with convolution: convergent order of ID$m$-BDF$6$.}
			\begin{tabular}{|l|l| l l l l l l|} \hline
				$(\gamma,\mu)$ &$m$ &  $N=200$    & $N=400$     & $N=800$     &  $N=1600$    & $N=3200$     & Rate        \\ \hline
				\multirow{6}{*}{$(1.3,-1.8)$}% $\gamma=1.4$\\$\beta-1.7$
				& 2    & 1.0201e-05  & 9.9349e-07  & 2.1620e-07  & 4.7054e-08  & 1.0240e-08  & 2.2000     \\
				& 3    & 1.1752e-05  & 4.5059e-10  & 4.8207e-11  & 5.1946e-12  & 5.6237e-13  & 3.2074     \\
				& 4    & 6.2468e-06  & 1.1298e-11  & 4.8878e-13  & 2.4804e-14  & 1.3228e-15  & 4.2288     \\
				& 5    & 7.0941e-07  & 5.1965e-12  & 7.8855e-14  & 1.2157e-15  & 1.8910e-17  & 6.0064     \\
				& 6    & 1.4520e-06  & 6.6647e-12  & 1.0078e-13  & 1.5492e-15  & 2.4008e-17  & 6.0118     \\
				& 7    & 1.1401e-06  & 8.2014e-12  & 1.2374e-13  & 1.8999e-15  & 2.9426e-17  & 6.0126     \\ \hline
				\multirow{6}{*}{$(1.7,-1.2)$}
				& 2    & 7.5836e-08  & 1.0904e-08  & 1.5662e-09  & 2.2492e-10  & 3.2298e-11  & 2.7999     \\
				& 3    & 3.6636e-09  & 1.4753e-10  & 9.1845e-12  & 6.3756e-13  & 4.5227e-14  & 3.8173     \\
				& 4    & 4.0340e-09  & 6.6212e-11  & 1.0526e-12  & 1.6515e-14  & 2.5691e-16  & 6.0063     \\
				& 5    & 5.5435e-09  & 8.9675e-11  & 1.4177e-12  & 2.2247e-14  & 3.4822e-16  & 5.9974     \\
				& 6    & 7.1565e-09  & 1.1455e-10  & 1.8021e-12  & 2.8214e-14  & 4.4114e-16  & 5.9990     \\
				& 7    & 8.8845e-09  & 1.4105e-10  & 2.2105e-12  & 3.4546e-14  & 5.3967e-16  & 6.0003     \\ \hline
			\end{tabular}\label{table:22}
	\end{center}}
\end{table}

\begin{table}[htbp]
	{\footnotesize \begin{center}
			\caption{Case {\bf(b)} with product: convergent order of ID$m$-BDF$6$.}
			\begin{tabular}{|l|l| l l l l l l|} \hline
				$(\gamma,\mu)$ &$m$ &  $N=200$    & $N=400$     & $N=800$     &  $N=1600$    & $N=3200$     & Rate        \\ \hline
				\multirow{6}{*}{$(1.3,-1.8)$}% $\gamma=1.4$\\$\beta-1.7$
				& 2    & 7.0394e-03  & 2.1947e-03  & 9.5571e-04  & 4.1608e-04  & 1.8113e-04  & 1.1998     \\
				& 3    & 2.7013e-04  & 1.8781e-06  & 4.0928e-07  & 8.9133e-08  & 1.9405e-08  & 2.1995     \\
				& 4    & 4.3584e-04  & 2.3163e-09  & 2.5035e-10  & 2.7125e-11  & 2.9450e-12  & 3.2033     \\
				& 5    & 3.0476e-04  & 2.6931e-11  & 9.6262e-13  & 4.5555e-14  & 2.3799e-15  & 4.2586     \\
				& 6    & 9.6121e-05  & 1.5891e-11  & 2.4273e-13  & 3.7507e-15  & 5.8309e-17  & 6.0072     \\
				& 7    & 5.3468e-06  & 1.7160e-11  & 2.6162e-13  & 4.0374e-15  & 6.2691e-17  & 6.0090     \\ \hline
				\multirow{6}{*}{$(1.7,-1.2)$}
				&2    & 3.6915e-04  & 1.0602e-04  & 3.0447e-05  & 8.7440e-06  & 2.5110e-06  & 1.7999     \\
				&3    & 3.3436e-07  & 4.9357e-08  & 7.1436e-09  & 1.0292e-09  & 1.4802e-10  & 2.7976     \\
				&4    & 1.9015e-08  & 5.2473e-10  & 3.0702e-11  & 2.1358e-12  & 1.5242e-13  & 3.8085     \\
				&5    & 1.6872e-08  & 2.4119e-10  & 3.7724e-12  & 5.9373e-14  & 9.2660e-16  & 6.0017     \\
				&6    & 1.6808e-08  & 2.4345e-10  & 3.8372e-12  & 6.0772e-14  & 9.5765e-16  & 5.9877     \\
				&7    & 1.6852e-08  & 2.4724e-10  & 3.9177e-12  & 6.2163e-14  & 9.8029e-16  & 5.9867     \\ \hline
			\end{tabular}\label{table:33}
	\end{center}}
\end{table}

	The numerical results using the scheme \eqref{2.3} are presented in Table \ref{table:11}, which indicates that ID$m$-BDF$k$ with $k=6$ recovers the high-order convergence, which is in agreement with Theorem \ref{theorem3.9}.
	In Table \ref{table:11}, we also observe the Newton-Cotes rule that appear in \cite{ShiHighorder2023}, which have an optimal convergence rate, i.e., $\mathcal{O}\left(\tau^{\min\{m+1,k\}}\right)$ for odd $m$ and $\mathcal{O}\left(\tau^{\min\{m+2,k\}}\right)$ for even $m$. Tables \ref{table:22} and \ref{table:33} show that ID$m$-BDF$6$ scheme can restore higher-order convergence, which is consistent with Theorems \ref{theorema5.2} and \ref{theorem5.5}, respectively.
	
	\subsection{Subdiffusion}
	Let $T=1$ and $\Omega=(-1,1)$.
	Let us consider the following two examples in \eqref{subfee} or \eqref{rsubfee}:
	\begin{description}
		\item[(c)] $\upsilon(x)=\sin(x)\sqrt{1-x^2}$ and $g(x,t)= t^{\mu} e^x \left(1+\chi_{\left(0,1\right)}\left(x\right)\right)$.
		\item[(d)] $\upsilon(x)=\sin(x)\sqrt{1-x^2}$ and $g(x,t)= (1+t^{\mu}) \circ e^t e^x \left(1+\chi_{\left(0,1\right)}\left(x\right)\right)$.
	\end{description}
	
\begin{table}[htbp]
	{\footnotesize \begin{center}
			\caption{Case {\bf(c)}: convergent order of ID$m$-BDF$6$.}
			\begin{tabular}{|l|l| l l l l l l|} \hline
				$(\gamma,\mu)$ &$m$ &  $N=200$    & $N=400$     & $N=800$     &  $N=1600$    & $N=3200$     & Rate        \\ \hline
				\multirow{6}{*}{$(0.3,-1.8)$}% $\gamma=1.4$\\$\beta-1.7$
				& 2    & 5.0725e-04  & 2.2042e-04  & 9.5865e-05  & 4.1710e-05  & 1.8151e-05  & 1.2003     \\
				& 3    & 1.4190e-06  & 3.0915e-07  & 6.7319e-08  & 1.4655e-08  & 3.1899e-09  & 2.1998     \\
				& 4    & 1.4492e-09  & 1.4972e-10  & 1.6063e-11  & 1.7384e-12  & 1.8870e-13  & 3.2036     \\
				& 5    & 5.3894e-11  & 2.1579e-13  & 3.2583e-14  & 2.3768e-15  & 1.3870e-16  & 4.0989     \\
				& 6    & 8.3905e-11  & 1.2443e-12  & 1.8941e-14  & 2.9196e-16  & 4.5270e-18  & 6.0111     \\
				& 7    & 9.8088e-11  & 1.4550e-12  & 2.2157e-14  & 3.4180e-16  & 5.3065e-18  & 6.0092     \\ \hline
				\multirow{6}{*}{$(0.7,-1.2)$}
				& 2    & 3.3359e-05  & 9.5745e-06  & 2.7487e-06  & 7.8925e-07  & 2.2663e-07  & 1.8001     \\
				& 3    & 2.1419e-08  & 3.1187e-09  & 4.5080e-10  & 6.4944e-11  & 9.3404e-12  & 2.7976     \\
				& 4    & 1.9006e-10  & 1.4558e-11  & 1.0580e-12  & 7.6129e-14  & 5.4673e-15  & 3.7995     \\
				& 5    & 2.1496e-11  & 3.2580e-13  & 5.1577e-15  & 8.6210e-17  & 1.5742e-18  & 5.7751     \\
				& 6    & 2.5590e-11  & 3.8163e-13  & 5.8267e-15  & 9.0001e-17  & 1.3982e-18  & 6.0082     \\
				& 7    & 2.9909e-11  & 4.4596e-13  & 6.8080e-15  & 1.0515e-16  & 1.6335e-18  & 6.0083     \\ \hline
			\end{tabular}\label{table:44}
	\end{center}}
\end{table}
\begin{table}[htbp]
	{\footnotesize \begin{center}
			\caption{Case {\bf(c)} with convolution: convergent order of ID$m$-BDF$6$.}
			\begin{tabular}{|l|l| l l l l l l|} \hline
				$(\gamma,\mu)$ &$m$ &  $N=200$    & $N=400$     & $N=800$     &  $N=1600$    & $N=3200$     & Rate        \\ \hline
				\multirow{6}{*}{$(0.3,-1.8)$}% $\gamma=1.4$\\$\beta-1.7$
				& 2    & 4.5639e-07  & 9.9423e-08  & 2.1648e-08  & 4.7126e-09  & 1.0257e-09  & 2.1998     \\
				& 3    & 6.8806e-10  & 7.2685e-11  & 7.8306e-12  & 8.4841e-13  & 9.2131e-14  & 3.2030     \\
				& 4    & 1.3338e-11  & 1.9377e-13  & 2.3273e-14  & 1.4656e-15  & 8.2840e-17  & 4.1450     \\
				& 5    & 3.8346e-11  & 5.7240e-13  & 8.7397e-15  & 1.3489e-16  & 2.0921e-18  & 6.0107     \\
				& 6    & 5.8079e-11  & 8.6726e-13  & 1.3249e-14  & 2.0471e-16  & 3.1809e-18  & 6.0080     \\
				& 7    & 8.1817e-11  & 1.2217e-12  & 1.8664e-14  & 2.8839e-16  & 4.4809e-18  & 6.0080     \\ \hline
				\multirow{6}{*}{$(0.7,-1.2)$}
				& 2    & 6.6843e-09  & 9.7230e-10  & 1.4050e-10  & 2.0237e-11  & 2.9104e-12  & 2.7977     \\
				& 3    & 1.0016e-10  & 7.3535e-12  & 5.3001e-13  & 3.8070e-14  & 2.7330e-15  & 3.8000     \\
				& 4    & 6.6723e-12  & 1.0297e-13  & 1.6883e-15  & 3.0182e-17  & 6.1603e-19  & 5.6145     \\
				& 5    & 1.1038e-11  & 1.6503e-13  & 2.5226e-15  & 3.8988e-17  & 6.0589e-19  & 6.0078     \\
				& 6    & 1.6689e-11  & 2.4948e-13  & 3.8132e-15  & 5.8930e-17  & 9.1574e-19  & 6.0079     \\
				& 7    & 2.3486e-11  & 3.5106e-13  & 5.3656e-15  & 8.2920e-17  & 1.2885e-18  & 6.0079     \\ \hline
			\end{tabular}\label{table:55}
	\end{center}}
\end{table}
\begin{table}[htbp]
	{\footnotesize \begin{center}
			\caption{Case {\bf(c)} with product: convergent order of ID$m$-BDF$6$.}
			\begin{tabular}{|l|l| l l l l l l|} \hline
				$(\gamma,\mu)$ &$m$ &  $N=200$    & $N=400$     & $N=800$     &  $N=1600$    & $N=3200$     & Rate        \\ \hline
				\multirow{6}{*}{$(0.3,-1.8)$}% $\gamma=1.4$\\$\beta-1.7$
				& 2    & 5.0689e-04  & 2.2034e-04  & 9.5847e-05  & 4.1706e-05  & 1.8150e-05  & 1.2002     \\
				& 3    & 1.4193e-06  & 3.0918e-07  & 6.7322e-08  & 1.4655e-08  & 3.1899e-09  & 2.1998     \\
				& 4    & 1.4529e-09  & 1.4968e-10  & 1.6057e-11  & 1.7380e-12  & 1.8868e-13  & 3.2034     \\
				& 5    & 6.1820e-11  & 3.1557e-13  & 3.0827e-14  & 2.3493e-15  & 1.3827e-16  & 4.0866     \\
				& 6    & 9.3452e-11  & 1.3869e-12  & 2.1119e-14  & 3.2563e-16  & 5.0501e-18  & 6.0107     \\
				& 7    & 1.0924e-10  & 1.6217e-12  & 2.4703e-14  & 3.8114e-16  & 5.9179e-18  & 6.0091     \\ \hline
				\multirow{6}{*}{$(0.7,-1.2)$}
				& 2    & 3.3358e-05  & 9.5743e-06  & 2.7487e-06  & 7.8925e-07  & 2.2663e-07  & 1.8001     \\
				& 3    & 2.1431e-08  & 3.1196e-09  & 4.5087e-10  & 6.4948e-11  & 9.3408e-12  & 2.6985     \\
				& 4    & 1.8919e-10  & 1.4545e-11  & 1.0578e-12  & 7.6126e-14  & 5.4672e-15  & 3.7995     \\
				& 5    & 2.2575e-11  & 3.4198e-13  & 5.4055e-15  & 9.0047e-17  & 1.6339e-18  & 5.7842     \\
				& 6    & 2.6889e-11  & 4.0111e-13  & 6.1248e-15  & 9.4611e-17  & 1.4699e-18  & 6.0082     \\
				& 7    & 3.1428e-11  & 4.6873e-13  & 7.1566e-15  & 1.1054e-16  & 1.7173e-18  & 6.0082     \\ \hline
			\end{tabular}\label{table:66}
	\end{center}}
\end{table}

	Tables \ref{table:44}, \ref{table:55} and \ref{table:66} show that ID$m$-BDF6 recovers the high-order convergence for the fractional subdiffusion model with the hyper-singular terms, which is in agreement with Theorems \ref{theorem6.1}, \ref{theorem6.2} and \ref{theorem6.3}, respectively.

	%\section{Conclusions}
	%We consider the certainly hyper-singular source function for the time-fractional evolution equations, which arise in many areas of the applied sciences \cite{HeymansPhysical2006,KilbasTheory2006,MainardiFractional2022,PovstenkoFractional2015}.
	%We note that the numerical algorithm is designed for the subdiffusion equation with the simple case  $g(t)=t^{\mu}$, $-2<\mu<-1$.
	%However, the convergence analysis remain to be proved \cite{ChenModified2022}.
	%Based on the framework of convolution quadrature \cite{CuestaConvolution2006,JinCorrection2017,Yananalysis2018} and ID$m$-BDF$k$ method \cite{ShiHighorder2023},
	%
	%Anomalous diffusion models can involve the hyper-singular source term, which exhibit a severe order reduction by conventional time stepping methods.
	%In this work we obtain an optimal error estimate, i.e., $\mathcal{O}\left(\tau^{\min\{m+1,k\}}\right)$ for odd $m$ and $\mathcal{O}\left(\tau^{\min\{m+2,k\}}\right)$ for even $m$, $2\leq m\leq k+1\leq 7$, under the mild regularity for the source function.
	%Then the desired $k$th-order convergence rate are well developed by smoothing method under the certainly hyper-singular source terms.
	%It is interesting to design the numerical algorithms for the nonlinear fractional models.

%	\section*{Acknowledgments}
%	We would like to acknowledge the assistance of volunteers in putting together this example manuscript and supplement.
	
	%\bibliographystyle{siamplain_nodoi}
	%%\bibliographystyle{siamplain}
	%\bibliography{references}

\end{document}